\newcommand{\Q}{\ensuremath{\mathbb{Q}}}
\newcommand{\Z}{\ensuremath{\mathbb{Z}}}
\newcommand{\F}{\ensuremath{\mathbb{F}}}
\newcommand{\Fr}{\ensuremath{\mathbf{F}}}
\newcommand{\ka}{\ensuremath{\Bbbk}}
\newcommand{\kka}{\ensuremath{\overline{\Bbbk}}}
\newcommand{\XX}{{\ensuremath{\overline{X}}}}
\newcommand{\ii}{\ensuremath{\mathrm{i}}}
\newcommand{\Pro}{\ensuremath{\mathbb{P}}}
\newcommand{\Aut}{\ensuremath{\operatorname{Aut}}}
\newcommand{\Gal}{\ensuremath{\operatorname{Gal}}}
\newcommand{\Pic}{\ensuremath{\operatorname{Pic}}}
\newtheorem{theorem}[equation]{Theorem}
\newtheorem{proposition}[equation]{Proposition}
\newtheorem{lemma}[equation]{Lemma}
\newtheorem{corollary}[equation]{Corollary}
\theoremstyle{definition}
\newtheorem{definition}[equation]{Definition}
\theoremstyle{remark}
\newtheorem{remark}[equation]{Remark}
\newtheorem*{notation}{Notation}
\title{Del Pezzo surfaces over finite fields}
\thanks{The research was carried out at the IITP RAS at the expense of the Russian Foundation for Sciences (project $N^{\underline{\circ}}$ 14-50-00150).}
\author{Andrey Trepalin}
\address{\emph{Andrey Trepalin}
\newline
\textnormal{Institute for Information Transmission Problems, 19 Bolshoy Karetnyi side-str., Moscow 127994, Russia}
\newline
\textnormal{\texttt{trepalin@mccme.ru}}}
\begin{document}

\begin{abstract}
Let $X$ be a del Pezzo surface of degree $2$ or greater over a finite field $\F_q$. The image $\Gamma$ of the Galois group $\Gal(\overline{\F}_q / \F_q)$ in the group $\Aut(\Pic(\XX))$ is a cyclic subgroup preserving the anticanonical class and the intersection form. The conjugacy class of $\Gamma$ in the subgroup of $\Aut(\Pic(\XX))$ preserving the anticanonical class and the intersection form is a natural invariant of $X$. We say that the conjugacy class of $\Gamma$ in $\Aut(\Pic(\XX))$ is the \textit{type} of a del Pezzo surface. In this paper we study which types of del Pezzo surfaces of degree $2$ or greater can be realized for given $q$. We collect known results about this problem and fill the gaps.
\end{abstract}

\maketitle
\section{Introduction}

A \textit{del Pezzo surface} is a smooth projective surface such that its anticanonical class is ample. Let $X$ be a del Pezzo surface over finite field $\F_q$, and $\XX = X \otimes \overline{\F}_q$. The Galois group~$\Gal\left(\overline{\F}_q / \F_q\right)$ acts on the lattice $\Pic(\XX)$ and preserves the anticanonical class~$-K_X$ and the intersection form. The image $\Gamma$ of $\Gal\left(\overline{\F}_q / \F_q\right)$ is a cyclic subgroup in~$\Aut\left(\Pic(\XX)\right)$ preserving the intersection form. Obviously, the set of elements in $\Aut\left(\Pic(\XX)\right)$ preserving the intersection form is a group. We denote this group by $W$.

The group $\Gamma$ is a cyclic subgroup of $W$. Therefore the conjugacy class of $\Gamma$ in $W$ is a natural invariant of a del Pezzo surface. We say that the conjugacy class of $\Gamma$ in $W$ is the \textit{type} of a del Pezzo surface. It is well-known (see, for example, \cite[Theorem IV.1.1]{Man74}) that the type of a del Pezzo surface defines the number $N_1$ of $\F_q$-points on $X$. One has
\begin{equation}
\label{Fpoints}
N_1 = q^2 + aq + 1,
\end{equation}
\noindent where $a$ is the trace of a generator $g$ of $\Gamma$ considered as an element on $\mathrm{GL}(\Pic(\XX))$.

More generally, let $N_k$ be the number of $\F_{q^k}$ points of $X_k = X \otimes \F_{q^k}$. The zeta function of $X$ is the formal power series
$$
Z_X(t)=\exp\left(\sum_{k=1}^\infty \frac{N_kt^k}{k}\right).
$$
For a geometrically rational surface $X$ one has (see~\cite[Corollary 2 from Theorem~IV.5.1]{Man74})
$$
Z_X(t)=\frac{1}{(1-t)(1-q^2t)\det(1-qtg|\Pic(\XX)\otimes\Q)}.
$$
\noindent Therefore the zeta function of a del Pezzo surface $X$ is totally defined by the type of $X$. Thus the type of $X$ defines the numbers $N_k$, since these numbers are uniquely determined by a given zeta function.

The study of algebraic varieties over finite fields has numerous applications in coding theory (see, for example, \cite{VNT18}).

The natural question is which types of del Pezzo surfaces can be realized for given $q$. In \cite[Theorem~1.7]{BFL16} it is shown that a del Pezzo surface of any type exists for any sufficiently big $q$. But there is no answer to the question for small $q$. The aim of this paper is to construct each type of del Pezzo surfaces of degree $2$ or greater over $\F_q$ if it is possible, and show that such surfaces do not exist for other values of $q$. 

The \textit{degree} of a del Pezzo surface $X$ is the number $d = K_X^2$. One has $1 \leqslant d \leqslant 9$. For each $d$ we give the isomorphism class of $W$ in the following table, where $W(E_6)$, $W(E_7)$ and $W(E_8)$ are the Weyl groups of the corresponding root systems $E_6$, $E_7$ and $E_8$.

\begin{center}
\begin{tabular}{|c|c|c|c|c|c|c|c|c|c|}
\hline
Degree & $9$ & $8$ & $7$ & $6$ & $5$ & $4$ & $3$ & $2$ & $1$ \\
\hline
Group & $\langle \mathrm{id} \rangle$ & $\langle \mathrm{id} \rangle$ or $\Z / 2\Z$ & $\Z / 2\Z$ & $D_6$ & $S_5$ & $\left(\Z / 2\Z\right)^4 \rtimes S_5$ & $W(E_6)$ & $W(E_7)$ & $W(E_8)$ \\
\hline
\end{tabular}
\end{center}

One can easily find conjugacy classes of elements in the dihedral group $D_6$ of order $12$ and the symmertic group $S_5$. For the group $\left(\Z / 2\Z\right)^4 \rtimes S_5$ it is more difficult, and we give a proof of the classification in Proposition \ref{nomistakeDP4}, since in some sources there are missed cases in the classification of finite subgroups of $\left(\Z / 2\Z\right)^4 \rtimes S_5$. The classification of conjugacy classes of the elements in the groups $W(E_6)$, $W(E_7)$ and $W(E_8)$ is obtained in \cite{Fr51} and~\cite{Fr67}. 

A surface $S$ is called \textit{minimal} if any birational morphism $S \rightarrow S'$ is an isomorphism. The type of a del Pezzo surface $X$ allows to determine whether the surface is minimal or not. If $X$ is not a minimal surface then it is isomorphic to the blowup of a surface~$Y$ at some points of certain degrees. Such points of the blowup should satisfy some conditions that are called \textit{being in general position} (see Theorem \ref{GenPos}, Proposition \ref{DP8GenPos} and Corollary~\ref{dPblowup}). An important case of nonminimal del Pezzo surfaces is the blowup of the projective plane at some points of certain degrees in general position. Surfaces of these types have many interesting properties and are studied in many papers. For example, the blowups of $\Pro^2_{\F_q}$ at six $\F_q$-points or four $\F_q$-points and a point of degree two are considered in \cite{SD10}, and the blowup of $\Pro^2_{\F_q}$ at seven $\F_q$-points is considered in \cite[Chapter 4, Sections 2 and~3]{Kap13} and \cite{KR16} for odd $q$. Many types (including most complicated) of nonminimal del Pezzo surfaces are considered in \cite{BFL16}, but the aim of \cite{BFL16} is to construct del Pezzo surfaces with a given number of $\F_q$-points. Some types of nonminimal del Pezzo surfaces are not considered in \cite{BFL16} because del Pezzo surfaces of different types can have the same number of $\F_q$-points.

The types of minimal del Pezzo surfaces of degrees $4$, $3$ and $2$ are considered in the papers of S.\,Rybakov and the author. Minimal del Pezzo surfaces of degree $4$ are constructed in \cite[Theorem~3.2]{Ry05}. These surfaces either admit a structure of a conic bundle with four degenerate fibres, or are isomorphic to the contracting of a $(-1)$-curve on a del Pezzo surface of degree $3$ admitting a structure of a conic bundle with five degenerate fibres. Any del Pezzo surface of degree $3$ is a cubic surface in $\Pro^3_{\F_q}$. There are five types of minimal cubic surfaces. One type is constructed in \cite{SD10} and the other four types are constructed in \cite[Theorem~1.2]{RT17}, but there are some restrictions on $q$ for three types. Minimal del Pezzo surfaces of degree $2$ are constructed in \cite{Tr17}. For four types there are some restrictions on $q$ (three of them are inherited from the restrictions on $q$ for the case of cubic surfaces).

We give the classification of the types of del Pezzo surfaces of degree $5$ or greater, $4$, $3$ and $2$ in Tables \ref{table1}, \ref{table4}, \ref{table3} and \ref{table2} respectively.

The main results of this paper are the following three theorems.

\begin{theorem}
\label{DP2}

In the notation of Table \ref{table2} the following holds.

\begin{enumerate}
\item Del Pezzo surfaces of degree $2$ of types $1$ and $49$ do not exist over $\F_2$, $\F_3$, $\F_4$, $\F_5$, $\F_7$, $\F_8$, and exist over other finite fields.

\item Del Pezzo surfaces of degree $2$ of types $5$ and $10$ do not exist over $\F_2$, $\F_3$, $\F_4$, $\F_5$, and exist over other finite fields.

\item Del Pezzo surfaces of degree $2$ of types $2$, $3$, $18$ and $31$ do not exist over $\F_2$, $\F_3$, $\F_4$, and exist over other finite fields.

\item Del Pezzo surfaces of degree $2$ of types $4$, $6$--$9$, $12$--$14$, $17$, $21$, $22$, $25$, $32$, $33$, $38$, $40$, $50$, $53$, $55$, $60$ do not exist over $\F_2$, and exist over other finite fields.

\item Del Pezzo surfaces of degree $2$ of types $11$, $15$, $16$, $19$, $20$, $23$, $24$, $26$, $27$, $29$, $30$, $34$, $36$, $37$, $39$, $41$--$46$, $48$, $51$, $52$, $54$, $57$--$59$ exist over all finite fields.

\item Del Pezzo surfaces of degree $2$ of types $28$ and $35$ do not exist over $\F_2$, and exist over any~$\F_q$ where $q \geqslant 4$.

\item Del Pezzo surfaces of degree $2$ of types $47$ and $56$ exist over any $\F_q$ where $q = 6k + 1$.

\end{enumerate}

\end{theorem}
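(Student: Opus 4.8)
The plan is to treat the existence of a degree-$2$ del Pezzo surface of a prescribed type as a two-sided problem: constructing a surface whose Frobenius realizes the given conjugacy class $[g]$ in $W(E_7)$, and bounding $q$ from below by combinatorial obstructions. The starting point is to read off, from each of the $60$ conjugacy classes, the induced Galois action on the $56$ exceptional curves and on $\Pic(\XX)$. The multiplicity of the eigenvalue $1$ in the characteristic polynomial of $g$ equals $\operatorname{rk}\Pic(X)$, so types with $\operatorname{rk}\Pic(X)=1$, together with those giving a conic bundle with $\operatorname{rk}\Pic(X)=2$ and no $g$-invariant family of disjoint exceptional curves, are \emph{minimal}, while the remaining types admit a $g$-invariant family of pairwise disjoint exceptional curves. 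By Corollary \ref{dPblowup} such an $X$ is the blow-up of a del Pezzo surface $Y$ of higher degree at a Galois orbit of closed points of prescribed degrees in general position (in the sense of Theorem \ref{GenPos} and Proposition \ref{DP8GenPos}); iterating, every nonminimal type reduces to $\Pro^2$, $\Pro^1\times\Pro^1$, or a minimal surface of lower degree.

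This reduction splits the proof into three tasks. First, for the minimal types I would invoke the existing classifications and their $q$-restrictions: the degree-$4$ case from \cite{Ry05}, the degree-$3$ case from \cite{RT17}, and the degree-$2$ case from \cite{Tr17}. Second, for the nonminimal types, existence over $\F_q$ becomes existence of $Y$ over $\F_q$ together with a Galois orbit of closed points in general position of the required degrees. For large $q$ this is guaranteed by \cite[Theorem 1.7]{BFL16}, so the content is concentrated at small $q$. Here the zeta-function data attached to the type, equivalently the factorization of $\det(1-tg)$ into cyclotomic factors, fixes every $N_k$ and hence the number of closed points of each degree $\leqslant 7$ that must be available on $Y$; I would use these counts either to exhibit an explicit configuration or to derive a contradiction.

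The non-existence direction is where the real work lies. For a type reducing to $7$ points on $\Pro^2$ (type $1$, the split surface, among others), general position forces that no three of the points are collinear, so they form a plane arc; since an arc over $\F_q$ has at most $q+1$ points for odd $q$ and $q+2$ for even $q$, and the further conditions (no six on a conic, the cubic condition) eliminate the extremal arcs lying on a conic, one rules out $\F_2,\F_3,\F_4,\F_5$ outright and then checks $\F_7,\F_8$ by a finer count of arcs avoiding conics, yielding the threshold $q\geqslant 9$ of part (1). The intermediate thresholds in parts (2)--(4) and (6) arise the same way, from the exact maximal size of a configuration in general position of the relevant mixed degrees on $\Pro^2$ or on a del Pezzo surface of degree $3$--$8$ over the given small field, with the counts $N_1=q^2+aq+1$ and the higher $N_k$ serving as the book-keeping that records how many points of each degree are forced into general position.

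The main obstacle is therefore the case-by-case verification at the exact thresholds: one must show not merely that a naive configuration fails but that \emph{no} configuration of the required combinatorial type exists over the borderline field (for example that type $1$ is impossible over $\F_7$ and $\F_8$ yet realizable over $\F_9$), which demands an exhaustive analysis of arcs and of points in general position on small del Pezzo surfaces. The second genuinely different obstacle is part (7): types $47$ and $56$ exist only when $q=6k+1$, a congruence I expect to originate from a minimal type whose Frobenius has a primitive sixth root of unity as an eigenvalue, so that its realization requires $\F_q$ to contain the corresponding roots of unity (that is $6\mid q-1$), a restriction inherited from the minimal degree-$3$ and degree-$2$ classifications; establishing both the construction for $q=6k+1$ and the non-existence otherwise is the delicate endpoint of the argument.
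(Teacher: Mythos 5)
Your outline shares the paper's basic reduction --- realize each nonminimal type as a blowup of $\Pro^2_{\F_q}$, $\Pro^1_{\F_q}\times\Pro^1_{\F_q}$ or a minimal surface of higher degree at points in general position, count $\F_q$-points on and off the lines to decide when such a configuration exists, and quote \cite{Ry05}, \cite{RT17}, \cite{Tr17}, \cite{BFL16} for the minimal cases --- but it is missing the one device that makes the paper's proof close: the Geiser twist (Definition \ref{GeiserTwistDef}). The $60$ conjugacy classes in $W(E_7)$ fall into $30$ pairs exchanged by multiplication by the Geiser involution, and a surface of one type exists over $\F_q$ if and only if its twin does. This is not merely a labour-saving symmetry. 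A type whose Carter graph is not a product of graphs of type $A_k$ (for instance $27$, $40$, $45$, $50$, $53$--$56$, $58$--$60$) cannot be obtained by blowing up $\Pro^2_{\F_q}$ at closed points, and many of these are neither minimal nor blowups of a cubic surface at an $\F_q$-point (compare the last column of Table \ref{table3}: no cubic type blows up to type $27$). In the paper such types are produced \emph{only} as Geiser twists of constructible ones, so your plan as written has no route to them. Relatedly, Lemma \ref{sameeigenvalues} is needed to tell apart the six pairs of classes sharing a Carter graph ($5/6$, $9/10$, etc.), which your eigenvalue bookkeeping alone cannot distinguish.

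Two further points. First, your reading of part (7) is stronger than the statement: the theorem asserts existence of types $47$ and $56$ for $q=6k+1$ and is silent about other $q$; it does not claim non-existence, so you should not set out to prove it. Second, simply invoking ``the existing classifications and their $q$-restrictions'' for the minimal types yields a weaker theorem than the one stated: the restrictions from \cite{RT17} on cubic surfaces of types $(c_{12})$ and $(c_{13})$ (odd $q$ only) propagate to the degree-$2$ types $46$, $58$, $48$, $51$, and removing them requires the new even-characteristic construction of Lemma \ref{TypesC12C13}. Your arc-theoretic sketch for excluding $\F_7$ and $\F_8$ in part (1) is in the right spirit (it is the content of the cited case of \cite{BFL16}), but the thresholds in parts (2), (3) and (6) in the paper come from explicit line-incidence counts on specific auxiliary cubic and quartic del Pezzo surfaces (Lemmas \ref{Type3}, \ref{Type10}, and the Eckardt-point analysis via Lemma \ref{Eckardtlines}), not from plane arcs, and these would still have to be carried out case by case.
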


\begin{theorem}
\label{DP3}

In the notation of Table \ref{table3} the following holds.

\begin{enumerate}
\item A cubic surface of type $(c_1)$ does not exist over $\F_2$, $\F_3$, $\F_5$, and exists over other finite fields.

\item Cubic surfaces of types $(c_2)$, $(c_3)$ do not exist over $\F_2$, $\F_3$, and exist over other finite fields.

\item Cubic surfaces of types $(c_4)$, $(c_5)$, $(c_9)$, $(c_{10})$ do not exist over $\F_2$, and exist over other finite fields.

\item Cubic surfaces of types $(c_6)$--$(c_8)$, $(c_{11})$--$(c_{13})$ and $(c_{15})$--$(c_{25})$ exist over all finite fields.

\item A cubic surface of type $(c_{14})$ exists over any $\F_q$ where $q = 6k + 1$.

\end{enumerate}

\end{theorem}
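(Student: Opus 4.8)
The plan is to treat the twenty-five conjugacy classes of Table~\ref{table3} one at a time, separating the five minimal types from the non-minimal ones. By Corollary~\ref{dPblowup} every non-minimal cubic surface arises by blowing up $\Pro^2_{\F_q}$ (or a del Pezzo surface of higher degree) at a Galois-stable collection of closed points of prescribed degrees lying in general position. Since the action of $\Gal(\overline{\F}_q/\F_q)$ on $\Pic(\XX)$ fixes the class of the line and permutes the six exceptional classes exactly as Frobenius permutes the blown-up geometric points, the type is read off from the degree partition of these points together with the induced cyclic permutation inside $S_6 \subset W(E_6)$. Thus for each non-minimal type I would first identify the corresponding partition of the six points into Galois orbits, and then reduce existence to a purely combinatorial question: does $\Pro^2_{\F_q}$ carry such a configuration in general position, in the sense of Theorem~\ref{GenPos}?

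First I would dispose of the split type $(c_1)$, which corresponds to six $\F_q$-points in general position, i.e.\ six points no three of which are collinear and not all lying on a conic. This is precisely the question of the existence of a suitable $6$-arc, governed by the classical theory of arcs in $\Pro^2(\F_q)$: the maximal size of an arc is $q+1$ for odd $q$ and $q+2$ for even $q$, and for odd $q$ every $(q+1)$-arc is a conic. Hence over $\F_3$ one cannot even find six points with no three collinear, over $\F_5$ any six such points automatically lie on a conic, and over $\F_2$ a direct count rules out the configuration, whereas over $\F_4$ a hyperoval supplies six points in general position and over every remaining $\F_q$ an arc of the required kind exists. This is exactly the exceptional behaviour of part~(1). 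The remaining non-minimal types $(c_2)$--$(c_{13})$, $(c_{15})$--$(c_{25})$ involve points of degree $\geqslant 2$; since closed points of degree $d$ are far more numerous, the general-position obstruction bites only for the very smallest fields, and the thresholds in parts~(2)--(4) would be established by the same count-and-arc analysis applied to mixed-degree configurations, with existence for all larger $q$ following from an explicit construction.

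For the minimal types I would invoke the constructions already in the literature: one type is realized in \cite{SD10} and the others, including $(c_{14})$, in \cite[Theorem~1.2]{RT17}. For the minimal types carrying a restriction on $q$ I would cite \cite{RT17} for existence over the admissible $q$, while the nonexistence over the excluded small fields would be obtained by analysing the forced Galois configuration of the twenty-seven lines together with the constraint the trace $a$ of the type places on $N_1 = q^2 + aq + 1$.

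The delicate point is type $(c_{14})$ of part~(5), realizable exactly when $q \equiv 1 \pmod 6$. Here a generator of $\Gamma$ is an element of order six in $W(E_6)$, a surface of this type is not an $\F_q$-blowup, and the combinatorial method does not apply. For sufficiency I would exhibit an explicit minimal cubic carrying a cyclic structure of order three (for instance a suitable diagonal cubic), whose Frobenius realizes the class; the hypothesis $q \equiv 1 \pmod 6$ guarantees that $\F_q$ contains the primitive sixth roots of unity this structure requires. For necessity I would show that the prescribed cyclic action forces those roots of unity into $\F_q$, so the type is unrealizable otherwise. Verifying this arithmetic necessary-and-sufficient condition, and pinning down the exact small-field thresholds of parts~(1)--(4), are the main obstacles; the remainder is a systematic but routine case check guided by Table~\ref{table3}.
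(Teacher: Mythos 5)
There is a genuine gap, and it sits at the heart of your plan. You reduce every non-minimal type to a configuration of closed points in general position on $\Pro^2_{\F_q}$, on the grounds that the type ``is read off from the degree partition of these points together with the induced cyclic permutation inside $S_6 \subset W(E_6)$.'' But a non-minimal cubic surface is a blowup of $\Pro^2_{\F_q}$ at six points only when $\Gamma$ preserves the class $L$ and a sixer of disjoint exceptional curves, i.e.\ only when the conjugacy class is represented by a permutation in $S_6$. Exactly $11$ of the $20$ non-minimal classes in Table \ref{table3} arise this way (those indexed by partitions of $6$: $c_1$, $c_2$, $c_5$, $c_6$, $c_9$, $c_{15}$--$c_{18}$, $c_{21}$, $c_{23}$). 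The other nine non-minimal types --- $c_3$, $c_4$, $c_7$, $c_8$, $c_{19}$, $c_{20}$, $c_{22}$, $c_{24}$, $c_{25}$ --- are not plane blowups at all (e.g.\ $A_1^4$ and $A_2\times A_1^2$ are not realized by permutations of six exceptional classes), and your ``count-and-arc analysis on mixed-degree configurations'' has nothing to say about them. The paper obtains these as blowups of \emph{minimal} del Pezzo surfaces of degree $4$ carrying conic bundle structures (Lemma \ref{Type10} for $c_3$, and \cite{Tr17} for $c_4$), or as blowdowns of degree-$2$ del Pezzo surfaces of the appropriate type (Remark \ref{DP2blowdown} for $c_7$, $c_8$, $c_{15}$, $c_{20}$, $c_{22}$--$c_{25}$), which in turn rests on the whole of Section~4. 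The nonexistence statements for $c_3$, $c_4$, $c_{10}$ over small fields likewise do not come from arcs in the plane but from the nonexistence of the underlying minimal surfaces. None of this machinery appears in your outline.

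Two further points. First, for the minimal types $(c_{12})$ and $(c_{13})$ you propose to ``invoke the constructions already in the literature,'' but \cite[Theorem 1.2]{RT17} constructs these only for odd $q$, whereas part (4) of the theorem asserts existence over \emph{all} finite fields; closing the even-characteristic case is precisely the content of Lemma \ref{TypesC12C13}, which requires an explicit cubic with an automorphism of order $4$ combined with the twisting argument of Proposition \ref{dPtwist}. Second, your discussion of $(c_{14})$ misstates the class (it is $E_6(a_1)$, of order $9$, with primitive ninth roots of unity as eigenvalues --- not order $6$), and you set out to prove an ``exactly when $q\equiv 1 \pmod 6$'' equivalence that the theorem does not claim: part (5) asserts only existence for $q=6k+1$ and is silent about other $q$, so your proposed necessity argument is both unnecessary and, as sketched, unsupported. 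Your treatment of $(c_1)$ via the classification of arcs is correct and matches the standard argument the paper cites, but the bulk of the theorem lies in the cases your framework does not reach.
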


\begin{theorem}
\label{DP4}

In the notation of Table \ref{table4} the following holds.

\begin{enumerate}
\item Del Pezzo surfaces of degree $4$ of types $\mathrm{id}$, $\iota_{ab}$, $\iota_{abcd}$ do not exist over $\F_2$, $\F_3$, and exist over other finite fields.

\item Del Pezzo surfaces of degree $4$ of types $(ab)(cd)$, $(ab)(cd)\iota_{ac}$, $(ab)(cd)\iota_{ae}$ do not exist over $\F_2$, and exist over other finite fields.

\item The other types of del Pezzo surfaces of degree $4$ exist over all finite fields.

\end{enumerate}

\end{theorem}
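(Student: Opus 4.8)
My plan is to realize a del Pezzo surface $X$ of degree $4$ over $\F_q$ as a smooth intersection of two quadrics $Q_1\cap Q_2\subset\Pro^4_{\F_q}$ and to read its type off the associated pencil $\{\lambda Q_1+\mu Q_2\}$. The discriminant $f(\lambda,\mu)=\det(\lambda Q_1+\mu Q_2)$ is a binary quintic form whose five roots $t_1,\dots,t_5\in\Pro^1(\overline{\F}_q)$ are distinct (because $X$ is smooth) and mark the degenerate members of the pencil; each such member is a rank-$4$ quadric, a cone over a smooth quadric surface carrying two rulings. Under the isomorphism $W\cong(\Z/2\Z)^4\rtimes S_5$ of Proposition~\ref{nomistakeDP4}, the image of Frobenius in the quotient $S_5$ is exactly its permutation of $\{t_1,\dots,t_5\}$, while the $(\Z/2\Z)^4$-part records, for each $t_i$, whether Frobenius fixes or exchanges the two rulings of the corresponding quadric cone. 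Thus the $S_5$-part of the type equals the cycle type of Frobenius on the degenerate members, i.e.\ the degree pattern of the irreducible factors of $f$ over $\F_q$, and the sign part is the ruling data.

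For the non-existence statements I would argue purely by counting the degenerate members. The three types in~(1) have trivial $S_5$-part, so $f$ must split into five distinct linear factors; this needs five distinct $\F_q$-points on $\Pro^1$, and since $|\Pro^1(\F_q)|=q+1$ it forces $q\geqslant4$, excluding $\F_2$ and $\F_3$. The three types in~(2) have $S_5$-part $(ab)(cd)$ of cycle type $2+2+1$, so $f$ must have two \emph{distinct} irreducible quadratic factors; as $\F_2$ admits a unique monic irreducible quadratic but $\F_3$ admits three, this excludes $\F_2$ alone. For every remaining cycle type ($2+1+1+1$, $3+1+1$, $3+2$, $4+1$, $5$) the required numbers of distinct closed points of degrees $1,\dots,5$ on $\Pro^1_{\F_q}$ are available already over $\F_2$, so no type in~(3) is obstructed.

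For the existence statements I would run the construction in reverse over every admissible $q$. First choose a binary quintic $f$ with the prescribed factorization type, which is possible exactly when $q$ meets the threshold above. Next choose the ruling data: the number of Frobenius-swapped rulings is even, and it is realizable over any $\F_q$ because both the split quadric surface $\Pro^1\times\Pro^1$ and the nonsplit one occur over $\F_q$, giving a fixed resp.\ an exchanged pair of rulings. Finally assemble a pencil with these invariants by working in a basis of $\F_q^5$ adapted to the \'etale algebra $\F_q[t]/(f)$ and check that a generic such pencil has smooth base locus, producing a del Pezzo surface of the desired type in Table~\ref{table4}. Equivalently, for the non-minimal types one may realize $X$ as the blowup of $\Pro^2$ or of a degree-$8$ del Pezzo surface at a Galois orbit of points in general position via Theorem~\ref{GenPos}, and for the minimal conic-bundle types invoke the constructions of \cite[Theorem~3.2]{Ry05}; in either description the number of points or degenerate fibres to be placed is again governed by the counts above.

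The non-existence half is then a short counting argument, so the main obstacle is the construction. The difficulty is to produce an honest \emph{smooth} surface, not merely the combinatorial data, with the prescribed discriminant and ruling pattern over the smallest admissible fields ($\F_4$ for the types in~(1) and $\F_3$ for those in~(2)), and to match the $(\Z/2\Z)^4$-part correctly when the cycle type is nontrivial: for a type such as $(ab)(cd)\iota_{ac}$ the ruling data must be prescribed over the residue fields of the conjugate degenerate quadrics rather than over $\F_q$, which is more delicate than in the split case $(ab)(cd)$. Verifying smoothness of the assembled pencils, and the general position of the blown-up orbits, for each of these borderline types is the crux of the argument.
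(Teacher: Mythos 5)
Your framework --- realizing a degree-$4$ del Pezzo surface as a pencil of quadrics in $\Pro^4_{\F_q}$ and reading the type off the factorization of the discriminant quintic together with the ruling data at the five degenerate members --- is a genuinely different route from the paper, which instead reduces twelve of the eighteen types to Theorem \ref{DP3} by contracting a line on a cubic surface of the type in the last column of Table \ref{table4}, and treats the remaining six types ($\mathrm{id}$, $(ab)(cd)$, $\iota_{ab}$, $\iota_{abcd}$, $(ab)(cd)\iota_{ae}$, $(ab)(cd)\iota_{ac}$) one by one via explicit blowups of $\Pro^2_{\F_q}$ or of a quadric, or via the conic-bundle constructions of \cite{Ry05}. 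Your non-existence half is correct and, granting the classical dictionary between the type in $\left(\Z/2\Z\right)^4 \rtimes S_5$ and the pair (factorization of the discriminant, ruling swaps), it is more uniform and cleaner than the paper's: the statement that types with trivial image in $S_5$ need five distinct $\F_q$-points on $\Pro^1$ (so $q\geqslant 4$), and that cycle type $2+2+1$ needs two distinct degree-$2$ closed points on $\Pro^1$ (so $q\geqslant 3$), disposes of all six non-existence claims at once, whereas the paper's Lemma \ref{TypeIab} has to run a rather delicate count of degree-$2$ points in general position on $\Pro^1_{\F_3}\times\Pro^1_{\F_3}$ to exclude $\iota_{ab}$ over $\F_3$. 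You should, however, state and justify the dictionary itself, since it is the load-bearing fact of the whole argument.

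The genuine gap is the existence half, and you essentially concede it yourself. Asserting that ``a generic such pencil has smooth base locus'' proves nothing over $\F_3$ or $\F_4$, where the set of pencils with the prescribed discriminant factorization may be very small; one must actually exhibit a smooth pencil (or an orbit of points in general position) for each type over each admissible $q$, and this is where all the work in the paper lives: Lemma \ref{Type(ab)(cd)} places an $\F_q$-point and two degree-$2$ points on a conic, Lemma \ref{TypeIab} contracts a line on the cubic surface of type $(c_2)$ built in Lemma \ref{TypeC2} for $q\geqslant 4$, Lemma \ref{Type(ab)(cd)Iae} contracts a line on the cubic of type $(c_5)$ from Lemma \ref{Type14}, and the minimal types $\iota_{abcd}$ and $(ab)(cd)\iota_{ac}$ are imported from \cite[Theorem 3.2]{Ry05}. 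A second unaddressed point is the realizability of the $\left(\Z/2\Z\right)^4$-part: the types $\mathrm{id}$, $\iota_{ab}$ and $\iota_{abcd}$ all have the same (split) discriminant, so to separate them you must show that, with five rational roots fixed, pencils realizing exactly $0$, $2$ and $4$ swapped rulings all exist over every $\F_q$ with $q\geqslant 4$; ``both the split and nonsplit quadric surface occur over $\F_q$'' does not by itself show that the swap pattern can be prescribed independently at the five cones while keeping the base locus smooth. Until these constructions are carried out concretely over the borderline fields, the existence statements in all three parts of the theorem remain unproved.
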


The plan of this paper is as follows.

In Section $2$ we remind some notions about del Pezzo surfaces and the corresponding Weyl groups.

In Section $3$ we give a short proof of a well-known fact, that del Pezzo surfaces of degree~$5$ or greater of any type exist over any $\F_q$, and give the classification of the types of del Pezzo surfaces of degree $5$ or greater.

In Sections $4$--$6$ we prove Theorems \ref{DP2}, \ref{DP3} and \ref{DP4}. By Corollary \ref{dPblowup} if a del Pezzo surface $Y$ of degree $d \geqslant 3$ contains an $\F_q$-point not lying on the lines, then the blowup of $Y$ at this point a del Pezzo surface $X$ of degree $d -1$. Moreover, the type of $X$ is defined by the type of $Y$. Therefore surfaces of the same type as $Y$ exist over any finite field, such that there exists a surface $X$ of the corresponding type. Thus our strategy is the following. We start from del Pezzo surfaces of degree $2$, then we pass to del Pezzo surfaces of degree~$3$ and consider only those types, for which del Pezzo surfaces of the corresponding types do not exist over all finite fields. After this we consider the types of del Pezzo surfaces of degree $4$ such that their existence over all finite fields does not immediately follows from the existence of del Pezzo surface of degree $3$ of the corresponding types. 

In Section $4$ we consider del Pezzo surfaces of degree $2$ and prove Theorem \ref{DP2}. In Table~\ref{table2} we give the classification of conjugacy classes in $W(E_7)$. These $60$ conjugacy classes are divided into $30$ pairs, such that the Geiser twist (see Definition \ref{GeiserTwistDef}) of a surface of one type in a pair has the other type in the pair. The surfaces of types, that are paired by Geiser twist, exist over the same finite fields. Many pairs of types are considered in \cite{BFL16} and \cite{Tr17}. For each of the remaining pairs of types one type in the pair can be constructed as either the blowup of $\Pro^2_{\F_q}$ at some points of certain degrees, or the blowup of a minimal del Pezzo surface of degree $4$ at two $\F_q$-points, or the blowup of a cubic surface at an $\F_q$-point. We give the constructions of del Pezzo surfaces of degree $2$ of these types over $\F_q$ if it is possible, and show that such types of surfaces do not exist for other values of $q$.

In Section $5$ we consider del Pezzo surfaces of degree $3$ (that are cubic surfaces) and prove Theorem \ref{DP3}. In Table~\ref{table3} we give the classification of conjugacy classes in $W(E_6)$. There are $25$ types of cubic surfaces. Existence of cubic surfaces of eight types over all finite fields immediately follows from the existence of the corresponding (see the last column of Table \ref{table3}) del Pezzo surfaces of degree $2$ over all finite fields. Many types are considered in \cite{Ry05}, \cite{SD10}, \cite{RT17}, \cite{Tr17} and \cite{BFL16}. Some types of cubic surfaces are considered in the proofs of some lemmas in Section $4$. The remaining three types can be constructed as the blowup of $\Pro^2_{\F_q}$ at some points of certain degrees. We give the constructions of cubic surfaces of these types over $\F_q$ if it is possible, and show that such types of surfaces do not exist for other values of $q$.

In Section $6$ we consider del Pezzo surfaces of degree $4$ and prove Theorem \ref{DP4}. In Table~\ref{table4} we give the classification of conjugacy classes in $W(D_5) \cong \left(\Z / 2\Z\right)^4 \rtimes S_5$. There are $18$ types of del Pezzo surfaces of degree $4$. For almost all types del Pezzo surfaces of degree $4$ exist over all finite fields, since cubic surfaces of the corresponding (see the last column of Table \ref{table4}) types exist over all finite fields. Three of the remaining types are considered in \cite{Ry05} and \cite{BFL16}. The remaining three types can be constructed as the blowup of $\Pro^2_{\F_q}$ or a smooth quadric in $\Pro^3_{\F_q}$ at some points of certain degrees. We give the constructions of del Pezzo surfaces of degree $4$ of these types over $\F_q$ if it is possible, and show that such types of surfaces do not exist for other values of $q$.

In Section $7$ we discuss problems arising for del Pezzo surfaces of degree $1$, and show that for almost all methods of constructing of del Pezzo surfaces of degree $2$ or greater there appear some additional difficulties.

The author is a Young Russian Mathematics award winner and would like to thank its sponsors and jury. Also the author is grateful to S.\,Rybakov for valuable comments. 

\begin{notation}

Throughout this paper all surfaces are smooth, projective and defined over a finite field $\F_q$ of order $q$, where $q$ is a power of prime. For a surface $X$ we denote~$X \otimes \overline{\F}_q$ by $\XX$. For a del Pezzo surface $X$ we denote the Picard group by $\Pic(X)$, the image of the group~$\Gal\left( \overline{\F}_q / \F_q\right)$ in the corresponding Weyl group $W(R)$ acting on the Picard group~$\Pic(\XX)$ is denoted by $\Gamma$, for the $\Gamma$-invariant Picard group $\Pic(\XX)^{\Gamma}$ one \mbox{has $\Pic(\XX)^{\Gamma} = \Pic(X)$}. The number $\rho(X) = \operatorname{rk} \Pic(X)$ is the Picard number of $X$, for the $\Gamma$-invariant Picard number $\rho(\XX)^{\Gamma}$ one has $\rho(\XX)^{\Gamma} = \rho(X)$. The subspace of classes $C$ in $\Pic(\XX) \otimes \mathbb{Q}$ such that $C \cdot K_X = 0$ is denoted by $K_X^{\perp}$.

For any variety $X$ we denote by $\Fr$ the Frobenius automorphism of $X$.

We denote by $\xi_d$ a primitive root of unity of order $d$, and $\omega = \xi_3$, $\ii = \xi_4$. 

\end{notation}

\section{Del Pezzo surfaces and Weyl groups}

In this section we remind some basic notions about del Pezzo surfaces and the corresponding Weyl groups.

\begin{definition}
\label{DPdef}
A smooth projective surface $X$ such that the anticanonical class $-K_X$ is ample is called a {\it del Pezzo surface}.

The number $d = K_X^2$ is called the {\it degree} of a del Pezzo surface.
\end{definition}

It is well known that a del Pezzo surface $\XX$ over an algebraically closed field $\kka$ is isomorphic to $\Pro^2_{\kka}$, $\Pro^1_{\kka} \times \Pro^1_{\kka}$ or the blowup of $\Pro^2_{\kka}$ at up to $8$ points in \textit{general position}. More precisely, the following theorem holds.

\begin{theorem}[cf. {\cite[Theorem IV.2.5]{Man74}}]
\label{GenPos}
Let $1 \leqslant d \leqslant 9$, and $p_1$, $\ldots$, $p_{9-d}$ be $9-d$ points on the projective plane $\Pro^2_{\kka}$ such that
\begin{itemize}
\item no three lie on a line;
\item no six lie on a conic;
\item for $d = 1$ the points are not on a singular cubic curve with singularity at one of these points.
\end{itemize}
Then the blowup of $\Pro^2_{\kka}$ at $p_1$, $\ldots$, $p_{9-d}$ is a del Pezzo surface of degree $d$.

Moreover, any del Pezzo surface $\XX$ of degree $1 \leqslant d \leqslant 7$ over algebraically closed field $\kka$ is the blowup of such set of points.
\end{theorem}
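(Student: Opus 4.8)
The plan is to prove the two assertions separately, using the Nakai--Moishezon criterion for the forward implication and a descending induction on the degree for the converse.

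For the forward direction, let $\pi \colon \XX \to \Pro^2_{\kka}$ be the blowup at $p_1, \ldots, p_{9-d}$, let $H$ denote the pullback of a line and $E_1, \ldots, E_{9-d}$ the exceptional curves. Then $-K_{\XX} = 3H - \sum_i E_i$ and $K_{\XX}^2 = 9 - (9-d) = d > 0$, so by Nakai--Moishezon it suffices to show $-K_{\XX} \cdot C > 0$ for every irreducible curve $C$. Writing $C = eH - \sum_i m_i E_i$, where $e \geq 0$ is the degree of $\pi(C)$ and $m_i$ its multiplicity at $p_i$, one computes $-K_{\XX} \cdot C = 3e - \sum_i m_i$, so the task reduces to bounding $\sum_i m_i$ by $3e - 1$ whenever $e \geq 1$.

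First I would dispose of the low-degree cases. For $e = 0$ the curve is some $E_i$ and $-K_{\XX}\cdot E_i = 1$. For $e = 1$ the arithmetic genus forces every $m_i \leq 1$, and the hypothesis that no three points are collinear gives $\sum_i m_i \leq 2$; for $e = 2$ the same genus bound forces $m_i \leq 1$, and the hypothesis that no six points lie on a conic gives $\sum_i m_i \leq 5$. For $e = 3$ the genus inequality $\sum_i \binom{m_i}{2} \leq \binom{e-1}{2} = 1$ shows that the only configuration with $\sum_i m_i \geq 9$ is one double point together with seven simple points on an irreducible cubic through all eight points, which occurs only when $d = 1$ and is excluded by the third hypothesis. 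The main obstacle is the range $e \geq 4$: here I would combine the genus inequality, rewritten as $\sum_i m_i^2 \leq (e-1)(e-2) + \sum_i m_i$, with the Cauchy--Schwarz estimate $(\sum_i m_i)^2 \leq (9-d)\sum_i m_i^2$ and the bound $9 - d \leq 8$. Assuming $\sum_i m_i \geq 3e$ this yields $e \leq 4$, and the remaining case $e = 4$ is ruled out by a direct count against the budget $\sum_i\binom{m_i}{2}\leq 3$, which caps $\sum_i m_i$ at $11 < 12$.

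For the converse, let $\XX$ be a del Pezzo surface with $1 \leq d \leq 7$. Ampleness of $-K_{\XX}$ forces the geometric genus and the irregularity of $\XX$ to vanish, so $\XX$ is rational by Castelnuovo's criterion; its only minimal models are $\Pro^2_{\kka}$ (degree $9$) and $\Pro^1_{\kka}\times\Pro^1_{\kka}$ (degree $8$). Since $d \leq 7$ the surface is non-minimal and therefore contains a $(-1)$-curve. Contracting it by Castelnuovo's contractibility criterion produces a smooth surface of degree $d+1$ whose anticanonical class is again ample, and iterating gives a birational morphism onto a minimal del Pezzo surface. The subtle point is that this morphism might terminate at $\Pro^1_{\kka}\times\Pro^1_{\kka}$ rather than at $\Pro^2_{\kka}$; I would handle it by observing that the blowup of $\Pro^1_{\kka}\times\Pro^1_{\kka}$ at one point is isomorphic to the blowup of $\Pro^2_{\kka}$ at two points, so whenever the contraction reaches $\Pro^1_{\kka}\times\Pro^1_{\kka}$ one may back up one step and re-contract a different $(-1)$-curve to land on $\Pro^2_{\kka}$ instead. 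This exhibits $\XX$ as the blowup of $\Pro^2_{\kka}$ at $9 - d$ points. Finally, these points must be in general position: if they were not --- three collinear, six on a conic, or (for $d = 1$) lying on a cubic singular at one of them --- then the computation of the forward direction would produce an irreducible curve $C$ with $-K_{\XX}\cdot C \leq 0$, contradicting the ampleness of $-K_{\XX}$. Hence the three conditions hold, which completes the proof.
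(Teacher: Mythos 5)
The paper does not prove this statement; it is quoted as a classical result with a reference to \cite[Theorem IV.2.5]{Man74}, so there is no in-paper argument to compare against. Your proof is correct and is essentially the standard one found in the cited source: Nakai--Moishezon plus the genus bound $\sum_i\binom{m_i}{2}\leqslant\binom{e-1}{2}$ for the forward direction (your Cauchy--Schwarz treatment of $e\geqslant 4$ is a clean way to close the range), and successive contraction of $(-1)$-curves for the converse. The only point worth a sentence more in the converse is that the composite $\XX\to\Pro^2_{\kka}$ is a priori a blowup of possibly \emph{infinitely near} points; this is excluded because an infinitely near centre would leave a $(-2)$-curve $C$ with $-K\cdot C=0$ on one of the intermediate surfaces, contradicting the fact (which your own contraction argument establishes) that every surface in the chain is del Pezzo --- the same ampleness mechanism you already invoke to rule out collinear triples, coconic sextuples, and the singular cubic.
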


\begin{definition}
\label{GenPosdef}
We say that a collection of geometric points on $\Pro^2_{\ka}$ is \textit{in general position} if it satisfies the conditions of Theorem \ref{GenPos}.

Moreover, for a number of points of certain degree on $\Pro^2_{\F_q}$ we say that these points are \textit{in general position} if the corresponding geometric points are in general position.
\end{definition}

\begin{definition}
\label{lines}
A curve $E$ of genus $0$ on a surface such that $E^2 = -1$ is called \mbox{a \textit{$(-1)$-curve}}.

For a surface $X$ one has $(K_X + E)\cdot E = -2$, thus if $X$ is a del Pezzo surface \mbox{then $-K_X \cdot E = 1$}, and the anticanonical morphism $\varphi_{|-K_X|}$ maps $E$ to a line in a projective space of dimension $d = K_X^2$ for $d \geqslant 2$.

Therefore if there is no confusion $(-1)$-curves on del Pezzo surfaces are called \textit{lines}.
\end{definition}

The following theorem describes the set of $(-1)$-curves on del Pezzo surfaces.

\begin{theorem}[cf. {\cite[Theorem IV.4.3]{Man74}}]
\label{DPlines}
Let $\XX$ be a del Pezzo surface of degree $1 \leqslant d \leqslant 7$ and $f: \XX \rightarrow \Pro^2_{\kka}$ be the blowup of $\Pro^2_{\kka}$ at points $p_1$, \ldots, $p_{9-d}$. Then the set of $(-1)$-curves on $\XX$ consists of the following curves:
\begin{itemize}
\item the preimages of $p_i$;
\item the proper transforms of lines passing through two points from the set $\{p_i\}$;
\item the proper transforms of conics passing through five points from the set $\{p_i\}$;
\item the proper transforms of cubics passing through seven points from the set $\{p_i\}$ and passing through one of these points with multiplicity $2$;
\item the proper transforms of quartics passing through eight points from the set $\{p_i\}$ and passing through three of these points with multiplicity $2$;
\item the proper transforms of quintics passing through eight points from the set $\{p_i\}$ and passing through six of these points with multiplicity $2$;
\item the proper transforms of sextics passing through eight points from the set $\{p_i\}$ with multiplicity at least $2$ and passing through one of these points with multiplicity $3$.
\end{itemize}

\end{theorem}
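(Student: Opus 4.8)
The plan is to reduce the statement to a finite Diophantine enumeration inside the Picard lattice, and then to realize each solution geometrically using Riemann--Roch together with the ampleness of $-K_{\XX}$. First I would fix the standard basis $\ell, e_1, \ldots, e_{9-d}$ of $\Pic(\XX) \cong \Z^{10-d}$, where $\ell = f^*\mathcal{O}(1)$ and the $e_i$ are the classes of the exceptional divisors, so that $\ell^2 = 1$, $e_i^2 = -1$, $\ell \cdot e_i = 0$, $e_i \cdot e_j = 0$ for $i \neq j$, and $K_{\XX} = -3\ell + \sum_i e_i$. By adjunction, the class $E = a\ell - \sum_i b_i e_i$ of any $(-1)$-curve satisfies $E^2 = -1$ and $-K_{\XX}\cdot E = 1$, that is
\[
\sum_i b_i^2 = a^2 + 1, \qquad \sum_i b_i = 3a - 1 .
\]

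Second, I would bound $a$. By Cauchy--Schwarz, $(3a-1)^2 = \left(\sum_i b_i\right)^2 \leqslant (9-d)\sum_i b_i^2 = (9-d)(a^2+1) \leqslant 8(a^2+1)$, which yields $a^2 - 6a - 7 \leqslant 0$, hence $a \leqslant 7$; a short check that $8$ nonnegative integers cannot sum to $20$ with squares summing to $50$ eliminates $a = 7$, leaving $0 \leqslant a \leqslant 6$. For each value of $a$, the two equations together with the integrality of the $b_i$ and the constraint that at most $9-d$ of them are nonzero have, up to permutation of the $e_i$, a unique solution pattern: the exceptional classes for $a = 0$, and then lines, conics, cubics, quartics, quintics and sextics for $a = 1, \ldots, 6$ with exactly the multiplicities listed in the theorem. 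The elementary fact that minimizing $\sum_i b_i^2$ for fixed $\sum_i b_i$ forces the entries to be as equal as possible also shows that $b_i \geqslant 0$ for every $i$ once $a \geqslant 1$, so no spurious negative-coefficient solutions arise.

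Third, for the geometric realization I would show that each such class $E$ is represented by a unique irreducible $(-1)$-curve. Riemann--Roch gives $\chi(E) = 1 + \tfrac12\left(E^2 - E\cdot K_{\XX}\right) = 1$. Since general position guarantees, via Theorem \ref{GenPos}, that $\XX$ is a del Pezzo surface, $-K_{\XX}$ is ample; as $(-K_{\XX})\cdot(K_{\XX} - E) = -d - 1 < 0$, the class $K_{\XX} - E$ is not effective, so $h^2(E) = h^0(K_{\XX} - E) = 0$ and therefore $h^0(E) \geqslant 1$, i.e.\ $E$ is effective. If a divisor $D \in |E|$ decomposed as $\sum_j n_j C_j$ with the $C_j$ irreducible and $n_j \geqslant 1$, then ampleness forces $-K_{\XX}\cdot C_j \geqslant 1$, so $1 = -K_{\XX}\cdot D \geqslant \sum_j n_j$ leaves a single reduced irreducible component; adjunction then gives $p_a(D) = 1 + \tfrac12\left(D^2 + D\cdot K_{\XX}\right) = 0$, so $D$ is a smooth rational $(-1)$-curve. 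Reading the multiplicities $E\cdot e_i$ off the image $f(D)$ identifies it with the plane curve of the corresponding bullet. Conversely every $(-1)$-curve yields a class solving the two equations, so the enumeration is complete.

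The step I expect to be the main obstacle is the bookkeeping in the second paragraph: obtaining the integrality refinement of the Cauchy--Schwarz bound, and then checking that for each $a$ the stated multiplicity pattern is the only admissible solution while respecting the bound $9-d$ on the number of nonzero entries (which is precisely what makes the higher-degree curve types appear only for small $d$). By contrast, the geometric realization in the third paragraph is uniform and short, once one notices that ampleness of $-K_{\XX}$ by itself already delivers irreducibility and the vanishing $h^2(E) = 0$, so that no case-by-case construction of the plane curves is needed.
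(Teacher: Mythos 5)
Your argument is correct and complete: the lattice enumeration of classes with $E^2=-1$, $K_{\XX}\cdot E=-1$ via Cauchy--Schwarz, followed by the Riemann--Roch/ampleness argument producing an irreducible representative of each class, is the standard proof of this classical result. The paper itself offers no proof to compare against --- it simply cites \cite[Theorem IV.4.3]{Man74} --- so there is no divergence to report; the only point worth tightening in a write-up is the claim that $b_i\geqslant 0$ for $a\geqslant 1$, which does follow from the same Cauchy--Schwarz estimate applied to the remaining coordinates (or, more directly, from $E\cdot e_i\geqslant 0$ for distinct irreducible curves), but deserves a line of justification rather than an appeal to the equal-entries heuristic.
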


Applying Theorem \ref{DPlines} one can compute the numbers of $(-1)$-curves on del Pezzo surfaces, that are presented in the following table.

\begin{center}
\begin{tabular}{|c|c|c|c|c|c|c|c|c|c|}
\hline
Degree & $9$ & $8$ & $7$ & $6$ & $5$ & $4$ & $3$ & $2$ & $1$ \\
\hline
Number of lines & $0$ & $1$ or $0$ & $3$ & $6$ & $10$ & $16$ & $27$ & $56$ & $240$ \\
\hline
\end{tabular}
\end{center}

The following useful corollary from Theorem \ref{GenPos} is well-known. For the proof see, for example, \cite[Corollary 3.9]{Tr17}.

\begin{corollary}
\label{dPblowup}
Let $X$ be a del Pezzo surface of degree $3 \leqslant d \leqslant 9$, and let $p$ be \mbox{an $\F_q$-point} that does not lie on the lines. Then the blowup of $X$ at $p$ is a del Pezzo surface of degree~$d-1$.
\end{corollary}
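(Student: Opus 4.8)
The plan is to reduce the statement to the geometric situation over $\overline{\F}_q$ and then read off the conclusion from Theorems \ref{GenPos} and \ref{DPlines}. Ampleness of the anticanonical class is insensitive to field extension, and the formation of the blowup at a reduced point commutes with the flat base change $\F_q \to \overline{\F}_q$; since an $\F_q$-point $p$ gives a single geometric point $\bar{p}$ on $\XX$, one has $(\mathrm{Bl}_p X) \otimes \overline{\F}_q = \mathrm{Bl}_{\bar{p}} \XX$, and $K^2$ drops by exactly $1$ upon blowing up a smooth point. Hence it suffices to prove that $\mathrm{Bl}_{\bar{p}}\XX$ is a del Pezzo surface of degree $d-1$ over $\overline{\F}_q$, where $\bar{p}$ lies on no $(-1)$-curve of $\XX$.

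For $3 \leqslant d \leqslant 7$, Theorem \ref{GenPos} presents $\XX$ as the blowup $f\colon \XX \to \Pro^2_{\overline{\F}_q}$ at points $p_1, \ldots, p_{9-d}$ in general position. The core of the argument is to deduce, from the hypothesis that $\bar{p}$ avoids every $(-1)$-curve, that the augmented collection $p_1, \ldots, p_{9-d}, p'$ with $p' = f(\bar{p})$ is again in general position. Using the explicit list in Theorem \ref{DPlines}: since $\bar{p}$ lies on none of the exceptional divisors over the $p_i$, the morphism $f$ is an isomorphism near $\bar{p}$ and $p'$ is an honest point of the plane distinct from all $p_i$; since $\bar{p}$ lies on no proper transform of a line through two of the $p_i$, the point $p'$ is not collinear with any two of them; and since $\bar{p}$ lies on no proper transform of a conic through five of the $p_i$, the point $p'$ lies on no conic through five of them. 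Combining these with the general position of $p_1, \ldots, p_{9-d}$ already assumed for $\XX$, the enlarged set of $10-d$ points has no three on a line and no six on a conic.

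Here the hypothesis $d \geqslant 3$ enters decisively: the enlarged configuration has $10 - d \leqslant 7$ points, so the resulting surface has degree $d-1 \geqslant 2$, and the general position conditions of Theorem \ref{GenPos} in this range require only the ``no three on a line'' and ``no six on a conic'' clauses, the singular cubic condition being relevant solely for eight points (degree $1$). Thus the higher-degree $(-1)$-curves of Theorem \ref{DPlines} (cubics, quartics, quintics, sextics) impose no further constraint we must check, and Theorem \ref{GenPos} applies to $p_1, \ldots, p_{9-d}, p'$, showing that $\mathrm{Bl}_{\bar{p}}\XX = \mathrm{Bl}_{p_1,\ldots,p_{9-d},p'}\Pro^2_{\overline{\F}_q}$ is a del Pezzo surface of degree $d-1$.

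The remaining cases $d = 8, 9$ are degenerate and handled directly. For $d = 9$ one has $\XX \cong \Pro^2_{\overline{\F}_q}$, which carries no $(-1)$-curve, so the blowup at any point is the blowup of the plane at one point, a del Pezzo surface of degree $8$. For $d = 8$, either $\XX$ is the blowup of $\Pro^2_{\overline{\F}_q}$ at one point $p_1$, whose unique $(-1)$-curve is the exceptional divisor, so the argument above produces a second point $p' \neq p_1$ and a del Pezzo surface of degree $7$; or $\XX \cong \Pro^1_{\overline{\F}_q} \times \Pro^1_{\overline{\F}_q}$, which has no $(-1)$-curve, and blowing up any point yields a del Pezzo surface of degree $7$ via the standard identification $\mathrm{Bl}_{\bar{p}}(\Pro^1_{\overline{\F}_q} \times \Pro^1_{\overline{\F}_q}) \cong \mathrm{Bl}_{\{p_1,p_2\}}\Pro^2_{\overline{\F}_q}$. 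The only delicate point is the bookkeeping in the translation step, namely verifying that avoiding the $(-1)$-curves supplies exactly the line and conic conditions needed; the restriction $d \geqslant 3$ is precisely what keeps us clear of the subtler singular cubic condition.
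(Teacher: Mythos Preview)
Your argument is correct and is precisely the standard proof; the paper itself does not give a proof but simply cites \cite[Corollary 3.9]{Tr17} for this well-known fact. Your reduction to $\overline{\F}_q$, translation of the ``not on any line'' hypothesis into the general-position conditions of Theorem~\ref{GenPos} via the explicit list in Theorem~\ref{DPlines}, and the observation that $d \geqslant 3$ keeps you away from the singular-cubic clause, are exactly how this corollary is proved in the cited reference.
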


To apply Corollary \ref{dPblowup} one should compare the number of $\F_q$-points on $X$ and the number of $\F_q$-points on $X$ lying on the lines. The number of $\F_q$-points on $X$ of certain type is given by equation \eqref{Fpoints}. The following remark allows us to compute the number of~$\F_q$-points on $X$ lying on the lines.

\begin{remark}
\label{Linespoints}
Let $X$ be a del Pezzo surface of degree $3 \leqslant d \leqslant 7$. Then any line defined over $\F_q$ contains $q + 1$ points defined over $\F_q$.

If $d \geqslant 4$ then there are no points of intersection of three or more lines. Therefore the number of $\F_q$-points on $X$ lying on the lines is equal to $A(q + 1) - B + C$, where $A$ is the number of lines defined over $\F_q$, $B$ is the number of pairs of meeting each other lines defined over $\F_q$, and $C$ is the number of pairs of meeting each other conjugate lines defined over $\F_{q^2}$.

If $d = 3$ then then there are no points of intersection of four or more lines. Three lines $H_1$, $H_2$ and $H_3$ meet each other, if and only if the divisor $H_1 + H_2 + H_3$ is linearly equivalent to $-K_X$. For such triple of lines there are two possibilities: either there are three distinct points $H_1 \cap H_2$, $H_1 \cap H_3$, $H_2 \cap H_3$, or the three lines have a common point, that is called an \textit{Eckardt point}. Therefore if all three lines are defined over $\F_q$, then the union of these lines contains $3q$ or $3q + 1$ points defined over $\F_q$. If one of these lines is defined over $\F_q$, and the others are conjugate and defined over $\F_{q^2}$, then the union of these lines contains $q + 2$ or $q + 1$ points defined over $\F_q$. If these three lines are conjugate and defined over $\F_{q^3}$, then the union of these lines contains $0$ or $1$ point defined over $\F_q$. In all other cases there are no $\F_q$-points on the union of these three lines.

Therefore for $d = 3$ the number of $\F_q$-points on $X$ lying on the lines is equal \mbox{to $A(q + 1) - B + C + D - E + F$}, where $A$ is the number of lines defined over $\F_q$, $B$ is the number of pairs of meeting each other lines defined over $\F_q$, $C$ is the number of pairs of meeting each other conjugate lines defined over $\F_{q^2}$, $D$ is the number of Eckardt points lying on a triple of lines defined over $\F_q$, $E$ is the number of Eckardt points lying on a triple of lines defined over $\F_{q^2}$ such that two of these lines are conjugate, $F$ is the number of Eckardt points lying on a triple of conjugate lines defined over $\F_{q^3}$. Note that the type of a cubic surface $X$ defines the numbers $A$, $B$ and $C$. Therefore the number of $\F_q$-points on $X$ lying on the lines is at least $A(q + 1) - B + C - E$ and at most $A(q + 1) - B + C + D + F$.

\end{remark} 

To count the mentioned in Remark \ref{Linespoints} numbers of Eckardt points we need the following lemma.

\begin{lemma}[{\cite[Lemma 9.4]{DD17}}]
\label{Eckardtlines}
A line on a cubic surface contains $0$, $1$ or $2$ Eckardt points if $q$ is odd, and contains $0$, $1$ or $5$ Eckardt points if $q$ is even.
\end{lemma}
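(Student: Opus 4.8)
The plan is to study the pencil of planes through the line and track how the residual conic meets the line. I would choose homogeneous coordinates $x_0,x_1,x_2,x_3$ on $\Pro^3$ so that the given line is $\ell = \{x_2 = x_3 = 0\}$ and the cubic form is $F = x_2 G + x_3 H$ for some quadratic forms $G,H$ (possible precisely because $F$ vanishes on $\ell$). The planes through $\ell$ form a pencil $\{\mu x_2 - \lambda x_3 = 0\}$ parametrized by $[\lambda:\mu]\in\Pro^1$; substituting $x_2 = \lambda t$, $x_3 = \mu t$ gives $F = t\cdot C_{[\lambda:\mu]}$, where $C_{[\lambda:\mu]} = (\lambda G + \mu H)(x_0,x_1,\lambda t,\mu t)$ is the residual conic. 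Its restriction to $\ell$ (i.e. $t = 0$) is the binary quadratic $\lambda g + \mu h$ in $(x_0,x_1)$, where $g = G(x_0,x_1,0,0)$ and $h = H(x_0,x_1,0,0)$; evaluating the partials of $F$ along $\ell$ shows that $X$ is smooth along $\ell$ exactly when $g$ and $h$ have no common zero. There are five tritangent planes through $\ell$ (the five degenerations of the conic into a pair of lines $m+n$ among the $27$ lines), and such a plane yields an Eckardt point on $\ell$ if and only if $m$ and $n$ meet $\ell$ at the same point, i.e. if and only if $C_{[\lambda:\mu]}$ is tangent to $\ell$, i.e. if and only if $\lambda g + \mu h$ has a double root. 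So I must count those $[\lambda:\mu]$ at which $\lambda g+\mu h$ has a double root and the conic simultaneously degenerates.

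Assume first $q$ is odd. Then $\lambda g + \mu h$ has a double root exactly when its discriminant $\delta([\lambda:\mu]) = B^2 - 4AC$ vanishes, where $A,B,C$ are the coefficients of $x_0^2,x_0x_1,x_1^2$, each linear in $[\lambda:\mu]$; thus $\delta$ is a binary quadratic form in $[\lambda:\mu]$. The key point is that $\delta\not\equiv0$: if $\delta$ vanished identically then every member of the pencil $\langle g,h\rangle$ would be a perfect square, and a short computation shows that a pencil of binary quadratics all of whose members are squares consists of squares of proportional linear forms, so $g$ and $h$ would share a zero, contradicting smoothness of $X$ along $\ell$. Hence $\delta$ has at most two roots, giving at most two planes tangent to $\ell$ and therefore at most two Eckardt points; the count lies in $\{0,1,2\}$.

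Now let $q$ be even. In characteristic $2$ the quadratic $A x_0^2 + B x_0 x_1 + C x_1^2$ has a double root if and only if its cross coefficient $B = \lambda G_{01}+\mu H_{01}$ vanishes, and $B$ is now \emph{linear} in $[\lambda:\mu]$ --- this drop in degree is the whole source of the different answer. If $B\not\equiv0$ it has a unique root, so at most one tritangent plane is tangent to $\ell$ and there is at most one Eckardt point. If $B\equiv0$, i.e. $G_{01} = H_{01} = 0$, then every residual conic is tangent to $\ell$, so all five tritangent planes yield Eckardt points; here smoothness forces $g = G_{00}x_0^2 + G_{11}x_1^2$ and $h = H_{00}x_0^2+H_{11}x_1^2$ to have distinct zeros, which guarantees the five tritangent planes are genuinely distinct, so the count is exactly five. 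Thus the count lies in $\{0,1,5\}$.

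The main obstacle is the uniform treatment of the tangency locus across characteristics: the substantive input is that smoothness of $X$ along $\ell$ (no common zero of $g$ and $h$) rules out the identically vanishing discriminant in odd characteristic, whereas in characteristic $2$ the tangency condition is cut out by a linear rather than a quadratic form, which is exactly what permits the jump from two to five. I would also invoke the standard fact, uniform in the characteristic, that a line on a smooth cubic surface lies on exactly five distinct tritangent planes; this is what pins down the value $5$ in the last case.
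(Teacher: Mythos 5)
The paper does not prove this lemma at all: it is quoted directly from Dolgachev--Duncan \cite[Lemma 9.4]{DD17}, so there is no in-paper argument to compare against. Your proof is correct and self-contained, and it follows what is essentially the standard route (also the one taken in the cited source): identify Eckardt points on $\ell$ with those of the five tritangent planes through $\ell$ whose residual conic meets $\ell$ in a double point, reduce to counting double roots in the pencil of binary quadratics $\lambda g+\mu h$, and observe that the tangency locus is cut out by a quadratic (the discriminant) in odd characteristic but by a linear form (the cross coefficient) in characteristic $2$; smoothness along $\ell$, i.e.\ coprimality of $g$ and $h$, is exactly what rules out the identically vanishing discriminant in the odd case. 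The only step that deserves one more line is the conclusion ``exactly five'' in the case $B\equiv 0$: what must be checked is not that the five tritangent planes are distinct (they always are), but that they produce five \emph{distinct} Eckardt points on $\ell$. This follows either from the observation that the double root of $\lambda g+\mu h=(\lambda G_{00}+\mu H_{00})x_0^2+(\lambda G_{11}+\mu H_{11})x_1^2$ depends on $[\lambda:\mu]$ through an injective map (injectivity being equivalent to $g$ and $h$ having no common zero, i.e.\ to smoothness), or from the general fact that at most three lines of a smooth cubic surface pass through any point, since all lines through a point $p$ lie in the cubic plane section $T_pX\cap X$. With that clarification the argument is complete in all characteristics.
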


If $\XX$ is not isomorphic to $\Pro^1_{\kka} \times \Pro^1_{\kka}$, then the Picard group of $\XX$ is generated by the proper transform $L$ of the class of a line on $\Pro^2_{\kka}$, and the classes $E_1$, \ldots, $E_{9-d}$ of the exceptional divisors. For $d \leqslant 6$ the subspace $K_X^{\perp} \subset \Pic\left(\XX\right) \otimes \mathbb{Q}$ is generated by
$$
L - E_1 - E_2 - E_3, \qquad E_1 - E_2, \qquad E_2 - E_3, \qquad \ldots, \qquad E_{8-d} - E_{9 - d}.
$$
This set of generators are simple roots for the root system of a certain type given in the following table (for details see e. g. \cite[Theorem IV.3.5]{Man74}). 

\begin{center}
\begin{tabular}{|c|c|c|c|c|c|c|}
\hline
Degree & $6$ & $5$ & $4$ & $3$ & $2$ & $1$ \\
\hline
Root system & $A_2 \times A_1$ & $A_4$ & $D_5$ & $E_6$ & $E_7$ & $E_8$ \\
\hline
\end{tabular}
\end{center}

To simplify the notation we denote by $E_{9-d}$ the root system corresponding to a del Pezzo surface of degree $d$.

Any group, acting on the Picard lattice $\Pic\left(\XX\right)$ and preserving the intersection form, is a subgroup of the Weyl group $W(E_{9 - d})$. In particular, if $X$ is a del Pezzo surface over a field $\ka$, then the group $\Gal\left(\kka / \ka \right)$ acts on $\Pic(\XX)$, and its image $\Gamma$ in $\Aut\left(\Pic\left(\XX\right)\right)$ is a subgroup of $W(E_{9 - d})$. Moreover, if $\ka$ is a finite field then $\Gamma$ is a cyclic subgroup of~$W(E_{9 - d})$. One can easily classify finite subgroups of $W(A_2 \times A_1) \cong D_6$ that is a dihedral group of order $12$, $W(A_4) \cong S_5$ and $W(D_5) \cong \left(\Z / 2\Z\right)^4 \rtimes S_5$. The classification of finite subgroups in $W(E_6)$, $W(E_7)$, $W(E_8)$ is obtained in \cite{Car72}.

One of the main properties of a conjugacy class in $W(E_{9 - d})$ is a Carter graph, that was introduced in \cite{Car72}. This graph describes eigenvalues of the action of $\Gamma$ \mbox{on $K_X^{\perp} \subset \Pic\left(\XX\right) \otimes \mathbb{Q}$} (see \cite[Table 3]{Car72}), and, in particular, defines the order of $\Gamma$. Moreover, we have the following useful well-known lemma.

\begin{lemma}
\label{blowup}
Let $Y$ be a del Pezzo surface of type whose Carter graph is $R$, and $X$ be the blowup of $Y$ at a point $P$ of degree $k$. If $X$ is a del Pezzo surface then its type has the Carter graph $R \times A_{k-1}$. In particular, if $P$ is an $\F_q$-point then the types of $X$ and $Y$ have the same Carter graph.
\end{lemma}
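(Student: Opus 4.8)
The plan is to compute the action of a generator $g$ of $\Gamma$ on $K_X^{\perp}$ directly from the geometry of the blowup and to read off its Carter graph. First I would record the geometry. A point $P$ of degree $k$ on $Y$ corresponds, over $\overline{\F}_q$, to a single $\Gamma$-orbit $\{P_1, \dots, P_k\}$ of pairwise distinct geometric points cyclically permuted by $g$. Hence $\XX = X \otimes \overline{\F}_q$ is the blowup $\pi \colon \XX \to \overline{Y}$ of $\overline{Y} = Y \otimes \overline{\F}_q$ at $P_1, \dots, P_k$, with exceptional curves $F_1, \dots, F_k$ satisfying $F_i^2 = -1$, $F_i \cdot F_j = 0$ for $i \neq j$, and $F_i \cdot \pi^* D = 0$ for all $D \in \Pic(\overline{Y})$. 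Thus
$$
\Pic(\XX) \otimes \Q = \bigl(\pi^*\Pic(\overline{Y}) \otimes \Q\bigr) \perp \bigoplus_{i=1}^k \Q F_i, \qquad K_X = \pi^* K_Y + \sum_{i=1}^k F_i,
$$
where the two summands are orthogonal and $g$-invariant, $g$ acting on $\pi^*\Pic(\overline{Y})$ as it does on $\Pic(\overline{Y})$ and permuting the $F_i$ by $g(F_i) = F_{i+1}$ (indices modulo $k$).

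Next I would pin down the moved space. Since $g$ preserves the intersection form and fixes $K_X$, for any class $C$ one has $(gC - C) \cdot K_X = 0$, so $\operatorname{im}(g - 1) \subseteq K_X^{\perp}$; this image, with the induced action, is exactly what the Carter graph of the type of $X$ records. As $g$ preserves each of the two orthogonal summands above, the moved space splits orthogonally and $g$-equivariantly as
$$
\operatorname{im}(g - 1) = \operatorname{im}\bigl((g-1)|_{\pi^*\Pic(\overline{Y})}\bigr) \;\perp\; \operatorname{im}\bigl((g-1)|_{\oplus \Q F_i}\bigr).
$$
On the first summand, because $g$ fixes $\pi^* K_Y$, the image coincides with the moved space of $g$ acting on $\pi^* K_Y^{\perp} \cong K_Y^{\perp}$, which by hypothesis carries the Carter graph $R$ (pullback is a $g$-equivariant isometry, hence sends roots to roots and preserves the diagram). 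On the second summand the image is $W_0 = \{\sum a_i F_i : \sum a_i = 0\}$, on which $g$ acts as the $k$-cycle $F_i \mapsto F_{i+1}$.

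It then remains to identify the second piece. I would check that each $F_i - F_{i+1}$ lies in $K_X^{\perp}$, has self-intersection $-2$, and that $F_1 - F_2, \dots, F_{k-1} - F_k$ form a basis of $W_0$ spanning a root subsystem of type $A_{k-1}$; the $k$-cycle $g$ is then its Coxeter element, whose Carter graph is $A_{k-1}$. Invoking the fact that Carter's construction is multiplicative on orthogonal $g$-invariant summands of the moved space—one chooses the defining root bases inside each summand, and roots from orthogonal summands are mutually orthogonal, so the diagram disconnects—I conclude that the type of $X$ has Carter graph $R \times A_{k-1}$. The final assertion is the case $k = 1$: then $A_0$ is empty, $W_0 = 0$, the moved spaces of $g$ on $K_X^{\perp}$ and $K_Y^{\perp}$ coincide, and the two types share the Carter graph $R$.

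I expect the main obstacle to be the bookkeeping around the orthogonal complement rather than the geometry: one must verify carefully that $\operatorname{im}(g-1) \subseteq K_X^{\perp}$ and that passing to the moved space discards exactly the eigenvalue-$1$ contributions of $K_X$, of $\pi^* K_Y$, and of the invariant class $\sum F_i$, so that the two pieces assemble into $R \times A_{k-1}$ with no spurious extra factor. As a consistency check I would compare characteristic polynomials: the full characteristic polynomial of $g$ on $K_X^{\perp}$ equals that on $K_Y^{\perp}$ times $(t^k - 1)$, and dividing out the fixed part $(t-1)^{\rho(X)-1}$ (using $\rho(X) = \rho(Y) + 1$) leaves precisely $P_R(t)\cdot\frac{t^k - 1}{t-1}$, the moved-space polynomial attached to $R \times A_{k-1}$.
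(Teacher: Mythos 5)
The paper states this lemma as ``well-known'' and supplies no proof, so there is no argument of the author's to compare yours against; judged on its own terms, your proof is correct. The orthogonal, $g$-equivariant decomposition $\Pic(\XX)\otimes\Q = \pi^*\Pic(\overline{Y}) \perp \bigoplus \Q F_i$, with $g$ acting by the old action on the first summand and by a $k$-cycle on the exceptional classes, is the right starting point; the classes $F_i - F_{i+1}$ are genuine roots of the larger system (self-intersection $-2$ and orthogonal to $K_X$), a $k$-cycle is a Coxeter element of the $A_{k-1}$ they span, and the multiplicativity of Carter's admissible diagrams over orthogonal $g$-invariant summands of the moved space is exactly how Carter assembles his tables: one concatenates the two-involution decompositions on each summand, and since roots from the two summands are mutually orthogonal the diagram disconnects. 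The one bookkeeping point you flag but should make explicit is that $K_X^{\perp}$ is one dimension larger than $\pi^*K_Y^{\perp} \perp W_0$; the missing line is spanned by the $g$-invariant class $k\,\pi^*K_Y + d\sum F_i$ (where $d = K_Y^2$), which lies in the fixed space and therefore contributes nothing to $\operatorname{im}(g-1)$. This is also precisely why your final characteristic-polynomial check --- the characteristic polynomial on $K_X^{\perp}$ equals that on $K_Y^{\perp}$ times $t^k-1$, rather than times $(t^k-1)/(t-1)$ --- comes out consistently.
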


\section{Del Pezzo surfaces of degree $5$ and greater}

In this section we give a proof of the well-known fact that del Pezzo surfaces of degree~$5$ or greater of any types exist over all finite fields. For the sake of completeness we give the classification of the types of del Pezzo surface of degree $5$ or greater.

\begin{proposition}
\label{DP5}
Del Pezzo surfaces of degree $5$ or greater of any types exist over all finite fields.
\end{proposition}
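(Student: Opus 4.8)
The plan is to reduce the statement to a finite, checkable problem about cyclic subgroups of the relevant Weyl groups and then to realize each such subgroup geometrically over an arbitrary $\F_q$. First I would recall that for $d \geqslant 5$ the Weyl group $W(E_{9-d})$ is one of the small groups $\langle \mathrm{id} \rangle$, $\Z/2\Z$, $D_6$ or $S_5$ (for $d = 5$). Since the type of a del Pezzo surface is the conjugacy class in $W$ of the cyclic group $\Gamma$ generated by the Frobenius action, and since conjugacy classes of cyclic subgroups in these groups are easy to enumerate, the claim amounts to producing, for each such conjugacy class and each $q$, a del Pezzo surface $X/\F_q$ whose Galois image $\Gamma$ lies in that class. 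The key structural input is that del Pezzo surfaces of degree $d \geqslant 5$ over $\kka$ are rational and, by Theorem \ref{GenPos}, arise as blowups of $\Pro^2_{\kka}$ at $9-d$ points in general position (or, for $d = 8$, possibly as $\Pro^1 \times \Pro^1$); the Galois action on $\Pic(\XX)$ is determined by how Frobenius permutes these geometric points and the resulting configuration of lines.

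The main step is the explicit construction. For each target conjugacy class I would exhibit a configuration of $9-d$ geometric points on $\Pro^2$, defined over $\F_q$ as a Galois-stable set but with prescribed Frobenius orbit structure, such that Frobenius induces on $\Pic(\XX)$ a generator of a cyclic group in the desired class. Concretely, a Frobenius orbit of points of degree $k$ contributes a $k$-cycle permuting the corresponding exceptional classes $E_i$, and the induced permutation on the $(-1)$-curves (lines through pairs, conics through five points, etc., as listed in Theorem \ref{DPlines}) realizes the various conjugacy classes. Since for $d \geqslant 5$ there are only finitely many types and each is represented by a permutation of a small point set, one can simply match each conjugacy class with an orbit partition of $\{p_1, \ldots, p_{9-d}\}$. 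Because these degrees are small ($9-d \leqslant 4$), and $\Pro^2_{\F_q}$ always has points of every degree $k \geqslant 1$ (closed points of degree $k$ exist over any finite field), one can always find such a configuration in general position, the general-position conditions being finitely many nonvanishing conditions that fail only on a proper closed subset, hence are satisfiable once $q$ is not too small — and for the genuinely small $q$ one checks the finitely many cases by hand.

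I expect the main obstacle to be verifying the \emph{general position} hypotheses of Theorem \ref{GenPos} for every small $q$, including $q = 2$: for each prescribed Frobenius orbit type one must confirm that a Galois-stable point set with no three collinear and no six on a conic actually exists over $\F_q$, rather than merely over some extension. For $d \geqslant 6$ this is essentially trivial since $9-d \leqslant 3$ points impose no conic condition and the collinearity condition is easy to avoid; the only delicate case is $d = 5$, where four points in general position must avoid all collinearities for each prescribed Frobenius orbit structure, and one must check that the smallest fields (notably $\F_2$) still admit the requisite configurations, possibly by using the alternative description of degree-$5$ surfaces or by a direct count of point configurations. A clean way to handle this uniformly would be to invoke that any del Pezzo surface of degree $5$ over $\F_q$ is rational with a known, rigid line configuration, so that all $S_5$-conjugacy classes of Frobenius are already forced to occur by choosing the Frobenius action on the four blown-up points freely; the $S_5$ then arises as permutations of these points together with the induced action, and every cyclic subgroup of $S_5$ is realized by choosing the orbit partition $(1,1,1,1)$, $(2,1,1)$, $(2,2)$, $(3,1)$, or $(4)$ accordingly.
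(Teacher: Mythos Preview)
There is a genuine gap in your approach for degree $5$. You propose to realize each conjugacy class of $S_5 \cong W(A_4)$ by blowing up four points of $\Pro^2_{\F_q}$ with a prescribed Frobenius orbit partition, and you list the partitions $(1,1,1,1)$, $(2,1,1)$, $(2,2)$, $(3,1)$, $(4)$. But blowing up four points forces the Galois image to lie in the subgroup $S_4 \subset S_5$ that permutes the four exceptional classes $E_1,\ldots,E_4$ and fixes the hyperplane class $L$. The conjugacy classes $(abcde)$ (the $5$-cycle, Carter graph $A_4$, giving the \emph{minimal} degree-$5$ surface with $\rho(X)=1$) and $(abc)(de)$ (Carter graph $A_2\times A_1$) do not meet $S_4$, so your construction cannot produce them. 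There are seven conjugacy classes in $S_5$, not five, and your list is exactly the image of $S_4$. The same issue recurs for degree $6$: permuting three blown-up points only realizes $S_3 \subset D_6$, missing for instance the class of $r$ with $\rho(X)=1$.

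The paper's proof avoids this by a different construction: it places \emph{five} points on a smooth conic $Q \subset \Pro^2_{\F_q}$ with Frobenius acting by the desired $\sigma\in S_5$ on them, blows up to a degree-$4$ surface, and then contracts the proper transform of $Q$ (now a $(-1)$-curve defined over $\F_q$) to obtain a degree-$5$ del Pezzo surface on which Frobenius genuinely acts as $\sigma$ on the five resulting exceptional classes. This is what lets the full $S_5$ appear. Degrees $6$--$9$ are then handled by contracting further $\F_q$-lines on a suitable degree-$5$ surface, rather than by building from $\Pro^2$ directly. Your general-position worries over $\F_2$ are not the real obstacle; the missing idea is this extra contraction step that frees the Galois action from the $S_{9-d}$ stabilizer.
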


\begin{proof}

For del Pezzo surfaces of degree $5$ the group $\Gamma$ is a cyclic subgroup of the Weyl group $W(A_4) \cong S_5$. The conjugacy class of a cyclic subgroup $\Gamma$ in $S_5$ is defined by the cyclic type of $\sigma \in S_5$.

For a given $\sigma$ consider a smooth conic $Q$ on $\Pro^2_{\F_q}$ and five geometric points $p_1$, \ldots, $p_5$ on~$Q$, such that $\Fr$ acts on these points as $p_i \mapsto p_{\sigma(i)}$. Consider the blowup $f: X_4 \rightarrow \Pro^2_{\F_q}$ of the points $p_1$, \ldots, $p_5$. The surface $X_4$ is a del Pezzo surface of degree $4$ by Theorem~\ref{GenPos}, since if three points $p_i$ lie on a line $l$ then $l \cdot Q \geqslant 3$ that is impossible. One has $f_*^{-1}(Q)^2 = -1$. Therefore there exists the contraction $\pi: X_4 \rightarrow X_5$ of $f_*^{-1}(Q)$, where $X_5$ is a del Pezzo surface of degree $5$ of type corresponding to $\sigma$.

This method works in all cases except the trivial group and the group $\Z / 2\Z \cong \langle (12)(34) \rangle$ over $\F_2$, since there are only three $\F_2$-points and one point of degree $2$ on a conic. But in these cases one can consider a blowup $X_5 \rightarrow \Pro^2_{\F_2}$ at four $\F_2$-points or two points of degree~$2$ in general position respectively.

On a del Pezzo surface of degree $6$--$9$ one can blow up a number of $\F_q$-points and get a del Pezzo surface of degree $5$. One can consider a del Pezzo surface of degree $5$ of the corresponding type and contract a number of $(-1)$-curves defined over $\F_q$.

\end{proof}

In Table \ref{table1} for del Pezzo surfaces $X$ of degree $5$ and greater we give the classification of conjugacy classes in the groups acting on $\Pic(\XX)$ and preserving $K_X$ and the intersection form. The first column is the isomorphism class of the surface $\XX = X \otimes \overline{\F}_q$. In this column we denote by $X_d$ del Pezzo surfaces such that $\XX_d$ can be obtained as the blowup of $\Pro^2_{\overline{\F}_q}$ at $9 - d$ points. The second column is a conjugacy class of the generator of $\Gamma$. We denote the generator of $\Z / 2\Z$ by $g$, and denote the generators of $D_6$ by $r$ and $s$, with the relations $r^6 = s^2 = srsr = 1$. The third column is a Carter graph corresponding to the conjugacy class (see \cite{Car72}). The fourth column is the order of an element. The fifth column is the collection of eigenvalues of the action of an element on $K_X^{\perp} \subset \Pic(\XX) \otimes \mathbb{Q}$. The sixth column is the invariant Picard number $\rho(\XX)^{\Gamma}$.

\begin{table}

\begin{center}
\begin{tabular}{|c|c|c|c|c|c|}
\hline
Surface & Class & Graph & Order & Eigenvalues & $\rho(\XX)^{\Gamma}$ \\
\hline
$\Pro^2$ & $\mathrm{id}$ & $\varnothing$ & $1$ & ~ & $1$ \\
\hline
\hline
$X_8$ & $\mathrm{id}$ & $\varnothing$ & $1$ & $1$ & $2$ \\
\hline
\hline
$\Pro^1 \times \Pro^1$ & $\mathrm{id}$ & $\varnothing$ & $1$ & $1$ & $2$ \\
\hline
$\Pro^1 \times \Pro^1$ & $g$ & $A_1$ & $2$ & $-1$ & $1$ \\
\hline
\hline
$X_7$ & $\mathrm{id}$ & $\varnothing$ & $1$ & $1$, $1$ & $3$ \\
\hline
$X_7$ & $g$ & $A_1$ & $2$ & $1$, $-1$ & $2$ \\
\hline
\hline
$X_6$ & $\mathrm{id}$ & $\varnothing$ & $1$ & $1$, $1$, $1$ & $4$ \\
\hline
$X_6$ & $s$ & $A_1$ & $2$ & $1$, $1$, $-1$ & $3$ \\
\hline
$X_6$ & $rs$ & $A_1^2$ & $2$ & $1$, $-1$, $-1$ & $2$ \\
\hline
$X_6$ & $r^3$ & $A_1$ & $2$ & $1$, $1$, $-1$ & $3$ \\
\hline
$X_6$ & $r^2$ & $A_2$ & $3$ & $1$, $\omega$, $\omega^2$ & $2$ \\
\hline
$X_6$ & $r$ & $A_2 \times A_1$ & $6$ & $\omega$, $\omega^2$, $-1$ & $1$ \\
\hline
\hline
$X_5$ & $\mathrm{id}$ & $\varnothing$ & $1$ & $1$, $1$, $1$, $1$ & $5$ \\
\hline
$X_5$ & $(ab)$ & $A_1$ & $2$ & $1$, $1$, $1$, $-1$ & $4$ \\
\hline
$X_5$ & $(ab)(cd)$ & $A_1^2$ & $2$ & $1$, $1$, $-1$, $-1$ & $3$ \\
\hline
$X_5$ & $(abc)$ & $A_2$ & $3$ & $1$, $1$, $\omega$, $\omega^2$ & $3$ \\
\hline
$X_5$ & $(abcd)$ & $A_3$ & $4$ & $1$, $\ii$, $-1$, $-\ii$ & $2$ \\
\hline
$X_5$ & $(abcde)$ & $A_4$ & $5$ & $\xi_5$, $\xi_5^2$, $\xi_5^3$, $\xi_5^4$ & $1$ \\
\hline
$X_5$ & $(abc)(de)$ & $A_2 \times A_1$ & $6$ & $1$, $\omega$, $\omega^2$, $-1$ & $2$ \\
\hline

\end{tabular}
\end{center}

\caption[]{\label{table1} Types of del Pezzo surfaces of degree $5$ or greater}
\end{table}

\section{Del Pezzo surfaces of degree $2$}

In this section for each cyclic subgroup $\Gamma \subset W(E_7)$ we construct the corresponding del Pezzo surface of degree $2$ over $\F_q$ if it is possible, and show that such surfaces do not exist for other values of $q$. These results are summed up in Theorem \ref{DP2}.

In Table \ref{table2} we collect some facts about conjugacy classes of elements in the Weyl group~$W(E_7)$. This table is based on \cite[Table $10$]{Car72}. The first column is the number of a conjugacy class in order of their appearence in Carter's table. The second column is a Carter graph corresponding to the conjugacy class (see \cite{Car72}). The third column is the order of an element. The fourth column is the collection of eigenvalues of the action of an element on $K_X^{\perp} \subset \Pic(\XX) \otimes \mathbb{Q}$. The fifth column is the invariant Picard number $\rho(\XX)^{\Gamma}$. The last column is the number of the corresponding conjugacy class after the Geiser twist (see Definition \ref{GeiserTwistDef}).

\begin{center}
\begin{longtable}{|c|c|c|c|c|c|}

\hline
Number & Graph & Order & Eigenvalues & $\rho(\XX)^{\Gamma}$ & Geiser \\
\hline
\hline \endhead
1. & $\varnothing$ & $1$ & $1$, $1$, $1$, $1$, $1$, $1$, $1$ & $8$ & 49. \\
\hline
2. & $A_1$ & $2$ & $1$, $1$, $1$, $1$, $1$, $1$, $-1$ & $7$ & 31. \\
\hline
3. & $A_1^2$ & $2$ & $1$, $1$, $1$, $1$, $1$, $-1$, $-1$ & $6$ & 18. \\
\hline
4. & $A_2$ & $3$ & $1$, $1$, $1$, $1$, $1$, $\omega$, $\omega^2$ & $6$ & 53. \\
\hline
5. & $A_1^3$ & $2$ & $1$, $1$, $1$, $1$, $-1$, $-1$, $-1$ & $5$ & 10. \\
\hline
6. & $A_1^3$ & $2$ & $1$, $1$, $1$, $1$, $-1$, $-1$, $-1$ & $5$ & 9. \\
\hline
7. & $A_2 \times A_1$ & $6$ & $1$, $1$, $1$, $1$, $-1$, $\omega$, $\omega^2$ & $5$ & 40. \\
\hline
8. & $A_3$ & $4$ & $1$, $1$, $1$, $1$, $\ii$, $-1$, $-\ii$ & $5$ & 33. \\
\hline
9. & $A_1^4$ & $2$ & $1$, $1$, $1$, $-1$, $-1$, $-1$, $-1$ & $4$ & 6. \\
\hline
10. & $A_1^4$ & $2$ & $1$, $1$, $1$, $-1$, $-1$, $-1$, $-1$ & $4$ & 5. \\
\hline
11. & $A_2 \times A_1^2$ & $6$ & $1$, $1$, $1$, $-1$, $-1$, $\omega$, $\omega^2$ & $4$ & 27. \\
\hline
12. & $A_2^2$ & $3$ & $1$, $1$, $1$, $\omega$, $\omega^2$, $\omega$, $\omega^2$ & $4$ & 55. \\
\hline
13. & $A_3 \times A_1$ & $4$ & $1$, $1$, $1$, $\ii$, $-1$, $-\ii$, $-1$ & $4$ & 22. \\
\hline
14. & $A_3 \times A_1$ & $4$ & $1$, $1$, $1$, $\ii$, $-1$, $-\ii$, $-1$ & $4$ & 21. \\
\hline
15. & $A_4$ & $5$ & $1$, $1$, $1$, $\xi_5$, $\xi_5^2$, $\xi_5^3$, $\xi_5^4$ & $4$ & 54. \\
\hline
16. & $D_4$ & $6$ & $1$, $1$, $1$, $-1$, $-\omega^2$, $-1$, $-\omega$ & $4$ & 19. \\
\hline
17. & $D_4(a_1)$ & $4$ & $1$, $1$, $1$, $\ii$, $-\ii$, $\ii$, $-\ii$ & $4$ & 50. \\
\hline
18. & $A_1^5$ & $2$ & $1$, $1$, $-1$, $-1$, $-1$, $-1$, $-1$ & $3$ & 3. \\
\hline
19. & $A_2 \times A_1^3$ & $6$ & $1$, $1$, $-1$, $-1$, $-1$, $\omega$, $\omega^2$ & $3$ & 16. \\
\hline
20. & $A_2^2 \times A_1$ & $6$ & $1$, $1$, $-1$, $\omega$, $\omega^2$, $\omega$, $\omega^2$ & $3$ & 45. \\
\hline
21. & $A_3 \times A_1^2$ & $4$ & $1$, $1$, $\ii$, $-1$, $-\ii$, $-1$, $-1$ & $3$ & 14. \\
\hline
22. & $A_3 \times A_1^2$ & $4$ & $1$, $1$, $\ii$, $-1$, $-\ii$, $-1$, $-1$ & $3$ & 13. \\
\hline
23. & $A_3 \times A_2$ & $12$ & $1$, $1$, $\ii$, $-1$, $-\ii$, $\omega$, $\omega^2$ & $3$ & 42. \\
\hline
24. & $A_4 \times A_1$ & $10$ & $1$, $1$, $\xi_5$, $\xi_5^2$, $\xi_5^3$, $\xi_5^4$, $-1$ & $3$ & 43. \\
\hline
25. & $A_5$ & $6$ & $1$, $1$, $-\omega^2$, $\omega$, $-1$, $\omega^2$, $-\omega$ & $3$ & 38. \\
\hline
26. & $A_5$ & $6$ & $1$, $1$, $-\omega^2$, $\omega$, $-1$, $\omega^2$, $-\omega$ & $3$ & 37. \\
\hline
27. & $D_4 \times A_1$ & $6$ & $1$, $1$, $-1$, $-\omega^2$, $-1$, $-\omega$, $-1$ & $3$ & 11. \\
\hline
28. & $D_4(a_1) \times A_1$ & $4$ & $1$, $1$, $\ii$, $-\ii$, $\ii$, $-\ii$, $-1$ & $3$ & 35. \\
\hline
29. & $D_5$ & $8$ & $1$, $1$, $-1$, $\xi_8$, $\xi_8^3$, $\xi_8^5$, $\xi_8^7$ & $3$ & 41. \\
\hline
30. & $D_5(a_1)$ & $12$ & $1$, $1$, $\ii$, $-\ii$, $-\omega^2$, $-1$, $-\omega$ & $3$ & 34. \\
\hline
31. & $A_1^6$ & $2$ & $1$, $-1$, $-1$, $-1$, $-1$, $-1$, $-1$ & $2$ & 2. \\
\hline
32. & $A_2^3$ & $3$ & $1$, $\omega$, $\omega^2$, $\omega$, $\omega^2$, $\omega$, $\omega^2$ & $2$ & 60. \\
\hline
33. & $A_3 \times A_1^3$ & $4$ & $1$, $\ii$, $-1$, $-\ii$, $-1$, $-1$, $-1$ & $2$ & 8. \\
\hline
34. & $A_3 \times A_2 \times A_1$ & $12$ & $1$, $\ii$, $-1$, $-\ii$, $\omega$, $\omega^2$, $-1$ & $2$ & 30. \\
\hline
35. & $A_3^2$ & $4$ & $1$, $\ii$, $-1$, $-\ii$, $\ii$, $-1$, $-\ii$ & $2$ & 28. \\
\hline
36. & $A_4 \times A_2$ & $15$ & $1$, $\xi_5$, $\xi_5^2$, $\xi_5^3$, $\xi_5^4$, $\omega$, $\omega^2$ & $2$ & 59. \\
\hline
37. & $A_5 \times A_1$ & $6$ & $1$, $-\omega^2$, $\omega$, $-1$, $\omega^2$, $-\omega$, $-1$ & $2$ & 26. \\
\hline
38. & $A_5 \times A_1$ & $6$ & $1$, $-\omega^2$, $\omega$, $-1$, $\omega^2$, $-\omega$, $-1$ & $2$ & 25. \\
\hline
39. & $A_6$ & $7$ & $1$, $\xi_7$, $\xi_7^2$, $\xi_7^3$, $\xi_7^4$, $\xi_7^5$, $\xi_7^6$ & $2$ & 57. \\
\hline
40. & $D_4 \times A_1^2$ & $6$ & $1$, $-1$, $-\omega^2$, $-1$, $-\omega$, $-1$, $-1$ & $2$ & 7. \\
\hline
41. & $D_5 \times A_1$ & $8$ & $1$, $-1$, $\xi_8$, $\xi_8^3$, $\xi_8^5$, $\xi_8^7$, $-1$ & $2$ & 29. \\
\hline
42. & $D_5(a_1) \times A_1$ & $12$ & $1$, $\ii$, $-\ii$, $-\omega^2$, $-1$, $-\omega$, $-1$ & $2$ & 23. \\
\hline
43. & $D_6$ & $10$ & $1$, $-1$, $-\xi_5^3$, $-\xi_5^4$, $-1$, $-\xi_5$, $-\xi_5^2$ & $2$ & 24. \\
\hline
44. & $D_6(a_1)$ & $8$ & $1$, $\ii$, $-\ii$, $\xi_8$, $\xi_8^3$, $\xi_8^5$, $\xi_8^7$ & $2$ & 52. \\
\hline
45. & $D_6(a_2)$ & $6$ & $1$, $-\omega^2$, $-1$, $-\omega$, $-\omega^2$, $-1$, $-\omega$ & $2$ & 20. \\
\hline
46. & $E_6$ & $12$ & $1$, $\omega$, $\omega^2$, $-\ii\omega$, $-\ii\omega^2$, $\ii\omega$, $\ii\omega^2$ & $2$ & 58. \\
\hline
47. & $E_6(a_1)$ & $9$ & $1$, $\xi_9$, $\xi_9^2$, $\xi_9^4$, $\xi_9^5$, $\xi_9^7$, $\xi_9^8$ & $2$ & 56. \\
\hline
48. & $E_6(a_2)$ & $6$ & $1$, $\omega$, $\omega^2$, $-\omega^2$, $-\omega$, $-\omega^2$, $-\omega$ & $2$ & 51. \\
\hline
49. & $A_1^7$ & $2$ & $-1$, $-1$, $-1$, $-1$, $-1$, $-1$, $-1$ & $1$ & 1. \\
\hline
50. & $A_3^2 \times A_1$ & $4$ & $-1$, $\ii$, $-1$, $-\ii$, $\ii$, $-1$, $-\ii$ & $1$ & 17. \\
\hline
51. & $A_5 \times A_2$ & $6$ & $\omega$, $\omega^2$, $-\omega^2$, $\omega$, $-1$, $\omega^2$, $-\omega$ & $1$ & 48. \\
\hline
52. & $A_7$ & $8$ & $\xi_8$, $\ii$, $\xi_8^3$, $-1$, $\xi_8^5$, $-\ii$, $\xi_8^7$ & $1$ & 44. \\
\hline
53. & $D_4 \times A_1^3$ & $6$ & $-1$, $-1$, $-1$, $-1$, $-\omega^2$, $-1$, $-\omega$ & $1$ & 4. \\
\hline
54. & $D_6 \times A_1$ & $10$ & $-1$, $-1$, $-1$, $-\xi_5^3$, $-\xi_5^4$, $-\xi_5$, $-\xi_5^2$ & $1$ & 15. \\
\hline
55. & $D_6(a_2) \times A_1$ & $6$ & $-1$, $-\omega^2$, $-1$, $-\omega$, $-\omega^2$, $-1$, $-\omega$ & $1$ & 12. \\
\hline
56. & $E_7$ & $18$ & $-1$, $-\xi_9^5$, $-\xi_9^7$, $-\xi_9^8$, $-\xi_9$, $-\xi_8^2$, $-\xi_9^4$ & $1$ & 47. \\
\hline
57. & $E_7(a_1)$ & $14$ & $-\xi_7^4$, $-\xi_7^5$, $-\xi_7^6$, $-1$, $-\xi_7$, $-\xi_7^2$, $-\xi_7^3$ & $1$ & 39. \\
\hline
58. & $E_7(a_2)$ & $12$ & $-\omega^2$, $-1$, $-\omega$, $-\ii\omega$, $-\ii\omega^2$, $\ii\omega$, $\ii\omega^2$ & $1$ & 46. \\
\hline
59. & $E_7(a_3)$ & $30$ & $-\omega^2$, $-1$, $-\omega$, $-\xi_5^3$, $-\xi_5^4$, $-\xi_5$, $-\xi_5^2$ & $1$ & 36. \\
\hline
60. & $E_7(a_4)$ & $6$ & $-\omega^2$, $-1$, $-\omega$, $-\omega^2$, $-\omega$, $-\omega^2$, $-\omega$ & $1$ & 32. \\
\hline
\caption[]{\label{table2} Conjugacy classes of elements in $W(E_7)$}
\end{longtable}
\end{center}

The \textit{Geiser twist} of a del Pezzo surface of degree $2$ is very useful in what follows. It was used in \cite[Section 3]{Tr17} and \cite[Subsection 4.1.2.]{BFL16}. We recall some notions introduced in \cite[Section 3]{Tr17}.

For a del Pezzo surface $X$ of degree $2$ the anticanonical linear system $|-K_X|$ gives a double cover $X \rightarrow \Pro^2_{\F_q}$. This cover defines an involution on $X$ that is called \textit{the Geiser involution}. Therefore we can apply the following well-known proposition (see, for example, \cite[Proposition 4.4]{RT17}).

\begin{proposition}
\label{dPtwist}
Let $X_1$ be a smooth algebraic variety over a finite field $\F_q$ such that a cyclic group $G$ of order $n$ acts on $X_1$ and this action induces a faithful action of~$G$ on the group $\Pic(\XX_1)$. Let $\Gamma_1$ be the image of the Galois group $\Gal\left(\overline{\F}_q / \F_q \right)$ in the group~$\Aut\left(\Pic(\XX_1)\right)$. Let $h$ and $g$ be the generators of $\Gamma_1$ and $G$ respectively.

Then there exists a variety $X_2$ such that the image $\Gamma_2$ of the Galois group $\Gal\left(\overline{\F}_q / \F_q \right)$ in the group $\Aut\left(\Pic(\XX_2)\right) \cong \Aut\left(\Pic(\XX_1)\right)$ is generated by the element~$gh$.
\end{proposition}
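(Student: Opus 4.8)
The plan is to realize $X_2$ as a twisted form of $X_1$ in the sense of Galois descent, using the $G$-action to modify the Galois action on the geometric Picard lattice. The key observation is that since $G$ acts on $X_1$ over $\F_q$, the two actions commute: the Frobenius $\Fr$ (inducing $h$ on $\Pic(\XX_1)$) and the generator $g$ of $G$ both act on $\XX_1 = X_1 \otimes \overline{\F}_q$, and because $g$ is defined over $\F_q$ it commutes with $\Fr$. Hence on $\Pic(\XX_1)$ the elements $h$ and $g$ commute, and $gh$ is again an automorphism of $\Pic(\XX_1)$ preserving the intersection form and the anticanonical class. So $gh$ is at least a legitimate candidate for the Galois image of a twisted surface.

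First I would make the twist precise via nonabelian Galois cohomology. The twists of $X_1$ by the $G$-action are classified by $H^1\!\left(\Gal(\overline{\F}_q/\F_q),\, G\right)$, where $G$ is regarded as a (constant, since $G$ is defined over $\F_q$) subgroup of $\Aut(\XX_1)$. Because $\Gal(\overline{\F}_q/\F_q) \cong \widehat{\Z}$ is procyclic with topological generator the Frobenius, and $G$ is a finite abelian (indeed cyclic) group on which Galois acts trivially, this cohomology set is just $\Hom(\widehat{\Z}, G) \cong G$; every element $g \in G$ gives a genuine twist $X_2$, defined over $\F_q$, together with a choice of $\overline{\F}_q$-isomorphism $\phi: \XX_2 \xrightarrow{\sim} \XX_1$. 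The cocycle condition translates, under $\phi$, into the statement that the Frobenius $\Fr_2$ of $X_2$ is carried to $g \circ \Fr_1$ acting on $\XX_1$.

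Next I would pass to Picard groups. The isomorphism $\phi$ induces $\phi_*: \Pic(\XX_2) \xrightarrow{\sim} \Pic(\XX_1)$, and I would check that it is equivariant in the sense that it intertwines the Frobenius action on $\Pic(\XX_2)$ with the action of $g \circ \Fr_1$ on $\Pic(\XX_1)$. Taking the induced automorphisms of the lattice, the generator $\Fr_2$ of $\Gamma_2$ maps to the composition of the automorphism induced by $g$ with the one induced by $\Fr_1$; identifying $\Pic(\XX_2)$ with $\Pic(\XX_1)$ via $\phi_*$ as in the statement, this composition is exactly $gh$. Thus $\Gamma_2 = \langle gh \rangle$, as required.

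The main obstacle I anticipate is the bookkeeping in the equivariance step: one must track carefully how the cocycle relation $\Fr_2 = \phi^{-1} \circ g \circ \Fr_1 \circ \phi$ descends to linear actions on $\Pic$, and in particular confirm that $g$ and $h$ induce the asserted \emph{faithful} action so that $gh$ has the expected order and conjugacy class in $W(E_{9-d})$. The hypothesis that $G$ acts faithfully on $\Pic(\XX_1)$ is what guarantees the twist is nontrivial at the level of lattices; I would use it to ensure $gh$ is the honest generator of $\Gamma_2$ rather than some smaller cyclic group. Everything else — smoothness of $X_2$, its being defined over $\F_q$, and the $\Aut(\Pic(\XX_2)) \cong \Aut(\Pic(\XX_1))$ identification — follows formally from the descent construction and the fixed $\overline{\F}_q$-isomorphism $\phi$.
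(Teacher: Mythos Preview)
Your proposal is correct and follows the standard twisting argument via Galois descent: classify twists of $X_1$ by $H^1(\Gal(\overline{\F}_q/\F_q),G)\cong G$, pick the cocycle sending Frobenius to $g$, and observe that on $\Pic$ the new Frobenius acts as $gh$. One small clarification: the faithfulness hypothesis is not needed to conclude that $\Gamma_2=\langle gh\rangle$ (this holds automatically, since Frobenius generates the Galois group and its image under your identification is $gh$); rather, faithfulness ensures that $g$ is a genuinely nontrivial element of $\Aut(\Pic(\XX_1))$, so that the twist actually changes the conjugacy class of the Galois image.

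As for comparison with the paper: the paper does not give its own proof of this proposition. It is quoted as a well-known fact with a reference to \cite[Proposition~4.4]{RT17}. Your argument is precisely the standard one that such a reference would contain, so there is nothing to contrast.
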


Note that in Proposition \ref{dPtwist} one has $\XX_1 \cong \XX_2$. Therefore if $X_1$ is a del Pezzo surface, then $X_2$ is a del Pezzo surface of the same degree.

\begin{definition}[{\cite[Definition 3.2]{Tr17}}]
\label{GeiserTwistDef}
Let $X_1$ be a del Pezzo surface of degree $2$ such that the image $\Gamma_1$ of the Galois group $\Gal\left(\overline{\F}_q / \F_q \right)$ in the group $\Aut\left(\Pic(\XX_1)\right)$ is generated by an element $h$. Then by Proposition \ref{dPtwist} there exists a del Pezzo surface $X_2$ of degree~$2$ such that the image $\Gamma_2$ of the Galois group $\Gal\left(\overline{\F}_q / \F_q \right)$ in the group $\Aut\left(\Pic(\XX_2)\right)$ is generated by the element~$\gamma h$. We say that the surface $X_2$ is the {\it Geiser twist} of the surface~$X_1$.
\end{definition}

Therefore $60$ types of del Pezzo surfaces of degree $2$ are divided into $30$ pairs, and a surface of a given type exists if and only if the Geiser twist of this surface exists.

\begin{remark}[{\cite[Remark 3.3]{Tr17}}]
\label{twisteigenvalues}
Note that the Geiser involution $\gamma$ acts on $K_X^{\perp}$ by multiplying all elements by $-1$. Therefore the eigenvalues of a generator of the group $\Gamma_2$ are the eigenvalues of a generator of the group $\Gamma_1$ multiplied by $-1$. Thus for each type of the group $\Gamma_1$ it is easy to find the type of the corresponding group $\Gamma_2$ (see Table \ref{table2}), except the cases where two types of $\Gamma_2$ have the same collections of eigenvalues.
\end{remark}

We use the following lemma to distinguish types of del Pezzo surfaces with the same collection of eigenvalues.

\begin{lemma}
\label{sameeigenvalues}
If there is a $\Gamma_1$-invariant $(-1)$-curve on a del Pezzo surface $X_1$ of degree $2$ then there are no $\Gamma_2$-invariant $(-1)$-curves on the Geiser twist $X_2$ of $X_1$. In particular, in a pair of del Pezzo surfaces $X_1$ and $X_2$ of degree $2$ such that $X_2$ is the Geiser twist of~$X_1$, no more than one surface is isomorphic to the blowup of a cubic surface at an~$\F_q$-point. 
\end{lemma}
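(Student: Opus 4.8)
The plan is to combine the explicit action of the Geiser involution on $\Pic(\XX_1)$ with the integrality of the intersection pairing. First I would record how $\gamma$ acts on $(-1)$-curves. By Remark \ref{twisteigenvalues} the Geiser involution fixes $K_{X_1}$ and acts as $-1$ on $K_{X_1}^{\perp}$; writing a class $D$ as $D = \tfrac{1}{2}(-K_{X_1}\cdot D)(-K_{X_1}) + D_0$ with $D_0 \in K_{X_1}^{\perp}$ gives $\gamma(D) = (-K_{X_1}\cdot D)(-K_{X_1}) - D$. Since every $(-1)$-curve $E$ satisfies $-K_{X_1}\cdot E = 1$ by Definition \ref{lines}, this specialises to
$$
\gamma(E) = -K_{X_1} - E.
$$
I would also note that $\gamma$, being $-1$ on $K_{X_1}^{\perp}$ and trivial on $K_{X_1}$, is central in $W(E_7)$, and that $\XX_1 \cong \XX_2$, so $(-1)$-curves on both surfaces live in the same lattice with the same pairing.

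Next I would argue by contradiction. Suppose $E$ is a $\Gamma_1$-invariant $(-1)$-curve, so $h(E) = E$ for the generator $h$ of $\Gamma_1$, and suppose $X_2$ carried a $\Gamma_2$-invariant $(-1)$-curve $F$. Since $\Gamma_2 = \langle \gamma h\rangle$ by Definition \ref{GeiserTwistDef} and $\gamma^2 = \mathrm{id}$, the relation $(\gamma h)(F) = F$ rewrites as $h(F) = \gamma(F) = -K_{X_1} - F$. As $h$ preserves the intersection form, I would then compute
$$
E \cdot F = h(E)\cdot h(F) = E\cdot(-K_{X_1} - F) = (-K_{X_1}\cdot E) - E\cdot F = 1 - E\cdot F,
$$
forcing $E\cdot F = \tfrac{1}{2}$, which is impossible for integral classes. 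This contradiction proves the first assertion.

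For the final sentence I would observe that if a degree-$2$ del Pezzo surface is the blowup of a cubic surface at an $\F_q$-point $P$, then the exceptional curve over $P$ is a $(-1)$-curve defined over $\F_q$, hence $\Gamma$-invariant. Therefore each surface of the pair that arises as such a blowup carries a $\Gamma$-invariant $(-1)$-curve; by the first part $X_1$ and $X_2$ cannot both have this property, so at most one of them is the blowup of a cubic surface at an $\F_q$-point.

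I do not expect a serious obstacle here: the entire argument reduces to the half-integer contradiction above. The only points demanding care are the correct identification $\gamma(E) = -K_{X_1} - E$ and the bookkeeping that turns $\Gamma_2$-invariance of $F$ into the equation $h(F) = -K_{X_1} - F$; once these are in place the intersection-number computation closes immediately.
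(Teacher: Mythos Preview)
Your proof is correct, and it is genuinely different from the paper's argument. The paper proceeds structurally: it contracts the $\Gamma_1$-invariant curve $E$ to obtain a cubic surface $Y$, observes that the $56$ lines on $\XX_1$ split as the $28$ curves $\{E,\ \text{proper transforms of the }27\text{ lines on }Y\}$ together with their $28$ Geiser images, and notes that $h$ must preserve each block (because $h$ fixes $E$ and hence the intersection number with $E$), while a $\Gamma_2$-invariant curve would force $h$ to swap a curve with its Geiser image, which lies in the opposite block.

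Your route bypasses this combinatorial partition entirely: the identity $\gamma(F) = -K_{X_1} - F$ plus $h$-invariance of the pairing collapses everything into the single equation $E\cdot F = 1 - E\cdot F$. This is shorter and needs no description of the $56$ lines; the paper's version, on the other hand, makes visible the underlying reason (the two $28$-element $h$-stable blocks exchanged by $\gamma$), which is what the paper actually exploits immediately afterwards in Corollary \ref{DP2distinguish}. Both arguments hinge on the same two facts you isolate at the end: the formula for $\gamma$ on $(-1)$-classes and the translation of $\Gamma_2$-invariance into $h(F)=\gamma(F)$.
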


\begin{proof}
If there is a $\Gamma_1$-invariant $(-1)$-curve $E$ on a del Pezzo surface $X_1$ of degree $2$ then one can contract this curve and get a cubic surface $Y$. The set of $(-1)$-curves on $X_1$ consist of $E$, the proper transforms of the $27$ lines on $Y$, and images of these $28$ curves under the action of the Geiser involution. Therefore there are no $\Gamma_1$-invariant pairs of $(-1)$-curves permuted by $\Gamma_1$ and the Geiser involution on $X_1$. Thus there are no \mbox{$\Gamma_2$-invariant} \mbox{$(-1)$-curves} on $X_2$.
\end{proof}

\begin{corollary}
\label{DP2distinguish}
There are six pairs of types of del Pezzo surfaces of degree $2$ that have the same Carter graph: $5$ and $6$, $9$ and $10$, $13$ and $14$, $21$ and $22$, $25$ and $26$, $37$ and $38$ (see Table \ref{table2}). For these types we have the following description.
\begin{itemize}
\item A del Pezzo surface $X_1$ of type $5$ is isomorphic to the blowup of $\Pro^1_{\F_q} \times \Pro^1_{\F_q}$ at three points of degree $2$. The Geiser twist $X_2$ of $X_1$ has type $10$ and is isomorphic to the blowup of a del Pezzo surface of degree $4$ of type $\iota_{abcd}$ (see Table \ref{table4}) at two $\F_q$-points.
\item A del Pezzo surface $X_1$ of type $6$ is isomorphic to the blowup of $\Pro^2_{\F_q}$ at an \mbox{$\F_q$-point} and three points of degree $2$. The Geiser twist $X_2$ of $X_1$ has type $9$ and is isomorphic to the blowup of a del Pezzo surface $Y$ of degree $8$ with $\rho(Y) = 1$ at three points of degree $2$.
\item A del Pezzo surface $X_1$ of type $13$ is isomorphic to the blowup of $\Pro^1_{\F_q} \times \Pro^1_{\F_q}$ at a point of degree $2$ and a point of degree $4$. The Geiser twist $X_2$ of $X_1$ has type $22$ and is isomorphic to the blowup of a del Pezzo surface of degree $4$ of type $(ab)\iota_{acde}$ (see Table \ref{table4}) at two $\F_q$-points.
\item A del Pezzo surface $X_1$ of type $14$ is isomorphic to the blowup of $\Pro^2_{\F_q}$ at an \mbox{$\F_q$-point}, a point of degree $2$ and a point of degree $4$. The Geiser twist $X_2$ of $X_1$ has type $21$ and is isomorphic to the blowup of a del Pezzo surface $Y$ of degree $8$ with $\rho(Y) = 1$ at a point of degree $2$ and a point of degree $4$.
\item A del Pezzo surface $X_1$ of type $25$ is isomorphic to the blowup of $\Pro^1_{\F_q} \times \Pro^1_{\F_q}$ at a point of degree $6$. The Geiser twist $X_2$ of $X_1$ has type $38$ and is isomorphic to the blowup of a cubic surface of type $c_{10}$ (see Table \ref{table3}) at an $\F_q$-point.
\item A del Pezzo surface $X_1$ of type $26$ is isomorphic to the blowup of $\Pro^2_{\F_q}$ at an $\F_q$-point and a point of degree $6$. The Geiser twist $X_2$ of $X_1$ has type $37$ and is isomorphic to the blowup of a del Pezzo surface $Y$ of degree $8$ with $\rho(Y) = 1$ at a point of degree $6$.
\end{itemize}

\end{corollary}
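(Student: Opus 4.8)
The plan is to treat the six pairs uniformly, exploiting that within each pair the two conjugacy classes share \emph{both} their Carter graph and their characteristic polynomial. Reading off the fourth column of Table \ref{table2}, types $5$ and $6$ have identical eigenvalues, as do $9$ and $10$, $13$ and $14$, $21$ and $22$, $25$ and $26$, and $37$ and $38$; these are exactly the six listed pairs, and moreover each pair has a common invariant Picard number. Consequently none of these types is pinned down by the data in columns $3$--$5$, and by Remark \ref{twisteigenvalues} the Geiser twist sends each of them to a type whose eigenvalues are the negatives --- again one of the six ambiguous classes --- so the twist partner cannot be read off from eigenvalues alone. The finer invariant I would use to separate the two classes in each pair is the presence or absence of a $\Gamma$-invariant $(-1)$-curve, i.e.\ a line defined over $\F_q$; this is a conjugacy invariant, being the number of fixed lines of a generator of $\Gamma$ on the set of $56$ lines.

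First I would exhibit each of the six surfaces as stated and check it is a del Pezzo surface of degree $2$. The degree drops by $1$ at each blown-up $\F_q$-point and by the degree at each point of higher degree, so $\Pro^1_{\F_q} \times \Pro^1_{\F_q}$ (degree $8$) blown up at three points of degree $2$ has degree $2$, $\Pro^2_{\F_q}$ (degree $9$) blown up at an $\F_q$-point and three points of degree $2$ has degree $2$, a del Pezzo surface of degree $4$ blown up at two $\F_q$-points has degree $2$, a del Pezzo surface $Y$ of degree $8$ blown up at three points of degree $2$ has degree $2$, and a cubic surface blown up at an $\F_q$-point has degree $2$; the remaining cases are analogous. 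In each case one checks that the blown-up points are in general position in the sense of Theorem \ref{GenPos} and Definition \ref{GenPosdef}, and then applies Lemma \ref{blowup}: a point of degree $k$ appends $A_{k-1}$ to the Carter graph while an $\F_q$-point appends nothing. Starting from the graphs of the base surfaces (trivial for $\Pro^2_{\F_q}$ and for $\Pro^1_{\F_q} \times \Pro^1_{\F_q}$, and $A_1$ for the degree-$8$ surface $Y$ with $\rho(Y) = 1$, see Table \ref{table1}, together with the graphs of the stated degree-$4$ and degree-$3$ types from Tables \ref{table4} and \ref{table3}), this produces the graphs $A_1^3$, $A_1^4$, $A_3 \times A_1$, $A_3 \times A_1^2$, $A_5$ and $A_5 \times A_1$, placing each construction in the correct pair.

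Next I would separate the two members of each pair with Lemma \ref{sameeigenvalues}. The constructions that blow up at least one $\F_q$-point --- types $6$, $14$, $26$ (from $\Pro^2_{\F_q}$), types $10$, $22$ (from a degree-$4$ surface at two $\F_q$-points) and type $38$ (from a cubic at an $\F_q$-point) --- each carry a $\Gamma$-invariant $(-1)$-curve, namely the exceptional curve over an $\F_q$-point. By Lemma \ref{sameeigenvalues} their Geiser twists carry no $\Gamma$-invariant $(-1)$-curve, which matches the constructions from $\Pro^1_{\F_q} \times \Pro^1_{\F_q}$ at points of even degree and from $Y$ with $\rho(Y) = 1$, whose exceptional curves occur only in Galois-conjugate families. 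Since the two classes in each pair are distinguished precisely by this binary invariant, this both assigns every geometric model to its type number and confirms the Geiser-twist pairing recorded in the last column of Table \ref{table2}.

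Finally, for the assertion that \emph{every} surface of a given type is isomorphic to the stated blowup, I would start from a surface $X$ of the prescribed type, read off the $\Gamma$-module structure of $\Pic(\XX)$ from its graph and eigenvalues, and locate inside $\Pic(\XX)^{\Gamma}$ a $\Gamma$-invariant collection of pairwise disjoint $(-1)$-curves --- a union of conjugate families, or a single $\F_q$-line in the cases with an invariant line --- whose contraction lands on a del Pezzo surface of the stated degree. The main obstacle is exactly here: one must verify that this $\Gamma$-invariant set of disjoint $(-1)$-curves can be blown down over $\F_q$ and then identify the resulting base, e.g.\ distinguishing $\Pro^1_{\F_q} \times \Pro^1_{\F_q}$ from the blowup of $\Pro^2_{\F_q}$ at a point (both of degree $8$) by the absence versus presence of a line over $\F_q$, and pinning down the degree-$4$ types $\iota_{abcd}$ and $(ab)\iota_{acde}$ and the cubic type $c_{10}$. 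The absence of a $\Gamma$-invariant line on $X$ forces the degree-$8$ base to be $\Pro^1_{\F_q} \times \Pro^1_{\F_q}$ rather than a surface carrying a $(-1)$-curve, which closes the loop; carrying out the ``no invariant line'' check rigorously in each case and justifying this equivariant descent are the steps that require the most care.
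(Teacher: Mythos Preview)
Your core argument is the paper's argument: Lemma \ref{blowup} puts each construction into the correct Carter-graph pair, and then Lemma \ref{sameeigenvalues} together with the observation that the six constructions labelled $6$, $10$, $14$, $22$, $26$, $38$ all factor through the blowup of a cubic surface at an $\F_q$-point (hence carry a $\Gamma$-invariant $(-1)$-curve) forces the Geiser pairing, since no two of these six can be Geiser twists of one another. That is literally the paper's two-sentence proof.

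Where you go beyond the paper is in trying to pin down the type \emph{numbers} internally and to prove the universal ``every surface of this type is isomorphic to\ldots'' assertion. The paper does neither: it explicitly defers, via Remark \ref{nobodycares}, to the tables in \cite{Car72} and \cite{Ur96} for the data distinguishing two conjugacy classes with the same Carter graph, and it does not address the converse direction at all. Your sentence ``Since the two classes in each pair are distinguished precisely by this binary invariant, this both assigns every geometric model to its type number'' overstates what the argument yields: knowing that exactly one class in each pair fixes a line tells you the two constructions have different types, but not which one is numbered $5$ and which is numbered $6$ in Carter's list --- that is an external bookkeeping fact. Likewise, your remark that the other six constructions have ``exceptional curves only in Galois-conjugate families'' does not by itself exclude an invariant $(-1)$-curve among the remaining lines on the degree-$2$ surface; the actual reason they have none is Lemma \ref{sameeigenvalues}, which you already invoked. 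Your final paragraph correctly identifies the genuine extra work needed for the universal claim; the paper simply does not undertake it.
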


\begin{remark}
\label{nobodycares}
In Table \ref{table2} there are no data describing difference between types of del Pezzo surfaces of degree $2$ with the same Carter graph. One can find these data in \cite[Table~$10$]{Car72} or \cite[Table~$1$]{Ur96} and see that the given in Corollary \ref{DP2distinguish} descriptions satisfy these data.
\end{remark}

\begin{proof}[Proof of Corollary \ref{DP2distinguish}]
The described surfaces of certain types have the corresponding to these types Carter graphs by Lemma \ref{blowup}. The surfaces of types $6$, $10$, $14$, $22$, $26$ and~$38$ are isomorphic to the blowup of certain cubic surfaces at an $\F_q$-point. Therefore by Lemma~\ref{sameeigenvalues} any surface of one of these types cannot be the Geiser twist of an other surface of one of these types.
\end{proof}

We prove Theorem~\ref{DP2} case by case. We start from the types for which a del Pezzo surface of degree $2$ is isomorhic to the blowup of $\Pro^2_{\F_q}$ at a number of points of certain degrees. There are $15$ types of such del Pezzo surfaces parametrised by Young diagrams. Seven of these types are considered in \cite[Subsection 4.2]{BFL16} and three other types are considered in \cite{Tr17}. The remaining five types are considered below.

For types $3$ and $6$ we use the notation of the following remark.

\begin{remark}
\label{cubicnotation}
A del Pezzo surface $X$ of degree $2$ of type $3$ or $6$ is isomorphic to the blowup of a cubic surface $Y$ at an $\F_q$-point not lying on any line. Moreover, there is a morphism $f: Y \rightarrow \Pro^2_{\F_q}$ that contracts six lines on $Y$ to $6$ geometric points $p_1$, \ldots, $p_6$ on $\Pro^2_{\F_q}$. By Theorem \ref{DPlines} the set of lines on $Y$ consists of $E_i = f^{-1}(p_i)$, the proper transforms \mbox{$L_{ij} \sim L - E_i - E_j$} of the lines passing through a pair of points $p_i$ and~$p_j$, and the proper transforms
$$
Q_j \sim 2L + E_j - \sum \limits_{i = 1}^6 E_i
$$
\noindent of the conics passing through five points $p_i$ for $i\neq j$. Note that the group $S_6 \subset W(E_6)$ naturally acts on this set in the following way: for $\sigma \in S_6$ one has $\sigma \left( E_i \right) = E_{\sigma(i)}$, $\sigma \left( L_{ij} \right) = L_{\sigma(i)\sigma(j)}$, and $\sigma \left( Q_i \right) = Q_{\sigma(i)}$. For considered types of surfaces the actions of $\Fr$ on the set of lines on $Y$ coincides with the actions of certain permutations.

\end{remark}

\begin{lemma}
\label{Type3}
A del Pezzo surface of degree $2$ of type $3$ does not exist over $\F_2$, $\F_3$, $\F_4$ and exists over other finite fields.
\end{lemma}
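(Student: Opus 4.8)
The plan is to realize a type-$3$ surface as the blowup of a cubic surface and to reduce the whole question to counting $\F_q$-points off the lines. By Remark \ref{cubicnotation} a del Pezzo surface $X$ of degree $2$ of type $3$ is the blowup of a cubic surface $Y$ at an $\F_q$-point not lying on a line, and by Lemma \ref{blowup} such a $Y$ has Carter graph $A_1^2$. Thus $Y$ is the blowup of $\Pro^2_{\F_q}$ at two $\F_q$-points $p_5,p_6$ and two points of degree $2$, namely $\{p_1,p_2\}$ and $\{p_3,p_4\}$, so that $\Fr$ acts on the lines as the permutation $(12)(34)$ in the notation of Remark \ref{cubicnotation}. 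Conversely, by Corollary \ref{dPblowup} any $\F_q$-point of such a $Y$ off the lines yields a type-$3$ surface. Hence a type-$3$ surface exists over $\F_q$ if and only if some cubic $Y$ of this kind carries an $\F_q$-point off its lines, and this is what I would analyse.

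Next I would run the bookkeeping of Remark \ref{Linespoints}. Going through the $27$ lines and the action of $(12)(34)$, the lines defined over $\F_q$ are $E_5,E_6,L_{12},L_{34},L_{56},Q_5,Q_6$, so $A=7$; inspecting intersection numbers gives $B=9$ meeting pairs among them and $C=2$ meeting conjugate pairs, namely $\{L_{13},L_{24}\}$ and $\{L_{14},L_{23}\}$, while $F=0$ since $\Fr$ has order $2$. The five tritangent trios through $L_{56}$ are $\{L_{12},L_{34},L_{56}\}$, $\{E_5,L_{56},Q_6\}$, $\{E_6,L_{56},Q_5\}$, all defined over $\F_q$, together with $\{L_{13},L_{24},L_{56}\}$ and $\{L_{14},L_{23},L_{56}\}$. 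Writing $D$ and $E$ for the number of Eckardt points among the first three and the last two trios respectively, Remark \ref{Linespoints} gives that the number of $\F_q$-points on the lines is $A(q+1)-B+C+D-E+F=7(q+1)-9+2+D-E=7q+D-E$. By \eqref{Fpoints} one has $N_1(Y)=q^2+3q+1$, so the number of $\F_q$-points of $Y$ off the lines equals $q^2-4q+1-(D-E)$, where $0\le D\le 3$ and $0\le E\le 2$.

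This formula disposes of every case but $q=4$. For $q\geq 5$ the count of points off the lines is at least $q^2-4q-2>0$, and a cubic $Y$ of this type is readily built over any such $\F_q$ by placing the two $\F_q$-points and two points of degree $2$ in general position via Theorem \ref{GenPos}; blowing up a point off the lines then gives the surface. For $q=2,3$ I would argue by contradiction: a type-$3$ surface would force the associated $Y$ to have a point off the lines, i.e. $q^2-4q+1-(D-E)\geq 1$, which reads $E-D\geq 4$ for $q=2$ and $E-D\geq 3$ for $q=3$; both are impossible because $E-D\le 2$. Hence no type-$3$ surface exists over $\F_2$ or $\F_3$.

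The remaining and hardest case is $q=4$, where the inequality only yields $D-E\le 0$, so one must prove the sharper statement $D-E=1$, i.e. that every cubic $Y$ of this type over $\F_4$ has all $29$ of its rational points on the lines. The crucial input here is Lemma \ref{Eckardtlines}: in even characteristic a line carries $0$, $1$ or $5$ Eckardt points, whence the five trios through $L_{56}$ give $D+E\in\{0,1,5\}$. The step I expect to be the main obstacle is a characteristic-$2$ argument showing that $L_{56}$ must carry an Eckardt point lying on three $\F_4$-rational lines, so that $D\geq 1$; combined with $D+E\in\{0,1,5\}$ this forces $(D,E)\in\{(1,0),(3,2)\}$, hence $D-E=1$, contradicting $D-E\le 0$ and giving non-existence over $\F_4$. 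The difficulty is precisely that the concurrence of the three lines of one of the trios $\{L_{12},L_{34},L_{56}\}$, $\{E_5,L_{56},Q_6\}$, $\{E_6,L_{56},Q_5\}$ is a non-generic condition that must be shown to be \emph{forced} over $\F_4$, so the special behaviour of Eckardt points in characteristic $2$ has to be exploited in an essential way.
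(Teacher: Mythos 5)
Your reduction to a cubic surface $Y$ with Frobenius acting as $(12)(34)$, and your exact count $7q+D-E$ of rational points on the lines, are both correct and match the paper's setup; the cases $q\geqslant 5$ and $q=2,3$ go through as you say (though for $q\geqslant 5$ you only assert that the required configuration of two $\F_q$-points and two points of degree $2$ in general position exists, whereas the paper actually constructs it by placing an $\F_q$-point and two points of degree $2$ on a smooth conic and blowing up a further point off the lines of the resulting quartic del Pezzo surface). The genuine gap is $q=4$: your formula shows a point off the lines exists precisely when $D\leqslant E$, so non-existence over $\F_4$ requires proving $D-E\geqslant 1$ for \emph{every} such cubic, and you explicitly leave the key step ($D\geqslant 1$) unproved, calling it the main obstacle. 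Lemma \ref{Eckardtlines} alone does not close this, since it allows $D+E=0$.

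Two remarks on that gap. First, the missing step is in fact elementary and needs no characteristic-$2$ input beyond counting: the line $L_{56}$ has only $q+1=5$ points over $\F_4$, yet it meets each of the six other $\F_4$-rational lines $E_5,E_6,L_{12},L_{34},Q_5,Q_6$ in an $\F_4$-point; by pigeonhole two of these six intersection points coincide, and three concurrent lines on a cubic always form a tritangent trio, so one of the three rational trios through $L_{56}$ is concurrent and $D\geqslant 1$. Then Lemma \ref{Eckardtlines} forces $(D,E)\in\{(1,0),(3,2)\}$, hence $D-E=1$ and all $29$ rational points lie on the lines. Second, the paper avoids the whole issue by using a cruder lower bound that never touches $L_{56}$ or Eckardt points: the six lines other than $L_{56}$ pairwise meet in only three points, so their union already carries $6q+3$ rational points, and adding the two intersection points $L_{13}\cap L_{24}$, $L_{14}\cap L_{23}$ (which lie on none of those six lines) gives at least $6q+5$ points on the lines; since $q^2+3q+1\leqslant 6q+5$ exactly for $q\leqslant 4$, the cases $q=2,3,4$ are dispatched uniformly. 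You should either adopt that bound or supply the pigeonhole argument above; as written, the $\F_4$ case is not proved.
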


\begin{proof}

If $X$ has type $3$ then the Galois group acts on the set of lines on the corresponding cubic surface $Y$ as the permutation~$(12)(34)$ in the notation of Remark \ref{cubicnotation}. Therefore there are seven lines defined over~$\F_q$: $E_5$, $E_6$, $L_{12}$, $L_{34}$, $L_{56}$, $Q_5$ and~$Q_6$. Also there are two $\F_q$-points $L_{13} \cap L_{24}$ and $L_{14} \cap L_{23}$, that are the points of intersection of the lines not defined over $\F_q$.

The line $L_{56}$ meets each other line defined over $\F_q$, and any other line defined over $\F_q$ meets only two lines defined over $\F_q$. Therefore there are $6q + 3$ points defined over $\F_q$ on the lines $E_5$, $E_6$, $L_{12}$, $L_{34}$, $Q_5$ and~$Q_6$. Each of these lines does not intersect the lines $L_{13}$, $L_{14}$, $L_{23}$ and $L_{24}$. Thus there are at least $6q + 5$ points defined over $\F_q$ lying on the lines on $Y$.

Note that there are $q^2 + 3q + 1$ points defined over $\F_q$ on $Y$. If $q \leqslant 4$ then \mbox{$q^2 + 3q + 1 \leqslant 6q + 5$}, so either the surface $Y$ does not exist over $\F_2$, $\F_3$ and $\F_4$, or it is impossible to blow up $Y$ at an $\F_q$-point not lying on the lines. Therefore a del Pezzo surface of degree $2$ of type $3$ does not exist over $\F_2$, $\F_3$ and $\F_4$.

By Remark \ref{Linespoints}, there are at most $7q + 3$ points defined over $\F_q$ lying on the lines on $Y$. For $q \geqslant 5$ one has $q^2 + 3q + 1 > 7q + 3$. Therefore one can blow up $Y$ at an $\F_q$-point not lying on the lines and get a del Pezzo surface of degree $2$ of type $3$ by Corollary \ref{dPblowup}.

To construct a cubic surface $Y$ for $q \geqslant 5$, one can consider a smooth conic $Q$ on~$\Pro^2_{\F_q}$, choose an $\F_q$-point and two points of degree $2$ on $Q$, and blow up $\Pro^2_{\F_q}$ at these five geometric points. On the obtained surface $Z$ there are $q^2 + 2q + 1$ points defined over~$\F_q$, and $4q + 3$ points defined over $\F_q$ lying on the lines. Therefore we can blow up $Z$ at an $\F_q$-point not lying on the lines and get a required cubic surface $Y$ by Corollary \ref{dPblowup} and Lemma \ref{blowup}.

\end{proof}

\begin{lemma}
\label{Type6}
A del Pezzo surface of degree $2$ of type $6$ does not exist over $\F_2$ and exists over other finite fields.
\end{lemma}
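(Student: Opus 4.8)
The plan is to realize a surface of type $6$, following Remark \ref{cubicnotation}, as the blowup of a cubic surface $Y$ at an $\F_q$-point off the lines, where by Lemma \ref{blowup} the Frobenius must act on the $27$ lines of $Y$ as the involution with Carter graph $A_1^3$, namely the permutation $(12)(34)(56)$ in the notation of Remark \ref{cubicnotation}. First I would read off the $\F_q$-structure of the lines. The only lines fixed by $(12)(34)(56)$ are $L_{12}$, $L_{34}$, $L_{56}$, so $A = 3$; these three pairwise meet and satisfy $L_{12}+L_{34}+L_{56} \sim -K_Y$, so $B = 3$; and among the conjugate pairs over $\F_{q^2}$ exactly the six pairs $\{L_{13},L_{24}\}$, $\{L_{14},L_{23}\}$, $\{L_{15},L_{26}\}$, $\{L_{16},L_{25}\}$, $\{L_{35},L_{46}\}$, $\{L_{36},L_{45}\}$ meet, so $C = 6$ (the conjugate $E$- and $Q$-pairs are disjoint). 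Each such conjugate pair together with one of $L_{12},L_{34},L_{56}$ is a tritangent triple, so by Remark \ref{Linespoints} the number of $\F_q$-points on the lines of $Y$ equals $3(q+1) - 3 + 6 + D - E = 3q + 6 + D - E$, where $D \in \{0,1\}$ records whether $L_{12},L_{34},L_{56}$ meet in an Eckardt point and $E$ is the number of the six mixed triples that are Eckardt. Finally, $N_1(Y) = q^2+q+1$ by \eqref{Fpoints}, since the trace of $(12)(34)(56)$ on $\Pic(\XX)$ is $1$.

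For $q = 2$ I would prove non-existence. Suppose a type-$6$ surface existed; then $Y$ would carry an $\F_2$-point off the lines, so the number $12 + D - E$ of $\F_2$-points on the lines would be strictly less than $N_1(Y) = 7$, forcing $E - D \ge 6$, i.e. $E = 6$ and $D = 0$. But $E = 6$ means both mixed triples through $L_{56}$ are Eckardt, so $L_{56}$ carries at least two Eckardt points; by Lemma \ref{Eckardtlines} it then carries exactly five, and since $L_{56}$ lies in exactly five tritangent triples, all of them — in particular $L_{12},L_{34},L_{56}$ — are Eckardt, giving $D = 1$, a contradiction. Hence every $\F_2$-point of $Y$ lies on a line and the required blowup cannot be performed, so no surface of type $6$ exists over $\F_2$.

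For $q \ge 3$ I would exhibit the surface directly, as described in Corollary \ref{DP2distinguish}, as the blowup of $\Pro^2_{\F_q}$ at one $\F_q$-point and three points of degree $2$ in general position. By Theorem \ref{GenPos} this is a del Pezzo surface of degree $2$, its Carter graph is $A_1^3$ by Lemma \ref{blowup}, and the exceptional curve over the $\F_q$-point is a $\Gamma$-invariant line; hence by Lemma \ref{sameeigenvalues} and Corollary \ref{DP2distinguish} it has type $6$ and not its Geiser twist type $9$. It remains only to produce the seven geometric points in general position (no three collinear, no six on a conic). For $q \ge 4$ this is routine, and is moreover forced by the count above, since $3q+6+D-E \le 3q+7 < q^2+q+1$ guarantees an $\F_q$-point of $Y$ off the lines once $Y$ has been built by blowing up a conic configuration as in Lemma \ref{Type3}.

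The main obstacle is the case $q = 3$. Here the crude estimate fails, because $3q+6+D-E$ can be as large as $q^2+q+1 = 13$, so existence is not guaranteed by counting alone and one must instead control the Eckardt structure or, equivalently, write down an explicit admissible configuration. Concretely I would place two of the degree-$2$ points on a smooth conic over $\F_3$, the third degree-$2$ point and the $\F_q$-point off it, and verify by a direct check over $\F_3$ that no three of the seven points are collinear and no six lie on a conic; this explicit verification over the smallest field is the delicate part of the argument.
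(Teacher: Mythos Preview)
Your argument for non-existence over $\F_2$ is correct and genuinely different from the paper's. The paper simply observes that a type-$6$ surface over $\F_2$ would arise from seven $\F_4$-rational points of $\Pro^2_{\F_4}$ in general position, and cites \cite[Proposition~4.5]{BFL16} to rule this out. Your route is intrinsic: you force $E=6$, $D=0$ by the point count on $Y$, and then use Lemma~\ref{Eckardtlines} in characteristic $2$ to promote the two mixed Eckardt points on $L_{56}$ to five, which drags in the triple $\{L_{12},L_{34},L_{56}\}$ and contradicts $D=0$. This is a clean self-contained argument that avoids the external reference; the paper's approach is shorter but relies on a result proved elsewhere.

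On the existence side your proposal has a gap in the construction of the auxiliary cubic surface $Y$ of type $(c_{17})$. Saying it is ``routine'' for $q\ge 4$ and invoking ``a conic configuration as in Lemma~\ref{Type3}'' does not work as stated: Lemma~\ref{Type3} builds a cubic of type $(c_2)$ from five points on a conic plus one off it, but here you need three points of degree~$2$, and you cannot place all six geometric points on a single conic. The paper fills this gap with a uniform construction valid for every $q$: pick two degree-$2$ points $p_1,\ldots,p_4$ in general position, take two \emph{conjugate} smooth conics $Q_1,Q_2$ through them (these exist since among the pencil only three members are reducible), and let the third degree-$2$ point be $\{q_1,q_2\}$ with $q_1\in Q_1$ and $q_2=\Fr q_1\in Q_2$. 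A short incidence check then shows the six points are in general position, so $Y$ exists over every $\F_q$; for $q\ge 4$ your count $3q+7<q^2+q+1$ finishes. For $q=3$ the paper, like you, resorts to an explicit configuration, but it actually writes one down: seven concrete points over $\F_9$ that one checks directly. Your sketch (two degree-$2$ points on a conic, the rest off it) is plausible but still needs the verification you flag as ``the delicate part''; supplying explicit coordinates, as the paper does, is the cleanest way to close this.
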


\begin{proof}

If $X$ has type $6$ then the Galois group acts on the set of lines on the corresponding cubic surface $Y$ as the permutation~$(12)(34)(56)$.

Over any finite field $\F_q$ the cubic surface $Y$ can be constructed in the following way. Let $p_1$, $p_2$ and $p_3$, $p_4$ be two pairs of conjugate geometric points defined over $\F_{q^2}$ in general position on $\Pro^2_{\F_q}$. Consider two smooth conjugate conics $Q_1$ and $Q_2$ defined over $\F_{q^2}$ passing through $p_1$, $p_2$, $p_3$ and $p_4$ (such conics always exist since three reduced conics are defined over $\F_q$). Let $q_1$ be an $\F_{q^2}$-point on $Q_1$ and $q_2 \in Q_2$ be the conjugate of $q_1$. We show that the points $p_i$ and $q_j$ are in general position.

Any line passing through two points $p_i$ and $p_j$ does not contain other points lying on $Q_1$ or $Q_2$. Therefore such line passes through exactly two points from the set $\{p_i, q_j\}$. If a point $p_i$ lie on a line $L$ passing through $q_1$ and $q_2$ then the conjugate of $p_i$ lie on this line since $q_1$ and $q_2$ are conjugate. Hence $L$ meets $Q_1$ at three points, that is impossible.

Thus the points $p_i$ and $q_j$ are in general position. The blowup of $Y \rightarrow \Pro^2_{\F_q}$ at these points is a required cubic surface by Theorem \ref{GenPos} and Lemma \ref{blowup}.

There are three lines on $Y$ defined over $\F_q$: $L_{12}$, $L_{34}$, and $L_{56}$. Note that the lines $L_{12}$, $L_{34}$, and $L_{56}$ meet each other. Also there are six \mbox{$\F_q$-points} $L_{13} \cap L_{24}$, $L_{14} \cap L_{23}$, $L_{15} \cap L_{26}$, $L_{16} \cap L_{25}$, $L_{35} \cap L_{46}$, and $L_{36} \cap L_{45}$, that are the points of intersection of the lines not defined over $\F_q$. Thus, by Remark \ref{Linespoints}, there are at most $3q + 7$ points defined over $\F_q$ lying on the lines on $Y$.

Note that there are $q^2 + q + 1$ points defined over $\F_q$ on $Y$. For $q \geqslant 4$ one \mbox{has $q^2 + q + 1 > 3q + 7$}. Therefore one can blow up $Y$ at an $\F_q$-point not lying on the lines and get a del Pezzo surface of degree $2$ of type $6$ by Corollary \ref{dPblowup}.

For $q = 3$ one can check that the seven points
$$
\left( 1 : 0 : 1 + \ii \right), \qquad \left( 1 : 0 : 1 - \ii \right), \qquad \left( 0 : 1 : 1 + \ii \right), \qquad \left( 0 : 1 : 1 - \ii \right),
$$
$$
\left( 1 : 1 + \ii : 0 \right), \qquad \left( 1 : 1 - \ii : 0 \right) \qquad \left( 1 : -1 : 1 \right),
$$
\noindent are in general position.

A del Pezzo surface of degree $2$ of type $6$ over $\F_2$ does not exist, since there are no seven $\F_4$-points on $\Pro^2_{\F_4}$ in general position by \cite[Proposition 4.5]{BFL16}.

\end{proof}

Now we consider types $11$, $14$ and $23$.

\begin{lemma}
\label{Type11}
A del Pezzo surface of degree $2$ of type $11$ exists over all finite fields.
\end{lemma}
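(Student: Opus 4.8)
The plan is to realize type $11$ as the blowup of $\Pro^2_{\F_q}$ at one point $P_3$ of degree $3$ and two points $P_2$, $P_2'$ of degree $2$ in general position, i.e.\ the Young-diagram type attached to the partition $(3,2,2)$ of $7$. If such a configuration exists, then the subsets of its seven geometric points are again in general position, so all intermediate blowups are del Pezzo surfaces by Theorem \ref{GenPos}, and by Lemma \ref{blowup} (applied along $(3,2,2)$, starting from $\Pro^2_{\F_q}$) the resulting surface has Carter graph $A_2 \times A_1 \times A_1 = A_2 \times A_1^2$. Equivalently, $\Fr$ acts on $E_1, \ldots, E_7$ as a permutation of cycle type $(3,2,2)$, fixing $L$, so the eigenvalues on $K_X^{\perp}$ are $1,1,1,-1,-1,\omega,\omega^2$; since type $11$ is the only conjugacy class in $W(E_7)$ with Carter graph $A_2 \times A_1^2$ (Table \ref{table2}), the surface has type $11$. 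It therefore suffices to produce, over every $\F_q$, a point of degree $3$ and two points of degree $2$ whose seven geometric points satisfy the general-position conditions of Theorem \ref{GenPos} for $d=2$: no three on a line and no six on a conic.

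For the construction I would fix a smooth conic $Q$ over $\F_q$ and place $P_3$ together with one of the degree-$2$ points, say $P_2$, on $Q$. This is possible over every finite field, because $Q \cong \Pro^1_{\F_q}$ carries $(q^3-q)/3 \geqslant 2$ points of degree $3$ and $(q^2-q)/2 \geqslant 1$ points of degree $2$ for all $q \geqslant 2$. The five geometric points lying on $Q$ automatically admit no three on a line, since a line meets $Q$ in at most two points, and they admit no six on a conic vacuously. The remaining degree-$2$ point $P_2'$ is then chosen off $Q$; there are $q^2(q^2-1)/2$ such points.

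It remains to force the full seven-point configuration into general position. Writing $P_2' = \{R, \Fr(R)\}$, the surviving conditions are: the line through $R$ and $\Fr(R)$ (defined over $\F_q$) must avoid the five points on $Q$; neither $R$ nor $\Fr(R)$ may lie on one of the ten secants of $Q$ through two of those five points; and no conic through four of the five points of $Q$ and through $R$ may also pass through $\Fr(R)$. The two sextuples containing all five points of $Q$ cannot lie on a conic, since a conic other than $Q$ meets $Q$ in at most four points by B\'ezout. Each surviving condition is closed and cuts out a proper subset of the admissible $P_2'$, so for large $q$ a valid choice exists by a direct count of $R$ off these finitely many lines and curves against the $q^2(q^2-1)/2$ available points.

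The main obstacle is the smallest field $\F_2$, where only six points of degree $2$ lie off $Q$ and the counting bound is too coarse to conclude. Here I would proceed exactly as in the case $q=3$ of Lemma \ref{Type6}: fix $\F_8 = \F_2(\beta)$ with $\beta^3 = \beta + 1$ and $\F_4 = \F_2(\alpha)$ with $\alpha^2 = \alpha + 1$, take $Q : XZ = Y^2$, let $P_3$ be the Frobenius orbit of $(1 : \beta : \beta^2)$ and $P_2$ the orbit of $(1 : \alpha : \alpha^2)$ on $Q$, and then exhibit one of the six off-conic points of degree $2$ for which the finitely many collinearity and conic conditions above all fail. This explicit verification over $\F_2$ (and, if the count is not yet effective, over $\F_3$) is the only genuinely computational step, and it completes the proof that del Pezzo surfaces of degree $2$ of type $11$ exist over all finite fields.
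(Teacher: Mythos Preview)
Your approach is sound and will succeed, but it differs from the paper's and is left incomplete in exactly the place where a short Galois argument would have finished it uniformly.

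The paper does not put five of the seven points on a single conic. Instead it fixes an $\F_q$-point $r$ and a degree-$3$ point $\{q_1,q_2,q_3\}$ in general position, takes two smooth $\F_q$-conics $Q_1,Q_2$ through these four points, and two conjugate lines $L_1,L_2$ through $r$ defined over $\F_{q^2}$. The residual intersections $p_{ij}=L_i\cap Q_j\setminus\{r\}$ give the two degree-$2$ points, and general position is checked directly. This is a uniform construction with no counting and no small-field computation.

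Your route---five points on $Q$, then choose $P_2'=\{R,\Fr R\}$ off $Q$---also works, but you underestimate how much the Galois structure does for you. If you push the Frobenius symmetry one step further, conditions (b) and (c) become automatic for every $q$ once $P_2'$ is off $Q$: for the secants $q_iq_j$ and $q_ip_j$, applying $\Fr^2$ forces $R$ to lie on a second secant and hence to equal a point of $Q$, a contradiction; for the conic condition, $\Fr C$ shares five points with $C$, so $C=\Fr C$ contains all five points on $Q$ and hence equals $Q$, again a contradiction. The only surviving constraint is that $P_2'$ must not lie on the $\F_q$-secant through $p_1,p_2$. Over $\F_2$ that secant carries no degree-$2$ point off $Q$, so \emph{every} off-conic degree-$2$ point works and no explicit search is needed; over larger $q$ the excluded set has size $(q^2-q)/2-1$, far smaller than the $q^2(q^2-1)/2$ available points. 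Seen this way, your argument is just as uniform as the paper's, and the ``genuinely computational step'' you flag for $\F_2$ disappears.

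As written, the proposal stops short of carrying out the $\F_2$ verification (and leaves the threshold for the counting argument undetermined), so it is not yet a proof. Either complete the explicit check, or---better---add the Galois argument above and remove the case analysis entirely.
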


\begin{proof}

A del Pezzo surface of type $11$ is the blowup of a $\Pro^2_{\F_q}$ at two points of degree $2$ and a point of degree $3$.

Let $r$, $q_1$, $q_2$, $q_3$ be geometric points on $\Pro^2_{\F_q}$ in general position such that $r$ is an $\F_q$-point, $q_i$ are conjugate points defined over $\F_{q^3}$. Consider two smooth conics $Q_1$ and $Q_2$ defined over $\F_q$ and passing through these four points. Let $L_1$ and $L_2$ be two conjugate lines defined over $\F_{q^2}$ and passing through $r$, and $p_{ij}$ be the points of intersection of $L_i$ and $Q_j$ that differ from $r$. Then the points $p_{ij}$ are defined over $\F_{q^2}$. We show that the points $p_{ij}$ and $q_k$ are in general position.

Any line passing through two points $q_i$ and $q_j$ does not contain other points lying on $Q_1$ or $Q_2$. Therefore such line passing through exactly two points from the set $\{p_{ij}, q_k\}$. If a point $q_k$ lie on a line passing through two points from the set $\{p_{ij}\}$ then the points $\Fr^2q_k$ and $\Fr^4q_k$ lie on this line since $\Fr^2p_{ij} = p_{ij}$ for any $i$ and $j$. But this is impossible since any line meets $Q_1$ at $2$ or $1$ point. If a line $L$ passing through three points from the set $\{p_{ij}\}$ then it has two common points with $L_1$ or $L_2$. That is impossible.

If a conic $Q$ passes through the points $q_1$, $q_2$, $q_3$, and three points from the set $\{p_{ij}\}$ then it has five common points with $Q_1$ or $Q_2$. That is impossible. If a conic $Q$ passes through the six points $p_{11}$, $p_{12}$, $p_{21}$, $p_{22}$, $q_i$, and $q_j$, then the conics $Q$ and $\Fr Q$ have at least five common points. Therefore one has $Q = \Fr Q$ and this conic contains the points $q_1$, $q_2$, $q_3$ and have five common points with $Q_1$ and $Q_2$. That is impossible.

Thus the points $p_{ij}$ and $q_k$ are in general position. The blowup of $\Pro^2_{\F_q}$ at these points is a del Pezzo surface of degree $2$ of type $11$ by Theorem \ref{GenPos} and Lemma \ref{blowup}.

\end{proof}

\begin{lemma}
\label{Type14}
A del Pezzo surface of degree $2$ of type $14$ does not exist over $\F_2$ and exists over other finite fields.
\end{lemma}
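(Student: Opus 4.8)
The plan is to produce a del Pezzo surface of degree $2$ of type $14$ as the blowup of a suitable cubic surface at an $\F_q$-point not lying on the lines, exactly in the style of Lemmas \ref{Type3} and \ref{Type6}. By Corollary \ref{DP2distinguish}, a surface of type $14$ is isomorphic to the blowup of $\Pro^2_{\F_q}$ at an $\F_q$-point, a point of degree $2$, and a point of degree $4$. Equivalently, by Lemma \ref{blowup} its Carter graph is $A_3 \times A_1$, so the Galois action on the lines of the underlying cubic surface $Y$ (of type $(c_{10})$, by the last column of Table \ref{table2} and the matching to Table \ref{table3}) is given by a permutation whose cycle type on $\{p_1,\ldots,p_6\}$ is a product of a $4$-cycle and a transposition, say $(1234)(56)$ in the notation of Remark \ref{cubicnotation}. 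First I would write down, from this permutation, exactly which of the $27$ lines $E_i$, $L_{ij}$, $Q_j$ are defined over $\F_q$, which come in conjugate pairs over $\F_{q^2}$, and which meet one another.

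The nonexistence over $\F_2$ is the easy half and should follow from a point count. Using equation \eqref{Fpoints} with the trace read off from the eigenvalues in row $14$ of Table \ref{table2}, the cubic surface $Y$ has $q^2 + aq + 1$ points over $\F_q$; I would combine this with Remark \ref{Linespoints} to bound the number of $\F_q$-points lying on the lines, counting the $\F_q$-lines, the meeting pairs of $\F_q$-lines, the meeting pairs of conjugate $\F_{q^2}$-lines, and any Eckardt points via Lemma \ref{Eckardtlines}. For $q = 2$ the two quantities should collide ($q^2 + aq + 1 \leqslant$ the number of $\F_q$-points on the lines), so that either $Y$ does not exist or every $\F_q$-point lies on a line, ruling out the blowup; and for $q \geqslant 3$ the inequality $q^2 + aq + 1 > A(q+1) - B + C + D + F$ should hold, giving an $\F_q$-point off the lines and hence, by Corollary \ref{dPblowup}, a del Pezzo surface of degree $2$ of type $14$.

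The genuinely substantive half is the explicit construction of the cubic surface $Y$ over every $\F_q$ with $q \geqslant 3$, which is where the main obstacle lies. I would construct $Y$ by blowing up $\Pro^2_{\F_q}$ at a point of degree $4$ and a point of degree $2$ in general position, producing the required Galois action; the natural recipe is to take a smooth conic $Q$ defined over $\F_q$, choose a point of degree $4$ and a point of degree $2$ on $Q$ (so no three of the six geometric points are collinear, since a line meets $Q$ in at most two points), and blow these up. The hard part will be verifying the general-position conditions of Theorem \ref{GenPos} — in particular that no six of the points lie on a conic — together with checking that the Galois action on the resulting lines is genuinely $(1234)(56)$ and not a conjugate permutation giving a different type, and ensuring enough $\F_{q^4}$- and $\F_{q^2}$-points exist on $Q$ for small $q$. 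As in Lemma \ref{Type6}, I expect the generic argument to work for $q$ large and a separate explicit choice of seven geometric points to be needed for the smallest admissible field ($q = 3$), which I would exhibit by coordinates and check directly.
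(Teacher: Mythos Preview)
Your plan has two genuine gaps, one in each direction.

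\textbf{Existence for $q\geqslant 3$.} Your proposed construction places all six geometric points (the point of degree $4$ and the point of degree $2$) on a single smooth conic $Q$ defined over $\F_q$. But then the six points automatically lie on a conic, violating the last relevant condition of Theorem~\ref{GenPos}; the proper transform of $Q$ on the blowup has self-intersection $4 - 6 = -2$, so the surface is not del Pezzo. This is not a verification difficulty but a fatal defect of the recipe. The paper's construction avoids this by choosing the degree-$4$ point first, then taking \emph{two conjugate} smooth conics $Q_1$, $Q_2$ defined over $\F_{q^2}$ through the four geometric points, and letting $p_1$ be an $\F_{q^2}$-point on $Q_1$ with $p_2 = \Fr p_1 \in Q_2$. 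Then no single conic carries all six points, and the line checks go through easily. With the permutation $(12)(3456)$ the only $\F_q$-line on the resulting cubic $Y$ is $L_{12}$, together with the one extra $\F_q$-point $L_{35}\cap L_{46}$, so at most $q+2$ of the $q^2+q+1$ rational points of $Y$ lie on lines; hence a point off the lines exists for every $q\geqslant 3$ and no separate hand check at $q=3$ is needed.

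\textbf{Nonexistence for $q=2$.} Your point-count argument cannot work here. The underlying cubic surface has type $(c_5)$ (not $(c_{10})$; the last column of Table~\ref{table2} records the Geiser twist, not the blow-down --- the correspondence you want is in Table~\ref{table3}). For $(c_5)$ one has $q^2+q+1 = 7$ rational points and, by the count above, at most $q+2 = 4$ of them on lines; so if $Y$ existed over $\F_2$ you would actually find a good point to blow up. The real obstruction, which the paper proves, is that $Y$ itself does not exist over $\F_2$: there is no pair consisting of a point of degree $2$ and a point of degree $4$ in general position on $\Pro^2_{\F_2}$. The argument is a short pencil-of-conics count --- the two conics through $p_i,q_1,q_2,q_3,q_4$ are conjugate over $\F_4$, but the only such pair through $q_1,\ldots,q_4$ is the pair of reducible conics $L_{13}\cup L_{24}$ and $L_{14}\cup L_{23}$, forcing $p_1,p_2$ onto the lines $L_{ij}$ and destroying general position.
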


\begin{proof}

A del Pezzo surface of type $14$ is the blowup of a $\Pro^2_{\F_q}$ at an $\F_q$-point, a point of degree $2$ and a point of degree $4$.

Let us show that there are no a point of degree $2$ and a point of degree $4$ in general position on $\Pro^2_{\F_2}$. Assume that there exist such points in general position and $p_1$, $p_2$, $q_1$, $q_2$, $q_3$, $q_4$ are the corresponding geometric points. Then a conic $Q_i$ passing through $p_i$, $q_1$, $q_2$, $q_3$, $q_4$ is defined over $\F_4$ and not defined over $\F_2$. There are only two conjugate conics defined over $\F_4$ passing through four points since there is only one point of degree~$2$ on~$\Pro^1_{\F_2}$. Let $L_{ij}$ be a line passing through $q_i$ and $q_j$. Then the conics $L_{12} \cup L_{34}$ and $L_{14} \cup L_{23}$ are conjugate conics defined over $\F_4$ passing through $q_1$, $q_2$, $q_3$, $q_4$. Therefore $Q_1$ and $Q_2$ coincide with these conics, and points $q_1$ and $q_2$ lie on the union of lines $L_{ij}$. Thus the points $p_1$, $p_2$, $q_1$, $q_2$, $q_3$, $q_4$ are not in general position. We have a contradiction.

Now assume that $q \geqslant 3$. Let $q_1$, $q_2$, $q_3$, and $q_4$ be conjugate geometric points defined over~$\F_{q^4}$ in general position on $\Pro^2_{\F_q}$. Consider two smooth conjugate conics $Q_1$ and $Q_2$ defined over $\F_{q^2}$ passing through $q_1$, $q_2$, $q_3$ and $q_4$. Let $p_1$ be an $\F_{q^2}$-point on $Q_1$ and~$p_2 \in Q_2$ be the conjugate of $p_1$. We show that the points $p_i$ and $q_j$ are in general position.

Any line passing through two points $q_i$ and $q_j$ does not contain other points lying on~$Q_1$ or $Q_2$. Therefore such line passes through exactly two points from the set $\{p_i, q_j\}$. If a point $q_i$ lie on a line passing through $p_1$ and $p_2$ then the points $\Fr q_i$, $\Fr^2q_i$ and $\Fr^3q_i$ lie on this line since $\Fr p_1 = p_2$ and $\Fr p_2 = p_1$. But this is impossible.

Thus the points $p_i$ and $q_j$ are in general position. The blowup of $Y \rightarrow \Pro^2_{\F_q}$ at these points is a cubic surface by Theorem \ref{GenPos}. If we blow up an $\F_q$-point not lying on the lines on $Y$ then we get a del Pezzo surface of degree $2$ of type $14$, by Corollary \ref{dPblowup} and Lemma~\ref{blowup}. Such point exists since there are $q^2 + q + 1$ points defined over $\F_q$ on $Y$, and no more than $q + 2$ from those points lie on the lines (in the notation of Remark \ref{cubicnotation} these points are $\F_q$-points on $L_{12}$ and the $\F_q$-point $L_{35} \cap L_{46}$).

\end{proof}

\begin{lemma}
\label{Type23}
A del Pezzo surface of degree $2$ of type $23$ exists over all finite fields.
\end{lemma}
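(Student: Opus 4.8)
The plan is to realize a del Pezzo surface of degree $2$ of type $23$ as the blowup of $\Pro^2_{\F_q}$ at a point of degree $4$ and a point of degree $3$; this is precisely the Young diagram $(4,3)$ among the fifteen blowup-of-$\Pro^2$ types, and it is the fifth of the remaining cases considered in this section. Indeed, by Lemma \ref{blowup} such a blowup has Carter graph $A_{4-1} \times A_{3-1} = A_3 \times A_2$, and scanning Table \ref{table2} this graph occurs for no type other than $23$, so no eigenvalue refinement as in Lemma \ref{sameeigenvalues} is needed. Thus it suffices to produce, over every finite field $\F_q$, a point of degree $4$ and a point of degree $3$ whose seven underlying geometric points are in general position in the sense of Theorem \ref{GenPos}; the blowup is then automatically a del Pezzo surface of degree $2$ of type $23$ (equivalently, one may view this as adjoining the degree-$3$ point to the del Pezzo surface of degree $5$ of type $(abcd)$ furnished by Proposition \ref{DP5}).

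First I would fix two distinct smooth conics $C_1$ and $C_2$ defined over $\F_q$, place the point of degree $4$, say $\{p_1,\ldots,p_4\}$, on $C_1$, and place the point of degree $3$, say $\{q_1,q_2,q_3\}$, on $C_2$. Such orbits always exist because each $C_i \cong \Pro^1_{\F_q}$ carries points of every degree for all $q$ (the numbers of points of degree $4$ and of degree $3$ on $\Pro^1_{\F_q}$ are $(q^4-q^2)/4$ and $(q^3-q)/3$, both positive). The point of this choice is that collinearity inside a single orbit is excluded for free: a line meets a conic in at most two points, so no three of the $p_i$ and no three of the $q_j$ are collinear. (One checks directly that for a single Galois orbit an internal collinear triple would force the whole orbit onto one $\F_q$-line, and here that line would meet $C_i$ in more than two points.)

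It then remains to arrange the mixed general-position conditions. After the conic trick, a collinear triple must consist of two points of one orbit and one of the other, while a conic $D$ through six of the seven points must differ from both $C_1$ and $C_2$; since two distinct conics meet in at most four points, the only dangerous six-point conics are one through all four $p_i$ and two of the $q_j$, or one through three $p_i$ and three $q_j$. Each of these is a closed condition cutting out a proper subvariety of the parameter space of choices: a line through two $p_i$ passes through a prescribed $q_j$ only in codimension one, the pencil of conics through the four $p_i$ contains a member through a second $q_j$ only in codimension one, and a conic through three $p_i$ and three $q_j$ is a still rarer coincidence. Comparing the $\sim q^8/4$ points of degree $4$ with the internally collinear ones (those lying on one of the $q^2+q+1$ lines over $\F_q$, a set of order $q^6$), and likewise for the $\sim q^6/3$ points of degree $3$, a dimension count shows that a valid pair survives for all sufficiently large $q$.

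The main obstacle is the small fields, above all $\F_2$, where this type does exist (in contrast to the neighbouring types $3$, $6$ and $14$) but where the conic construction offers very few candidates, so the asymptotic count is not by itself conclusive. There I would verify general position by hand: over $\F_2$ the totals $(q^8-q^2)/4 = 63$ points of degree $4$ and $(q^6+q^3-q^2-q)/3 = 22$ points of degree $3$ are ample, the internally collinear ones (lying on one of the seven lines over $\F_2$) number $21$ and $14$ respectively, and one confirms a surviving configuration most cleanly by exhibiting explicit conjugate coordinates over $\F_{16}$ and $\F_8$, in the spirit of the explicit $q=3$ configuration in the proof of Lemma \ref{Type6}. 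Checking such an explicit $\F_2$-configuration, together with the uniform exclusion of a six-point conic, is the delicate step; the reduction to the Young diagram $(4,3)$ and the placement of each orbit on its own conic to kill internal collinearities are the routine parts.
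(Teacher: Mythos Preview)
Your reduction is right: type $23$ has Carter graph $A_3\times A_2$ and is uniquely realised as the blowup of $\Pro^2_{\F_q}$ at a point of degree $3$ and a point of degree $4$ in general position. But as written the argument is incomplete. You explicitly defer the case $q=2$ (and implicitly the other small $q$) to an unperformed hand check, and your two-conic setup makes that check harder than it needs to be: restricting the degree-$4$ orbit to lie on a fixed smooth conic over $\F_2$ leaves only $(q^4-q^2)/4=3$ candidates, so you have almost no room to manoeuvre when you still have to dodge the mixed collinear triples and the six-point conics. The ``delicate step'' you flag is genuinely the whole content of the lemma, and you have not carried it out.

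The paper takes a different and cleaner route that eliminates the case split entirely. It fixes a degree-$3$ point $\{q_1,q_2,q_3\}$ not on a line and then, instead of pinning the degree-$4$ point to a conic, lets it range over all of $\Pro^2_{\F_q}$ while avoiding every $\F_q$-line and every $\F_q$-conic through $q_1,q_2,q_3$. There are $q^2+q+1$ such lines and $q^2+q+1$ such conics, each carrying $(q^4-q^2)/4$ points of degree $4$, against $(q^8-q^2)/4$ points of degree $4$ in total; the inequality
\[
\frac{q^8-q^2}{4} \;>\; 2\cdot\frac{q^4-q^2}{4}\cdot(q^2+q+1)
\]
holds for every $q\geqslant 2$, so a suitable degree-$4$ point $\{p_1,\dots,p_4\}$ exists uniformly. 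The point of avoiding \emph{all} conics through the $q_j$ (not just lines) is that it kills the six-point-conic obstruction together with the collinear ones: any putative bad line or conic through a mixture of $p$'s and $q$'s is forced by $\Fr$-conjugacy (using $\Fr^4 p_i=p_i$ and the $3$-cycle on the $q_j$) to pass through all three $q_j$, hence is one of the forbidden curves, contradiction. So what you treated as an asymptotic-plus-hand-check becomes a single counting inequality valid down to $q=2$; your conic trick for internal collinearities is not needed, because the paper's avoidance condition already implies all the general-position constraints via Galois.
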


\begin{proof}

A del Pezzo surface of type $23$ is the blowup of $\Pro^2_{\F_q}$ at a point of degree $3$ and a point of degree $4$.

Let $q_1$, $q_2$, $q_3$ be conjugate geometric points on $\Pro^2_{\F_q}$ not lying on a line and defined over~$\F_{q^3}$. There are $q^2 + q + 1$ lines on $\Pro^2_{\F_q}$ and $q^2 + q + 1$ conics passing through $q_1$, $q_2$ and $q_3$. Each of those curves contains $\frac{q^4 - q^2}{4}$ points of degree $4$. There are $\frac{q^8 - q^2}{4}$ points of degree $4$ on $\Pro^2_{\F_q}$. For any $q \geqslant 2$ one can check that
$$
\frac{q^8 - q^2}{4} > 2 \cdot \frac{q^4 - q^2}{4} \cdot (q^2 + q + 1).
$$
Therefore we can find a point of degree $4$ not lying on a line or a conic passing through $q_1$, $q_2$ and $q_3$. Denote the corresponding geometric points by $p_1$, $p_2$, $p_3$ and $p_4$. We show that the points $p_i$ and $q_j$ are in general position.

Assume that points $p_i$, $p_j$, and $q_k$ lie on a line. Then the points $\Fr^4p_i = p_i$, $\Fr^4p_j = p_j$, and $\Fr^4q_k$ lie on the same line. Therefore the points $q_1$, $q_2$, and $q_3$ lie on a line. We have a contradiction. The same arguments show that three points $p_i$, $q_j$, and $q_k$ can not lie on a line.

If a conic passes through six points $p_i$, $p_j$, $p_k$, $q_1$, $q_2$, and $q_3$ then this conic passes through the points $p_1$, $p_2$, $p_3$, $p_4$ since the sets $\{ \Fr p_i, \Fr p_j, \Fr p_k\}$ and $\{p_i, p_j, p_k\}$ have two common points. If a conic passes through the six points $p_1$, $p_2$, $p_3$, $p_4$, $q_i$, $q_j$ then it passes through the points $q_1$, $q_2$, $q_3$ since either $\Fr q_i = q_j$ or $\Fr q_j = q_i$. All these cases are impossible since the points $p_1$, $p_2$, $p_3$, $p_4$ do not lie on a conic passing through the points $q_1$, $q_2$, and~$q_3$.

Thus the points $p_j$ and $q_j$ are in general position. The blowup of $\Pro^2_{\F_q}$ at these points is a del Pezzo surface of degree $2$ of type $23$ by Theorem \ref{GenPos} and Lemma \ref{blowup}.

\end{proof}

The next five types of del Pezzo surface of degree $2$ can be obtained by blowing up minimal del Pezzo surfaces of degree $4$ of certain types at two $\F_q$-points. The method that we use for these types is very closely related to the method used in \cite[Lemma~3.12]{Tr17}, where there is considered the blowup of the sixth type of a minimal del Pezzo surface of degree $4$ at two $\F_q$-points. 

\begin{lemma}
\label{Type10}
A del Pezzo surface of degree $2$ of type $10$ does not exist over $\F_2$, $\F_3$, $\F_4$, $\F_5$ and exists over other finite fields.
\end{lemma}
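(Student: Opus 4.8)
The plan is to use the description from Corollary~\ref{DP2distinguish}: a del Pezzo surface of degree $2$ of type $10$ is isomorphic to the blowup of a minimal del Pezzo surface $Y$ of degree $4$ of type $\iota_{abcd}$ (see Table~\ref{table4}) at two $\F_q$-points. So I would first analyse $Y$. Writing $\YY$ as the blowup of $\Pro^2_{\kka}$ at $p_1,\dots,p_5$, with its $16$ lines $E_i$, the classes $L_{ij}\sim L-E_i-E_j$ and $Q\sim 2L-\sum_i E_i$, one checks that the Frobenius $g=\iota_{abcd}$ fixes no line and acts on the set of lines as the product of the eight transpositions
$$(E_1,L_{15}),\ (E_2,L_{25}),\ (E_3,L_{35}),\ (E_4,L_{45}),\quad (E_5,Q),\quad (L_{12},L_{34}),\ (L_{13},L_{24}),\ (L_{14},L_{23}),$$
and that the two lines in each of these eight conjugate pairs meet. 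Hence $Y$ has no line defined over $\F_q$, so by Remark~\ref{Linespoints} exactly $C=8$ of its $\F_q$-points lie on lines, namely the intersection points of the eight conjugate pairs; these are pairwise distinct since at most two lines pass through a point on a del Pezzo surface of degree~$4$. Computing the trace of $g$ on $\Pic(\XX)$ gives $a=-2$ in \eqref{Fpoints}, so $N_1(Y)=(q-1)^2$.

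This already settles the smallest fields. For $q=2,3$ one has $(q-1)^2<8$, which is incompatible with the existence of $8$ distinct $\F_q$-points on the lines of $Y$; hence $Y$, and therefore a surface of type $10$, does not exist. For $q=4$ one has $N_1(Y)=9$, so only $9-8=1$ point lies off the lines; since each of the two blown-up points must avoid the lines by Corollary~\ref{dPblowup}, two distinct admissible points cannot be chosen, and type $10$ does not exist over $\F_4$.

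The case $q=5$ is the main obstacle, and I would treat it by passing to the cubic surface. Suppose a surface of type $10$ exists over $\F_5$. By Corollary~\ref{DP2distinguish} it is the blowup of a degree-$4$ surface of type $\iota_{abcd}$ at two $\F_q$-points; contracting one exceptional curve yields a cubic surface $Y'$ carrying an $\F_q$-point off its lines (the image of the other blown-up point). By Lemma~\ref{blowup}, $Y'$ has Carter graph $A_1^4$, and its eigenvalues, hence its line combinatorics, are determined: one finds that the $27$ lines of $Y'$ comprise exactly three lines defined over $\F_q$ forming an anticanonical triangle, together with $12$ meeting conjugate pairs, and that $N_1(Y')=q^2-q+1=21$. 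By Remark~\ref{Linespoints} the number of $\F_q$-points on the lines of $Y'$ equals $3(q+1)-3+12+D-E=27+D-E$, where $D$ and $E$ count the relevant Eckardt points; since $q=5$ is odd, Lemma~\ref{Eckardtlines} bounds by $2$ the Eckardt points on each of the three $\F_q$-lines, which forces $E+3D\le 6$ and hence $27+D-E\ge 21=N_1(Y')$. Therefore every $\F_q$-point of $Y'$ lies on a line, the required admissible point does not exist, and type $10$ cannot occur over $\F_5$.

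Finally, for $q\ge 7$ I would construct $Y$ as a minimal del Pezzo surface of degree $4$ of type $\iota_{abcd}$, which is produced over $\F_q$ in \cite{Ry05}, and then exhibit the two points. Here $Y$ has $(q-1)^2-8\ge 28$ points off its lines, so an admissible $p_6$ exists; blowing it up gives a cubic $Y'$ as above, whose number of points off its $27$ lines is at least $N_1(Y')-(3q+12+D)\ge q^2-4q-12=(q-6)(q+2)>0$ (using $D\le 1$), so an admissible $p_7$ exists as well. Blowing up $p_6$ and $p_7$ produces, by Corollary~\ref{dPblowup} and Lemma~\ref{blowup}, a del Pezzo surface of degree $2$ with Carter graph $A_1^4$, which is of type $10$ (and not of type $9$) precisely because it is realized as the blowup of a degree-$4$ surface of type $\iota_{abcd}$, as in Corollary~\ref{DP2distinguish}. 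The delicate steps are the explicit determination of the line combinatorics of $\iota_{abcd}$ and, above all, the Eckardt-point bookkeeping on $Y'$ that rules out $\F_5$.
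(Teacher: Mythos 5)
Your proposal is correct and follows essentially the same route as the paper: reduce to blowing up a minimal del Pezzo surface of degree $4$ of type $\iota_{abcd}$ at two $\F_q$-points, count the $\F_q$-points on its sixteen lines (the eight intersection points of meeting conjugate pairs), pass to the cubic surface of type $(c_3)$ with its three rational lines and twelve meeting conjugate pairs, and use the Eckardt-point bound of Lemma~\ref{Eckardtlines} to rule out the critical case $q=5$. The only notable differences are minor simplifications on your side: for $q=2,3$ you deduce nonexistence of the degree-$4$ surface directly from $(q-1)^2<8$ where the paper cites \cite{Ry05} and \cite{Tr17}, and for $q=4$ your observation that only $(q-1)^2-8=1$ point avoids the lines of the degree-$4$ surface (while two admissible points are needed) replaces the paper's more laborious Eckardt-point analysis on the cubic for that value of $q$.
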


\begin{proof}

A del Pezzo surface of degree $2$ of type $10$ is the blowup of a minimal del Pezzo surface $S$ of degree $4$ at two $\F_q$-points. The surface $S$ admits a conic bundle structure with degenerate fibres over four $\F_q$-points. Such del Pezzo surface does not exist over $\F_2$ and~$\F_3$, and exists over other finite fields by \cite[Theorem 3.2]{Ry05} and \cite[Theorem~2.5]{Tr17}. Assume that~$q \geqslant 4$. The surface $S$ admits two structures of conic bundles and each of the $16$ lines is a component of a singular fibre of one of these conic bundles. These lines form eight $\langle \Fr \rangle$-orbits, each consisting of $2$ curves. Therefore there are eight $\F_q$-points on the lines, that are the points of intersection of conjugate lines. But by equation \eqref{Fpoints} there are $q^2 - 2q + 1$ points defined over $\F_q$ on~$S$. Thus for $q \geqslant 4$ there is an $\F_q$-point $P$ on~$S$ not lying on the lines. Let $f:\widetilde{S} \rightarrow S$ be the blowup of $S$ at $P$. By Corollary \ref{dPblowup}, the surface~$\widetilde{S}$ is a cubic surface of type $(c_3)$ (see Table \ref{table3}). There are three lines on $\widetilde{S}$ defined over $\F_q$: the exceptional divisor $E = f^{-1}(P)$, and the proper transforms $C_1$ and $C_2$ of the fibres passing through~$P$ of the two conic bundles on $S$.

Now we show that all other $\langle \Fr \rangle$-orbits of lines consist of $2$ lines. Let $H$ be a line on $\widetilde{S}$ that differs from $E$, $C_1$ and $C_2$. If $H \cdot E = 0$ then $f(L)$ is a $(-1)$-curve and the orbit of this curve consists of $2$ curves. Assume that $H \cdot E = 1$. Then $C_1 \cdot H = C_2 \cdot H = 0$ since $E + C_1 + C_2 \sim -K_{\widetilde{S}}$. It means that $f(H)$ is a section of any conic bundle on $S$. For any singular fibre this section must meet one component $D_1$ of this fibre at a point, and for the other component $D_2$ of this fibre $f(H) \cdot D_2 = 0$. But we have $\Fr D_1 = D_2$, therefore $\Fr f(H) \cdot D_2 = f(H) \cdot D_1 = 1$. Thus $\Fr f(H) \ne f(H)$ and the orbit of $H$ consists of $2$ lines.

Therefore on $\widetilde{S}$ there are twelve $\F_q$-points that are the points of intersection of conjugate lines, and three meeting each other lines $E$, $C_1$, and $C_2$ defined over $\F_q$. By Remark \ref{Linespoints}, there are $3q + 12 - \epsilon$ or $3q + 13 - \epsilon$ points defined over $\F_q$ lying on the lines on $Y$, \mbox{where $\epsilon \leqslant 12$} is the number of Eckardt points among the points of intersection of conjugate lines.

By equation \eqref{Fpoints}, there are $q^2 - q + 1$ points defined over $\F_q$ on $\widetilde{S}$. For $q \geqslant 7$ one has \mbox{$q^2 - q + 1 > 3q + 13$}. Therefore one can blow up $\widetilde{S}$ at an $\F_q$-point not lying on the lines and get a del Pezzo surface of degree $2$ of type $10$ by Corollary \ref{dPblowup} and Lemma \ref{blowup}.

By Lemma \ref{Eckardtlines}, for $q = 5$ there are at most $2$ Eckardt points on a line. Therefore if~$E$, $C_1$, and $C_2$ meet at an Eckardt point then there are $3q + 1$ points defined over $\F_q$ on their union and~$\epsilon \leqslant 3$. In this case there are at least $25$ $\F_q$-points lying on the lines on~$\widetilde{S}$. Otherwise, there are $3q$ points defined over $\F_q$ on $E \cup C_1 \cup C_2$ and~$\epsilon \leqslant 6$. In this case there are at least $21$ $\F_q$-points lying on the lines on $\widetilde{S}$. In the both cases there is no an~$\F_q$-point not lying on the lines on $\widetilde{S}$.

By Lemma \ref{Eckardtlines}, for $q = 4$ there are either $0$, or $1$, or $5$ Eckardt points on a line. Therefore if $E$, $C_1$ and $C_2$ meet at an Eckardt point then there are $3q + 1$ points defined over $\F_q$ on their union and~$\epsilon \leqslant 12$. In this case there are at least $13$ $\F_q$-points lying on the lines on~$\widetilde{S}$. Otherwise, there are $3q$ points defined over $\F_q$ on $E \cup C_1 \cup C_2$ and~$\epsilon \leqslant 3$. In this case there are at least $21$ $\F_q$-points lying on the lines on~$\widetilde{S}$. In the both cases there is no an $\F_q$-point not lying on the lines on $\widetilde{S}$.

For $q \leqslant 3$ one has $q^2 - q + 1 < 3q$. Therefore a del Pezzo surface of degree $2$ of type $10$ can not be obtained over $\F_2$, $\F_3$, $\F_4$, and $\F_5$.

\end{proof}

\begin{lemma}
\label{Type16}
A del Pezzo surface of degree $2$ of type $16$ exists over all finite fields.
\end{lemma}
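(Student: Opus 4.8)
The plan is to realize a surface of type $16$ as the blowup of a minimal del Pezzo surface $S$ of degree $4$ at two $\F_q$-points, following the method announced before Lemma~\ref{Type10} and used for type $10$. Reading off Table~\ref{table2}, type $16$ has Carter graph $D_4$, order $6$ and $\rho(\XX)^{\Gamma}=4$; stripping the two eigenvalues $1$ contributed by the two $\F_q$-point blowups leaves a degree~$4$ surface $S$ whose generator of $\Gamma$ has order $6$, Carter graph $D_4$ and $\rho(\XX)^{\Gamma}=2$. By Lemma~\ref{blowup} each $\F_q$-point blowup preserves the Carter graph, so it suffices to produce $S$ and then blow up two $\F_q$-points in the required general position. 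Such an $S$ is a minimal conic bundle of degree $4$ with four degenerate fibres, the discriminant being one $\F_q$-point together with one point of degree $3$ on the base $\Pro^1_{\F_q}$ (the $3$-cycle on three of the fibres yields the order $3$ part of $\Gamma$, and the interchange of components the order $2$ part, giving the $D_4$-element of order $6$). First I would invoke \cite[Theorem 3.2]{Ry05} and \cite[Theorem 2.5]{Tr17} to guarantee that $S$ exists over every $\F_q$: unlike the four-$\F_q$-point configuration of type $10$, which fails for small $q$ because $\Pro^1_{\F_q}$ has too few rational points, here one only needs a single $\F_q$-point and a single point of degree $3$ on $\Pro^1_{\F_q}$, and points of degree $3$ exist over all finite fields. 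This is precisely why type $16$, in contrast to type $10$, will exist over all finite fields.

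Next I would perform the two blowups by point counting. By \eqref{Fpoints} and the eigenvalue list, the generator has trace $1$ on $\Pic(\XX)$, so $S$ has $N_1 = q^2+q+1$ points over $\F_q$. Since $S$ is minimal it carries no $\F_q$-rational $(-1)$-curve, so in Remark~\ref{Linespoints} one has $A=B=0$ and the number of $\F_q$-points lying on lines equals $C$, the number of pairs of $\F_{q^2}$-conjugate lines meeting at an $\F_q$-point; this is a bounded quantity (coming essentially from the nodes of the degenerate fibres), and for every $q\geqslant 2$ it is smaller than $q^2+q+1$. Hence $S$ has an $\F_q$-point $P$ off the lines, and by Corollary~\ref{dPblowup} and Lemma~\ref{blowup} the blowup $Y$ of $S$ at $P$ is a cubic surface with Carter graph $D_4$. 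For $Y$ the generator has trace $2$, so $N_1(Y)=(q+1)^2$. I would then bound the $\F_q$-points on the $27$ lines of $Y$ via the cubic version of Remark~\ref{Linespoints}, controlling the Eckardt-point contributions by Lemma~\ref{Eckardtlines}; showing this bound stays below $(q+1)^2$ for all $q\geqslant 2$ produces an $\F_q$-point on $Y$ off the lines, whose blowup is the desired del Pezzo surface of degree $2$ of type $16$, again by Corollary~\ref{dPblowup} and Lemma~\ref{blowup}.

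The routine part is the construction of $S$ and the two applications of Corollary~\ref{dPblowup}; the delicate part, and the step I expect to be the main obstacle, is the point counting for the smallest fields $q=2$ and $q=3$ on the intermediate cubic $Y$. There $N_1(Y)$ is only $9$ or $16$, so one must genuinely verify that the lines (together with their meeting points and any Eckardt points) cannot cover all rational points. The even characteristic case $q=2$ is the most subtle, since by Lemma~\ref{Eckardtlines} a line may carry as many as five Eckardt points, so I would treat the possible Eckardt configurations on the three $\F_q$-rational lines of $Y$ (the exceptional curve $E=f^{-1}(P)$ and the proper transforms of the rational conic-bundle fibres through $P$) explicitly, exactly as in the proof of Lemma~\ref{Type10}, and check that even in the worst case a rational point off the lines survives. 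Because the degree-$3$ point guaranteeing $S$ exists for all $q$, I expect these counts to remain favorable for every $q$, so no field will be excluded and the lemma will hold over all finite fields.
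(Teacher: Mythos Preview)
Your overall strategy coincides with the paper's: realise type~$16$ as two successive $\F_q$-point blowups of a minimal degree-$4$ surface $S$ of type $(abc)\iota_{ad}$ (Carter graph $D_4$, degenerate fibres over one $\F_q$-point and one point of degree~$3$), which exists over every $\F_q$ by \cite[Theorem~3.2]{Ry05}. Your point counts on $S$ and on the intermediate cubic $Y$ are correct, and the inequality $(q+1)^2>3q+4$ disposes of all $q\geqslant 3$ exactly as in the paper.

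The gap is at $q=2$. There your proposed tool---an Eckardt-configuration case-split ``exactly as in the proof of Lemma~\ref{Type10}''---cannot deliver existence. In Lemma~\ref{Type10} that analysis is used to prove \emph{non}-existence, and here too the worst case for a generic choice of $P$ really does cover all nine $\F_2$-points of $Y$: the three $\F_2$-lines $E,C_1,C_2$ form a tritangent triangle carrying six $\F_2$-points, and the three size-$2$ orbits of conjugate lines contribute three further $\F_2$-points, giving $6+3=9$ on the lines with nothing left over. No amount of Eckardt bookkeeping changes this.

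The paper's remedy is not Eckardt analysis but a judicious choice of $P$ on $S$. One picks $P$ so that the two $\F_2$-nodes $s_1,s_2$ of the rational degenerate fibres of the two conic bundles already lie on the fibres through $P$; after blowing up, these nodes then sit on the $\F_2$-lines $C_1,C_2$ of $Y$ rather than contributing separate $\F_2$-points. This drops the count of $\F_2$-points on lines to at most $8<9$, so a rational point off the lines survives for the second blowup. Your sketch is missing this idea; once you add it, the argument is complete.
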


\begin{proof}

A del Pezzo surface of degree $2$ of type $16$ is the blowup of a minimal del Pezzo surface $S$ of degree $4$ at two $\F_q$-points. The surface $S$ admits a conic bundle structure with degenerate fibres over an $\F_q$-point and a point of degree $3$. Such del Pezzo surface exists over all finite fields by \cite[Theorem 3.2]{Ry05}. The surface $S$ admits two structures of conic bundles and each of the $16$ lines is a component of a singular fibre of one of these conic bundles. The orbits of $\langle \Fr \rangle$ on the set of the lines on $S$ have cardinalities $2$, $2$, $6$, and~$6$. Therefore there are two $\F_q$-points on the lines, that are the points of intersection of the lines in the orbits of length two. But by equation \eqref{Fpoints} there are $q^2 + q + 1$ points defined over $\F_q$ on~$S$. Thus there is an $\F_q$-point $P$ on $S$ not lying on the lines. Let $f:\widetilde{S} \rightarrow S$ be the blowup of $S$ at $P$. By Corollary \ref{dPblowup}, the surface $\widetilde{S}$ is a cubic surface. There are three lines on $\widetilde{S}$ defined over $\F_q$: the exceptional divisor $E = f^{-1}(P)$, and the proper transforms $C_1$ and $C_2$ of the fibres passing through $P$ of the two conic bundles.

As in Lemma \ref{Type10} one can check that the $\langle F \rangle$-orbits of lines meeting $E$ at a point have cardinalities $1$, $1$, $2$, and $6$.

Therefore on $\widetilde{S}$ there are three $\F_q$-points that are the points of intersection of pairs of conjugate lines, and three meeting each other lines $E$, $C_1$, and $C_2$ defined over $\F_q$. By Remark \ref{Linespoints}, there are at most $3q + 4$ points defined over $\F_q$ lying on the lines on $Y$.

By equation \eqref{Fpoints}, there are $q^2 + 2q + 1$ points defined over $\F_q$ on $\widetilde{S}$. For $q \geqslant 3$ one \mbox{has $q^2 + 2q + 1 > 3q + 4$}. Therefore one can blow up $\widetilde{S}$ at an $\F_q$-point not lying on the lines and get a del Pezzo surface of degree $2$ of type $10$ by Corollary \ref{dPblowup} and Lemma \ref{blowup}.

For $q = 2$ we should more carefully choose a point $P$ on $S$.

Let $\pi_1: S \rightarrow \Pro^1_{\F_2}$ and $\pi_2: S \rightarrow \Pro^1_{\F_2}$ be the two conic bundles structures, $s_1$ and $s_2$ be $\F_2$-points on a singular fibres of $\pi_1$ and $\pi_2$ respectively, $F_1$ be a fibre of $\pi_1$ containing~$s_2$ and $F_2$ be a fibre of $\pi_2$ containing $s_1$. If $F_1$ and $F_2$ have a common $\F_2$-point $P$, then one can blow up this point and get a cubic surface $\widetilde{S}$, such that at most eight $\F_2$-points lying on the lines on $\widetilde{S}$, since the two points of intersection of conjugate lines lie on lines defined over $\F_2$.

If $F_1$ and $F_2$ do not have a common $\F_2$-point, then the other smooth fibre $\widetilde{F}_2$ of $\pi_2$ defined over $\F_2$ transversally intersects $F_1$ at two $\F_2$-points. One can blow up any of these points and get a cubic surface $\widetilde{S}$, such that at most eight $\F_2$-points lying on the lines on~$\widetilde{S}$, since one of the points of intersection of conjugate lines lies on a line defined over $\F_2$ and the three lines defined over $\F_2$ do not have a common Eckardt point.

In the both cases there are nine $\F_2$-points on $\widetilde{S}$ and one can blow up $\widetilde{S}$ at an $\F_2$-point not lying on the lines and get a del Pezzo surface of degree $2$ of type~$10$.

\end{proof}

Now we consider del Pezzo surfaces of degree $2$ of types $22$, $29$, and $30$. These surfaces are the blowups of del Pezzo surfaces of degree $4$ with the Picard number $1$ at \mbox{two $\F_q$-points}. The blowup of such a surface at an $\F_q$-point not lying on the lines is a cubic surface admitting a structure of a conic bundle. These cubic surfaces exist over any field by \cite[Theorem~3.2]{Ry05}, and have types $(c_{19})$, $(c_{20})$, and $(c_{24})$ (see Table \ref{table3}) by Lemma \ref{blowup}. 

\begin{lemma}
\label{Types222930}
A del Pezzo surface of degree $2$ of type $22$ does not exist over $\F_2$ and exists over other finite fields.

Del Pezzo surfaces of degree $2$ of types $29$ and $30$ exist over all finite fields.
\end{lemma}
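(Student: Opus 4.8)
The plan is to reduce each of the three types to the corresponding cubic surface and then apply the counting method of Lemmas \ref{Type10} and \ref{Type16}. By the discussion preceding the lemma, a del Pezzo surface of degree $2$ of type $22$, $29$ or $30$ is the blowup at an $\F_q$-point (not lying on the lines) of a cubic surface of type $(c_{19})$, $(c_{20})$ or $(c_{24})$ respectively (see Table \ref{table3}), and these cubic surfaces exist over all finite fields by \cite[Theorem 3.2]{Ry05}. Hence, by Corollary \ref{dPblowup} and Lemma \ref{blowup}, it suffices to decide, for each such cubic surface $\widetilde{S}$ and each $q$, whether $\widetilde{S}$ carries an $\F_q$-point off its lines.

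First I would fix $\widetilde{S}$ as the blowup of a minimal del Pezzo surface $S$ of degree $4$ with $\rho(S) = 1$ at an $\F_q$-point, exactly as in Lemmas \ref{Type10} and \ref{Type16}. The three lines $E$, $C_1$, $C_2$ --- the exceptional divisor and the proper transforms of the two fibres through the blown-up point --- are then defined over $\F_q$ and meet each other, and the remaining $\langle \Fr \rangle$-orbits of lines are analysed through the two conic bundle structures on $S$: a line disjoint from $E$ descends to a $(-1)$-curve on $S$, while a line meeting $E$ descends to a section of a conic bundle, whose orbit has even length because the two components of each degenerate fibre are conjugate. This fixes the numbers $A$, $B$, $C$ of Remark \ref{Linespoints} for each of the three types.

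Next I would compare, using \eqref{Fpoints}, the number of $\F_q$-points on $\widetilde{S}$ with the number lying on the lines, bounded above by $A(q+1) - B + C + D + F$ as in Remark \ref{Linespoints}, the Eckardt contributions being controlled by Lemma \ref{Eckardtlines}. For $q$ beyond an explicit bound the quadratic $q^2 + aq + 1$ exceeds this linear estimate, so an $\F_q$-point off the lines exists and blowing it up produces the required surface of degree $2$. The delicate part is small $q$, above all $q = 2$, where the crude count leaves no slack; there I would argue as in the final paragraphs of Lemma \ref{Type16}, selecting the point on $\widetilde{S}$ (equivalently, the second $\F_q$-point on $S$) by means of the two conic bundle structures so that the points of intersection of conjugate lines already lie on the lines $E$, $C_1$, $C_2$ defined over $\F_q$, and so that $E$, $C_1$, $C_2$ do not share an Eckardt point, keeping the number of $\F_q$-points on the lines small. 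I expect this $q = 2$ analysis to be the main obstacle.

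Finally, the nonexistence of type $22$ over $\F_2$ I would obtain from the contraction structure rather than from point counting on the cubic surface. A surface of type $22$ is, by construction, the blowup of a del Pezzo surface $Y$ of degree $4$ with $\rho(Y) = 1$ at two distinct $\F_q$-points, obtained by contracting the two $\Fr$-invariant exceptional $(-1)$-curves; in particular $Y$ must carry at least two $\F_q$-points. The eigenvalues of a generator of $\Gamma$ on $\Pic(\overline{Y})$ are $1$, $\ii$, $-1$, $-\ii$, $-1$, $-1$, so by \eqref{Fpoints} one has $N_1(Y) = q^2 - 2q + 1 = (q-1)^2$, which equals $1$ for $q = 2$. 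Thus $Y$ has a single $\F_2$-point and the two blowup centres cannot be found, so no del Pezzo surface of degree $2$ of type $22$ exists over $\F_2$. For types $29$ and $30$ the corresponding surfaces $Y$ have $q^2 + 1$ and $q^2 + q + 1$ points respectively, which is already enough over $\F_2$ to complete the construction once the point has been chosen as above.
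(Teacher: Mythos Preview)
Your outline is essentially correct, but it takes a noticeably longer road to the existence half than the paper does, while your non-existence argument is in fact cleaner.

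\medskip

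\textbf{Existence.} The paper does not re-derive the $\langle\Fr\rangle$-orbit structure on the $27$ lines from the two conic bundles on the underlying degree-$4$ surface as you propose; it simply reads off from \cite[Table~1]{Man74} that the orbits of length $\leqslant 3$ have sizes $1,2,2,2$ (type $c_{19}$), $1,2$ (type $c_{20}$), and $1$ (type $c_{24}$). Remark~\ref{Linespoints} then gives at most $q+4$, $q+2$, $q+1$ rational points on the lines, while \eqref{Fpoints} gives $q^2-q+1$, $q^2+q+1$, $q^2+2q+1$ rational points on the cubic. For types $29$ and $30$ the inequality already holds for every $q\geqslant 2$, so no ``careful choice of the blow-up point'' in the style of Lemma~\ref{Type16} is needed at all---the crude count is enough. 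For type $22$ the crude count only works for $q\geqslant 4$; at $q=3$ the paper does not juggle Eckardt points but uses directly that the cubic $(c_{19})$ carries a conic bundle with three singular fibres and one smooth fibre $F$ over $\F_3$, the unique $\F_3$-line being a bisection, so $F$ has at least two $\F_3$-points off the lines. Your plan would also work, but with visibly more effort; you should also note that $q=3$ (not only $q=2$) lies outside the range where the crude count suffices for type~$22$.

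\medskip

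\textbf{Non-existence of type $22$ over $\F_2$.} Here your argument is genuinely different and arguably nicer. The paper observes that for $q=2$ all three $\F_2$-fibres of the conic bundle on the cubic $(c_{19})$ are singular, so every $\F_2$-point lies on a line. You instead contract the two $\Fr$-invariant $(-1)$-curves on a type-$22$ surface to a del Pezzo surface $Y$ of degree $4$ of type $(ab)\iota_{acde}$ (as in Corollary~\ref{DP2distinguish}) and note that by \eqref{Fpoints} one has $N_1(Y)=(q-1)^2=1$, so $Y$ cannot carry two distinct $\F_2$-points to blow up. This bypasses the cubic and the fibre analysis entirely.
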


\begin{proof}

From \cite[Table 1]{Man74} one can see, that the $\langle \Fr \rangle$-orbits of the lines of length at most $3$ have cardinalities $1$, $2$, $2$, and $2$ for a cubic surface of type $(c_{19})$, have cardinalities $1$ and $2$ for a cubic surface of type $(c_{20})$, and have cardinality $1$ for a cubic surface of type~$(c_{24})$. Therefore by Remark~\ref{Linespoints} there are at most $q + 4$, $q + 2$ and $q + 1$ points defined over $\F_q$ and lying on the lines for cubic surfaces of types $(c_{19})$, $(c_{20})$ and $(c_{24})$ respectively. By equation~\eqref{Fpoints}, there are $q^2 - q + 1$, $q^2 + q + 1$ and $q^2 + 2q + 1$ points defined over $\F_q$ on these types of cubic surfaces respectively.

Note that for $q \geqslant 4$ one has $q^2 - q + 1 > q + 4$, and for $q \geqslant 2$ one has $q^2 + q + 1 > q + 2$, and $q^2 + 2q + 1 > q + 1$. Therefore one can blow up a cubic surface of type $(c_{19})$ for $q \geqslant 4$ or a cubic surface of type $(c_{20})$ or $(c_{24})$ over any finite field at an $\F_q$-point not lying on the lines and get a del Pezzo surface of degree $2$ of types $22$, $29$ or $30$ respectively by Corollary \ref{dPblowup} and Lemma \ref{blowup}.

For $q = 3$ a cubic surface of type $(c_{19})$ has a structure of a conic bundle with three degenerate fibres and one smooth fibre $F$ defined over $\F_3$. These singular fibres consist of pairs of conjugate lines defined over $\F_9$, and the line defined over $\F_3$ is a bisection of this conic bundle (see the proof of \cite[Theorem 4]{Isk79}). Therefore there are at least two~$\F_3$-points on $F$ not lying on the lines. One can blow up the cubic surface at one of these~$\F_3$-points and get a del Pezzo surface of degree $2$ of type $22$.

For $q = 2$ a cubic surface of type $(c_{19})$ has a structure of a conic bundle with three degenerate fibers defined over $\F_2$. Therefore there is no an $\F_2$-point not lying on the lines. Thus a del Pezzo surface of degree $2$ of type $22$ can not be obtained over $\F_2$.

\end{proof}

The remaining case is a del Pezzo surface of degree $2$ of type $38$. This surface is the blowup of a minimal cubic surface of type $(c_{10})$ at an $\F_q$-point not lying on the lines.

\begin{lemma}
\label{Type38}
A del Pezzo surface of degree $2$ of type $38$ does not exist over $\F_2$ and exists over other finite fields.
\end{lemma}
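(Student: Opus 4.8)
The plan is to realize a del Pezzo surface of degree $2$ of type $38$ as the blowup of a minimal cubic surface of type $(c_{10})$, thereby reducing the entire statement to the existence of such a cubic. By Corollary \ref{DP2distinguish} a surface of type $38$ is the blowup of a minimal cubic surface $Y$ of type $(c_{10})$ at an $\F_q$-point off the lines; conversely, since type $38$ is the Geiser twist of type $25$, by Lemma \ref{sameeigenvalues} a surface of type $38$ carries a $\Gamma$-invariant $(-1)$-curve, and contracting it yields a minimal cubic surface of type $(c_{10})$ over $\F_q$ (the contracted point lying off the lines of $Y$). Hence a surface of type $38$ exists over $\F_q$ if and only if a cubic surface of type $(c_{10})$ exists over $\F_q$ and carries an $\F_q$-point not lying on its lines.

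Next I would show that this extra $\F_q$-point is automatic whenever $Y$ exists. The eigenvalues of a generator $g$ of $\Gamma$ on $K_Y^{\perp}$ are those of the graph $A_5 \times A_1$, so $\operatorname{tr}(g \mid \Pic(\XX)) = -1$ and by \eqref{Fpoints} one has $N_1 = q^2 - q + 1$. From \cite[Table~1]{Man74} the $\langle \Fr \rangle$-orbits on the $27$ lines of $Y$ have lengths $6$, $6$, $6$, $6$ and $3$; in particular $Y$ has no lines defined over $\F_q$ or $\F_{q^2}$. Because $Y$ is minimal, $\Pic(\XX)^{\Gamma} \otimes \Q$ is spanned by $K_Y$, whence the three conjugate lines $\ell_1$, $\ell_2$, $\ell_3$ of the length-$3$ orbit satisfy $\ell_1 + \ell_2 + \ell_3 \sim -K_Y$ and form a tritangent trio. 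By Remark \ref{Linespoints} the number of $\F_q$-points on the lines of $Y$ thus equals the number of Eckardt points of this single trio, which is $0$ or $1$ by Lemma \ref{Eckardtlines}. Therefore $Y$ has at least $q^2 - q \geqslant 2$ points off the lines for every $q$, and blowing one of them up gives a del Pezzo surface of degree $2$ (by Corollary \ref{dPblowup}) whose type has Carter graph $A_5 \times A_1$ by Lemma \ref{blowup} and which carries an invariant $(-1)$-curve, hence is of type $38$ by Corollary \ref{DP2distinguish}.

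It remains to settle the existence of the minimal cubic $Y = (c_{10})$. For $q \geqslant 3$ I would produce it following the construction of minimal cubic surfaces of \cite{RT17}, treating the finitely many small $q$ outside the range of that construction by exhibiting explicit cubics; combined with the previous paragraph this yields type $38$ over every $\F_q$ with $q \geqslant 3$. Over $\F_2$ the reduction of the first paragraph shows that a surface of type $38$ would produce a smooth cubic surface of type $A_5 \times A_1$ over $\F_2$, and I would rule this out exactly as the analogous obstruction in Lemma \ref{Type6}, namely that the corresponding point configuration cannot be placed in general position over the relevant extension of $\F_2$ (cf. \cite[Proposition~4.5]{BFL16}). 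Hence type $38$ does not exist over $\F_2$.

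The main obstacle is the construction of the minimal cubic $(c_{10})$ for small $q$ together with its nonexistence over $\F_2$: being minimal, $(c_{10})$ cannot be obtained by blowing up any surface of higher degree over $\F_q$, so it must be produced as a genuine twist, and the delicate point is precisely the borderline field $\F_2$. By contrast, once $Y$ is available the blowup step is forced, since the point count of the second paragraph shows the required $\F_q$-point off the lines always exists.
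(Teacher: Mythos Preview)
Your overall strategy coincides with the paper's: reduce type $38$ to the existence of a minimal cubic of type $(c_{10})$ together with an $\F_q$-point off the lines, count points via \eqref{Fpoints} to see that such a point is automatic, and then import the existence/nonexistence of $(c_{10})$ from \cite{RT17}. The point-count and the tritangent-trio argument are exactly what the paper does (the paper phrases it as ``at most one $\F_q$-point lying on the lines''), and for $q\geqslant 3$ the paper likewise just cites \cite[Proposition~6.1]{RT17} for the existence of $(c_{10})$.

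The gap is your treatment of $\F_2$. You propose to rule out $(c_{10})$ over $\F_2$ ``exactly as the analogous obstruction in Lemma~\ref{Type6}, namely that the corresponding point configuration cannot be placed in general position over the relevant extension of $\F_2$ (cf.\ \cite[Proposition~4.5]{BFL16})''. That argument does not transfer. In Lemma~\ref{Type6} the relevant cubic has $\rho=4$ and is a blowup of $\Pro^2$ over $\F_2$, so base-changing the degree-$2$ surface to $\F_4$ produces seven $\F_4$-points on $\Pro^2_{\F_4}$, to which \cite[Proposition~4.5]{BFL16} applies directly. A cubic of type $(c_{10})$ is minimal over $\F_2$ (you say so yourself), so no such configuration on $\Pro^2_{\F_2}$ is available; and over the natural extensions the surface becomes $(c_9)$ over $\F_4$ and $(c_3)$ over $\F_8$, both of which \emph{do} exist over those fields, so a naive base-change obstruction of the kind you invoke does not bite. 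The paper does not attempt any such argument: it cites \cite[Proposition~6.5]{RT17}, a result proved by methods specific to minimal cubics (not by a general-position count on $\Pro^2$), for the nonexistence of $(c_{10})$ over $\F_2$. Either cite that result, or supply an independent proof tailored to the minimal case; the analogy with Lemma~\ref{Type6} is not one.

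A minor remark: you do not need Lemma~\ref{sameeigenvalues} to see that every surface of type $38$ contracts to a $(c_{10})$ cubic; Corollary~\ref{DP2distinguish} states this directly, and that is all the paper uses.
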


\begin{proof}

A del Pezzo surface of type $38$ is the blowup of a cubic surface of type $(c_{10})$ at \mbox{an $\F_q$-point}. From \cite[Table 1]{Man74} one can see that the $\langle \Fr \rangle$-orbits of lines have cardinalities $3$, $6$, $6$, $6$ and $6$ for a cubic surface of type $(c_{10})$. Therefore there is at most one $\F_q$-point lying on the lines for this cubic surface. By equation \eqref{Fpoints} there are $q^2 - q + 1$ points defined over $\F_q$ on this cubic surface. One has $q^2 - q + 1 > 1$ for any $q$.

For $q \geqslant 3$ a cubic surface of type $(c_{10})$ exists by \cite[Proposition 6.1]{RT17}. Therefore one can blow up this surface at an $\F_q$-point not lying on the lines and get a del Pezzo surface of degree $2$ of type $38$ by Corollary \ref{dPblowup} and Lemma \ref{blowup}.

For $q = 2$ a cubic surface of type $(c_{10})$ does not exist by \cite[Proposition 6.5]{RT17}. Thus a del Pezzo surface of degree $2$ of type $38$ can not be obtained over $\F_2$.

\end{proof}

The blowups of minimal cubic surfaces at an $\F_q$-point of other types $(c_{11})$--$(c_{14})$ are considered in \cite[Lemma 3.13]{Tr17}. But for the three types $(c_{12})$--$(c_{14})$ of these surfaces there are some restrictions on $q$ coming from \cite[Theorem 1.2]{RT17}. In Section $5$ we remove these restrictions for the types $(c_{12})$ and $(c_{13})$.

Now we prove Theorem \ref{DP2}.

\begin{proof}[Proof of Theorem \ref{DP2}]

Del Pezzo surfaces of degree $2$ of types that are the Geiser twists (see Definition \ref{GeiserTwistDef}) of each other exist over the same finite fields. For each of the $30$ pairs of types we give a reference, where one of these types is considered.

\begin{itemize}

\item The types $1$ and $49$ are considered in \cite[Subsection 4.2, case $a = 8$]{BFL16}.
\item The types $2$ and $31$ are considered in \cite[Subsection 4.2, case $a = 6$]{BFL16}.
\item The types $3$ and $18$ are considered in Lemma \ref{Type3}.
\item The types $4$ and $53$ are considered in \cite[Subsection 4.2, case $a = 5$]{BFL16}.
\item The types $5$ and $10$ are considered in Lemma \ref{Type10}.
\item The types $6$ and $9$ are considered in Lemma \ref{Type6}.
\item The types $7$ and $40$ are considered in \cite[Proposition 2.17 and Lemma 3.5]{Tr17}.
\item The types $8$ and $33$ are considered in \cite[Subsection 4.2, case $a = 4$]{BFL16}.
\item The types $11$ and $27$ are considered in Lemma \ref{Type11}.
\item The types $12$ and $55$ are considered in \cite[Lemma 3.10]{Tr17}.
\item The types $13$ and $22$ are considered in Lemma \ref{Types222930}.
\item The types $14$ and $21$ are considered in Lemma \ref{Type14}.
\item The types $15$ and $54$ are considered in \cite[Subsection 4.2, case $a = 3$]{BFL16}.
\item The types $16$ and $19$ are considered in Lemma \ref{Type16}.
\item The types $17$ and $50$ are considered in \cite[Lemma 3.12]{Tr17}.
\item The types $20$ and $45$ are considered in \cite[Proposition 2.17]{Tr17}.
\item The types $23$ and $42$ are considered in Lemma \ref{Type23}.
\item The types $24$ and $43$ are considered in \cite[Subsection 4.2, case $a = 1$]{BFL16}.
\item The types $25$ and $38$ are considered in Lemma \ref{Type38}.
\item The types $26$ and $37$ are considered in \cite[Subsection 4.2, case $a = 2$]{BFL16}.
\item The types $28$ and $35$ are considered in \cite[Proposition 2.17]{Tr17}.
\item The types $29$ and $41$ are considered in Lemma \ref{Types222930}.
\item The types $30$ and $34$ are considered in Lemma \ref{Types222930}.
\item The types $32$ and $60$ are considered in \cite[Lemma 3.13]{Tr17}.
\item The types $36$ and $59$ are considered in \cite[Lemma 3.11]{Tr17}.
\item The types $39$ and $57$ are considered in \cite[Lemma 3.10]{Tr17}.
\item The types $44$ and $52$ are considered in \cite[Proposition 2.17]{Tr17}.
\item The types $46$ and $58$ are considered in \cite[Lemma 3.13]{Tr17} and Lemma \ref{TypesC12C13}.
\item The types $47$ and $56$ are considered in \cite[Lemma 3.13]{Tr17} and \cite{RT17}.
\item The types $48$ and $51$ are considered in \cite[Lemma 3.13]{Tr17} and Lemma \ref{TypesC12C13}.
\end{itemize}

\end{proof}

\section{Del Pezzo surfaces of degree $3$}

In this section for each cyclic subgroup $\Gamma \subset W(E_6)$ we construct the corresponding cubic surface (that is a del Pezzo surface of degree $3$) over $\F_q$ if it is possible, and show that such surfaces do not exist for other values of $q$. As a result we prove Theorem~\ref{DP3}.

In Table \ref{table3} we collect some facts about conjugacy classes of elements in the Weyl group~$W(E_6)$. This table based on the tables in \cite{SD67}, \cite{Man74}, and \cite{Car72}. The first column is a type according to \cite{SD67}. The second column is the Carter graph corresponding to the conjugacy class (see \cite{Car72}). The third column is the order of element. The fourth column is the collection of eigenvalues of the action of element on $K_X^{\perp} \subset \Pic(\XX) \otimes \mathbb{Q}$. The fifth column is the invariant Picard number $\rho(\XX)^{\Gamma}$. The last column is the type of the corresponding conjugacy class (see Table \ref{table2}) in $W(E_7)$ after blowing up a cubic surface at an $\F_q$-point.

\begin{table}

\begin{center}
\begin{tabular}{|c|c|c|c|c|c|}
\hline
Type & Graph & Order & Eigenvalues & $\rho(\XX)^{\Gamma}$ & Blowup \\
\hline
$c_1$ & $\varnothing$ & $1$ & $1$, $1$, $1$, $1$, $1$, $1$ & $7$ & 1. \\
\hline
$c_2$ & $A_1^2$ & $2$ & $-1$, $-1$, $1$, $1$, $1$, $1$ & $5$ & 3. \\
\hline
$c_3$ & $A_1^4$ & $2$ & $-1$, $-1$, $-1$, $-1$, $1$, $1$ & $3$ & 10. \\
\hline
$c_4$ & $D_4(a_1)$ & $4$ & $\ii$, $\ii$, $-\ii$, $-\ii$, $1$, $1$ & $3$ & 17. \\
\hline
$c_5$ & $A_3 \times A_1$ & $4$ & $\ii$, $-\ii$, $-1$, $-1$, $1$, $1$ & $3$ & 14. \\
\hline
$c_6$ & $A_2$ & $3$ & $\omega$, $\omega^2$, $1$, $1$, $1$, $1$ & $5$ & 4. \\
\hline
$c_7$ & $D_4$ & $6$ & $-\omega$, $-\omega^2$, $-1$, $-1$, $1$, $1$ & $3$ & 16. \\
\hline
$c_8$ & $A_2 \times A_1^2$ & $6$ & $\omega$, $\omega^2$, $-1$, $-1$, $1$, $1$ & $3$ & 11. \\
\hline
$c_9$ & $A_2^2$ & $3$ & $\omega$, $\omega$, $\omega^2$, $\omega^2$, $1$, $1$ & $3$ & 12. \\
\hline
$c_{10}$ & $A_5 \times A_1$ & $6$ & $-\omega$, $-\omega^2$, $\omega$, $\omega^2$, $-1$, $-1$ & $1$ & 38. \\
\hline
$c_{11}$ & $A_2^3$ & $3$ & $\omega$, $\omega$, $\omega$, $\omega^2$, $\omega^2$, $\omega^2$ & $1$ & 32. \\
\hline
$c_{12}$ & $E_6(a_2)$ & $6$ & $-\omega$, $-\omega$, $-\omega^2$, $-\omega^2$, $\omega$, $\omega^2$ & $1$ & 48. \\
\hline
$c_{13}$ & $E_6$ & $12$ & $\ii\omega$, $\ii\omega^2$, $-\ii\omega$, $-\ii\omega^2$, $\omega$, $\omega^2$ & $1$ & 46. \\
\hline
$c_{14}$ & $E_6(a_1)$ & $9$ & $\xi_9$, $\xi_9^2$, $\xi_9^4$, $\xi_9^5$, $\xi_9^7$, $\xi_9^8$ & $1$ & 47. \\
\hline
$c_{15}$ & $A_4$ & $5$ & $\xi_5$, $\xi_5^2$, $\xi_5^3$, $\xi_5^4$, $1$, $1$ & $3$ & 15. \\
\hline
$c_{16}$ & $A_1$ & $2$ & $-1$, $1$, $1$, $1$, $1$, $1$ & $6$ & 2. \\
\hline
$c_{17}$ & $A_1^3$ & $2$ & $-1$, $-1$, $-1$, $1$, $1$, $1$ & $4$ & 6. \\
\hline
$c_{18}$ & $A_3$ & $4$ & $\ii$, $-\ii$, $-1$, $1$, $1$, $1$ & $4$ & 8. \\
\hline
$c_{19}$ & $A_3 \times A_1^2$ & $4$ & $\ii$, $-\ii$, $-1$, $-1$, $-1$, $1$ & $2$ & 22. \\
\hline
$c_{20}$ & $D_5$ & $8$ & $\xi_8$, $\xi_8^3$, $\xi_8^5$, $\xi_8^7$, $-1$, $1$ & $2$ & 29. \\
\hline
$c_{21}$ & $A_2 \times A_1$ & $6$ & $\omega$, $\omega^2$, $-1$, $1$, $1$, $1$ & $4$ & 7. \\
\hline
$c_{22}$ & $A_2^2 \times A_1$ & $6$ & $\omega$, $\omega$, $\omega^2$, $\omega^2$, $-1$, $1$ & $2$ & 20. \\
\hline
$c_{23}$ & $A_5$ & $6$ & $-\omega$, $-\omega^2$, $\omega$, $\omega^2$, $-1$, $1$ & $2$ & 26. \\
\hline
$c_{24}$ & $D_5(a_1)$ & $12$ & $-\omega$, $-\omega^2$, $\ii$, $-\ii$, $-1$, $1$ & $2$ & 30. \\
\hline
$c_{25}$ & $A_4 \times A_1$ & $10$ & $\xi_5$, $\xi_5^2$, $\xi_5^3$, $\xi_5^4$, $-1$, $1$ & $2$ & 24. \\
\hline

\end{tabular}
\end{center}

\caption[]{\label{table3} Conjugacy classes of elements in $W(E_6)$}
\end{table}

\begin{remark}
\label{DP2blowdown}
Theorem \ref{DP3} for cubic surfaces of types $(c_7)$, $(c_8)$, $(c_{15})$, $(c_{20})$, $(c_{22})$, $(c_{23})$, $(c_{24})$ and $(c_{25})$ immediately follows from Theorem \ref{DP2}. One can consider a del Pezzo surface of degree $2$ of the corresponding type (see the last column of Table \ref{table3}) and contract a line defined over $\F_q$.
\end{remark}

Many types of cubic surfaces are considered in \cite{Ry05}, \cite{SD10}, \cite{RT17}, \cite{Tr17}, \cite{BFL16} and in some lemmas in Section $4$. Actually, there are only three types of cubic surfaces left: $(c_2)$, $(c_9)$, $(c_{21})$. These cubic surfaces are isomorhic to the blowup of $\Pro^2_{\F_q}$ at a number of points of certain degrees. We consider these types in the following three lemmas.

\begin{lemma}
\label{TypeC2}
A cubic surface of type $(c_2)$ does not exist over $\F_2$, $\F_3$, and exists over other finite fields.
\end{lemma}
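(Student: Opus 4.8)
The plan is to realize a cubic surface of type $(c_2)$ as the blowup of $\Pro^2_{\F_q}$ at two $\F_q$-points $p_5$, $p_6$ and two points of degree $2$, and then to reduce the entire existence question to the geometry of the pencil of conics through the four geometric points underlying the two degree-$2$ points. First I would check, using Lemma \ref{blowup} (blowing up the four closed points one at a time) together with a direct computation of the action of $\Fr$ on $\Pic(\XX)$, that such a blowup has Carter graph $A_1^2$ and eigenvalues $-1,-1,1,1,1,1$ on $K_X^{\perp}$, hence has type $(c_2)$; here $\Fr$ acts on the six exceptional classes as the permutation $(12)(34)$ in the notation of Remark \ref{cubicnotation}. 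Thus the problem becomes: for which $q$ do there exist two $\F_q$-points and two points of degree $2$ on $\Pro^2_{\F_q}$ in general position?

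Let $P=\{p_1,p_2\}$ and $R=\{p_3,p_4\}$ be the two degree-$2$ points, with spans $\ell_P$, $\ell_R$ defined over $\F_q$. Since $\ell_P\ne\ell_R$, the four $\F_{q^2}$-points $p_1,\dots,p_4$ form a complete quadrangle (no three collinear, as $p_3,p_4\notin\ell_P$ and conversely), and its three diagonal points $v=\ell_P\cap\ell_R$, $w_1=\overline{p_1p_3}\cap\overline{p_2p_4}$, $w_2=\overline{p_1p_4}\cap\overline{p_2p_3}$ are $\F_q$-points. The heart of the argument is the pencil of conics through $p_1,\dots,p_4$: over $\F_q$ it is a $\Pro^1_{\F_q}$ with $q+1$ members, exactly three of which are singular, namely the three pairs of opposite sides $\ell_P\cup\ell_R$, $\overline{p_1p_3}\cup\overline{p_2p_4}$, $\overline{p_1p_4}\cup\overline{p_2p_3}$. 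Each of these line-pairs is Galois-stable (the two conjugate cross-lines are interchanged by $\Fr$), hence defined over $\F_q$, so the pencil contains exactly $q-2$ smooth conics defined over $\F_q$. I would then record the key distribution: every $\F_q$-point other than $p_1,\dots,p_4$ lies on a unique member of the pencil, the three singular members carrying $2q+1$, $1$ and $1$ such points respectively (the last two being $w_1$ and $w_2$), so that the remaining $(q-2)(q+1)=q^2-q-2$ points are exactly those lying on the $q-2$ smooth conics.

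The next step is to translate general position into this language. A short check of collinearities shows that three of the six points are collinear only when $p_5$ or $p_6$ lies on one of the six sides of the quadrangle; the case $\{p_i,p_5,p_6\}$ reduces to this, since $p_i\in\overline{p_5p_6}$ would force $\overline{p_5p_6}$ to be a side. Hence $p_5$, $p_6$ satisfy every no-three-collinear condition precisely when neither lies on a singular member, i.e. when each lies on a smooth conic of the pencil. Moreover, any conic through all six points must pass through $p_1,\dots,p_4$ and is therefore a member of the pencil, so the six points lie on a conic if and only if $p_5$ and $p_6$ lie on the \emph{same} smooth member. Thus a configuration in general position exists if and only if $p_5$ and $p_6$ can be placed on two \emph{distinct} smooth conics of the pencil.

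This makes the trichotomy immediate. For $q\geqslant 4$ there are $q-2\geqslant 2$ smooth conics, each with $q+1$ $\F_q$-points and meeting the others only in the base points $p_1,\dots,p_4$ (which are not $\F_q$-points); choosing $p_5$ and $p_6$ on two different smooth conics gives the required configuration, and Theorem \ref{GenPos} with Lemma \ref{blowup} then produces a cubic surface of type $(c_2)$. For $q=3$ every such quadrangle admits exactly one smooth conic in its pencil, so $p_5$ and $p_6$ are forced onto it and the six points always lie on a conic; for $q=2$ there are no smooth members at all (equivalently all $q^2+q+1=7$ points lie on the three singular members), so $p_5$, $p_6$ cannot even be chosen off the sides. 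In both cases no surface of type $(c_2)$ exists. I expect the only delicate point to be the bookkeeping of $\F_q$-points on the singular members, in particular verifying $w_1,w_2\notin\ell_P\cup\ell_R$ and that the distribution is insensitive to the characteristic: in characteristic $2$ the diagonal points $v,w_1,w_2$ become collinear, but this changes neither the number of singular members nor the point count.
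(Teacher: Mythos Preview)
Your argument is correct and self-contained, but the route for the non-existence half differs from the paper's. The paper obtains non-existence over $\F_2$ and $\F_3$ by working on the cubic surface $X$ itself: it quotes from Lemma~\ref{Type3} that a type $(c_2)$ cubic has $q^2+3q+1$ rational points while at least $6q+5$ of them must lie on the $27$ lines, and for $q=2,3$ the strict inequality $q^2+3q+1<6q+5$ gives an immediate contradiction. Your argument instead stays on $\Pro^2_{\F_q}$ and analyses the pencil of conics through the four base points $p_1,\dots,p_4$, reducing general position to the single statement ``$p_5$ and $p_6$ lie on two distinct smooth members'', so that the count $q-2$ of smooth members settles both directions at once. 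For the existence half the two proofs coincide: the paper's ``pick $q_1\in Q_1$ and $q_2\in Q_2$ on two smooth conics through $p_1,\dots,p_4$'' is exactly your ``pick $p_5,p_6$ on two distinct smooth members of the pencil''. What you gain is a uniform, elementary treatment that does not appeal to the line count on the cubic or to Remark~\ref{Linespoints}; what the paper gains is brevity, since the required inequality was already established in the proof of Lemma~\ref{Type3}. Your bookkeeping (in particular $w_1,w_2\notin\ell_P\cup\ell_R$, the reduction of the collinear triple $\{p_i,p_5,p_6\}$ via Galois-invariance of $\overline{p_5p_6}$, and the characteristic-$2$ remark) is all correct.
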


\begin{proof}

A cubic surface of type $(c_2)$ is the blowup of $\Pro^2_{\F_q}$ at two $\F_q$-points and two points of degree $2$.

In the proof of Lemma \ref{Type3} it is shown that there are $q^2 + 3q + 1$ points defined over $\F_q$ on $X$ and at least $6q + 5$ points defined over $\F_q$ lying on the lines on $X$. Therefore this surface does not exist for $q = 2$ and $q = 3$, since \mbox{$q^2 + 3q + 1 < 6q + 5$} in these cases.

For $q \geqslant 4$ the cubic surface $X$ can be constructed in the following way. Let $p_1$, $p_2$ and $p_3$, $p_4$ be two pairs of conjugate geometric points defined over $\F_{q^2}$ in general position on~$\Pro^2_{\F_q}$. Consider two smooth conics $Q_1$ and $Q_2$ defined over $\F_q$ passing through $p_1$, $p_2$, $p_3$ and $p_4$. Let $q_1$ be an $\F_q$-point on $Q_1$ and $q_2$ be an $\F_q$-point on $Q_2$. We show that the points $p_i$ and $q_j$ are in general position.

Any line passing through two points $p_i$ and $p_j$ does not contain other points lying on~$Q_1$ or $Q_2$. Therefore such line passes through exactly two points from the set $\{p_i, q_j\}$. If a point $p_i$ lie on a line $L$ passing through $q_1$ and $q_2$ then the conjugate of $p_i$ lie on this line since $q_1$ and $q_2$ are $\F_q$-points. Hence $L$ meets $Q_1$ at three points, that is impossible.

Thus the points $p_i$ and $q_j$ are in general position. The blowup of $\Pro^2_{\F_q}$ at these points is a cubic surface of type $(c_2)$ by Theorem \ref{GenPos} and Lemma \ref{blowup}.

\end{proof}

\begin{lemma}
\label{TypeC9}
A cubic surface of type $(c_9)$ does not exist over $\F_2$ and exists over other finite fields.
\end{lemma}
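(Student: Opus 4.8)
The plan is to identify a cubic surface of type $(c_9)$ with the blowup of $\Pro^2_{\F_q}$ at two points of degree $3$. In this conjugacy class a generator of $\Gamma$ permutes the six exceptional classes $E_1,\dots,E_6$ as a product of two $3$-cycles $(123)(456)$, so by Lemma \ref{blowup} the Carter graph of such a blowup is $A_2\times A_2=A_2^2$, which matches $(c_9)$; the invariant lattice has rank $\rho(\XX)^{\Gamma}=3$, generated by $L$, $E_1+E_2+E_3$ and $E_4+E_5+E_6$. Since the two triples of exceptional classes are Galois orbits, the contraction to $\Pro^2_{\F_q}$ is defined over $\F_q$, so every surface of type $(c_9)$ is of this form. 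Thus the whole question reduces, via Theorem \ref{GenPos}, to deciding for which $q$ there exist two points of degree $3$ in general position on $\Pro^2_{\F_q}$.

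For $q\geqslant 3$ I would argue by counting, in the spirit of Lemma \ref{Type23}. Fix one point of degree $3$, say $Q=\{q_1,q_2,q_3\}$, whose geometric points are not collinear, and count the points of degree $3$ $P=\{p_1,p_2,p_3\}$ that fail to be in general position with $Q$. The failures are of three kinds: $P$ is co-conical with $Q$; some $p_i$ lies on a line $\overline{q_jq_k}$; some $q_k$ lies on a line $\overline{p_ip_j}$. The conics through $Q$ form a net $\Pro^2_{\F_q}$ of $q^2+q+1$ members, each carrying at most $\frac{q^3-q}{3}$ points of degree $3$, so the first kind is bounded by $(q^2+q+1)\frac{q^3-q}{3}$; the remaining two kinds are each bounded by $q^3+1$ (a point of $P$ on the line $\overline{q_2q_3}$ gives at most the $q^3+1$ choices of an $\F_{q^3}$-point on that line, while a point of $Q$ on a line $\overline{p_ip_j}$ is controlled by the $q^3+1$ lines through $q_1$, each determining at most one conjugate triangle of $p_i$'s). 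Comparing with the total number $\frac{q^6+q^3-q^2-q}{3}$ of points of degree $3$, the existence of a good $P$ follows once
$$q^6 - q^5 - q^4 - 5q^3 - 6 > 0,$$
which holds for all $q\geqslant 3$ (and fails for $q=2$). The blowup of $\Pro^2_{\F_q}$ at $Q$ and such a $P$ is then the desired surface by Theorem \ref{GenPos} and Lemma \ref{blowup}.

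The remaining, and most delicate, point is non-existence over $\F_2$, where the counting bound is too weak and an exact argument is required. Over $\F_2$ there are exactly $\frac{2^6+2^3-2^2-2}{3}=22$ points of degree $3$; of these, $7\cdot 2=14$ are collinear (each of the $7$ lines over $\F_2$ carries two points of degree $3$), leaving $8$ non-collinear ones, and only the latter can enter a general-position configuration. The key observation is that through a non-collinear point of degree $3$ \emph{every} conic is smooth: a reducible or double conic through it would, by tracing the Frobenius action on its three geometric points, force them to be collinear or to meet the singular locus, a contradiction. Hence the net $\Pro^2(\F_2)$ of $7$ conics through such a point consists entirely of smooth conics, and each smooth conic over $\F_2$ carries exactly two points of degree $3$; moreover two distinct conics through the point share no further point of degree $3$, so the point is co-conical with $7$ \emph{distinct} others, that is, with all of the remaining non-collinear points. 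Consequently any two distinct points of degree $3$ over $\F_2$ lie on a common smooth conic, so no two are in general position, and a surface of type $(c_9)$ cannot exist. The main obstacle is exactly this $q=2$ combinatorial coincidence — proving that all conics through a non-collinear triple are smooth and that the incidence count then forces every admissible pair to be co-conical; the $q\geqslant 3$ direction is routine once the three bad loci above are bounded.
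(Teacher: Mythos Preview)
Your treatment of $q=2$ matches the paper's almost verbatim: both arguments count the $22$ degree-$3$ points on $\Pro^2_{\F_2}$ as $14$ collinear ones plus $P$ plus the $7$ other points carried by the $7$ smooth conics through $P$, and conclude that every non-collinear point of degree $3$ is co-conical with $P$.

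For $q\geqslant 3$ the paper takes a different route and simply invokes the explicit construction in \cite[Proposition~6.1]{RT17}, whereas you attempt a self-contained count in the style of Lemma~\ref{Type23}. The idea is sound, but there is a gap in your list of failures: you omit the possibility that $P$ itself is collinear, i.e.\ that $p_1,p_2,p_3$ lie on an $\F_q$-line. Such $P$ certainly violate general position, yet they are \emph{not} caught by any of your three kinds: they are not on any conic through $Q$ (every $\F_q$-conic through a non-collinear $Q$ is smooth, as you observe, and a line meets it in at most two points; and no reducible conic contains the non-collinear $Q$), nor need any $p_i$ meet a side of the $Q$-triangle, nor any $q_k$ a side of the $P$-triangle. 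There are $(q^2+q+1)\frac{q^3-q}{3}$ such collinear $P$, and once you add this term your inequality becomes
\[
q^6-2q^5-2q^4-5q^3+q^2+q-6>0,
\]
which is $-48$ at $q=3$. So your crude bounds do not settle $q=3$. A sharper count does work there (for instance one can show that exactly $91$ non-collinear $P$ are co-conical with $Q$, and that each of the two line conditions contributes exactly $13$ non-collinear $P$, disjoint from the co-conical ones, leaving room among the $143$ candidates), but this requires substantially more care than what you wrote; alternatively one can simply exhibit an explicit pair for $q=3$. Your bound of $q^3+1$ for each line condition is also stated without justification and, while essentially correct, deserves the kind of Frobenius-orbit argument you sketch only in passing.
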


\begin{proof}

A cubic surface of type $(c_9)$ is the blowup of a $\Pro^2_{\F_q}$ at two points of degree $3$.

Let $q = 2$ and $P$ be a point on $\Pro^2_{\F_2}$ of degree $3$ in general position. There are $7$ smooth conics defined over $\F_2$ passing through $P$ and $7$ lines defined over $\F_2$ on $\Pro^2_{\F_2}$. These lines and conics contain $22$ points of degree three: $14$ points on lines, $7$ points that differs from~$P$ on conics passing through $P$, and $P$. But there are exactly $22$ points of degree $3$ on~$\Pro^2_{\F_2}$. Therefore there are no two points of degree $3$ in general position on $\Pro^2_{\F_2}$.

For $q \geqslant 3$ a cubic surface of type $(c_9)$ is constructed in the proof of \cite[Proposition~6.1]{RT17}.

\end{proof}

\begin{lemma}
\label{TypeC21}
A cubic surface of type $(c_{21})$ exists over all finite fields.
\end{lemma}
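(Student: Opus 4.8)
First I would identify the correct point configuration. The conjugacy class $(c_{21})$ has Carter graph $A_2 \times A_1$ and invariant Picard number $\rho(\XX)^{\Gamma} = 4$, and it is the unique class in $W(E_6)$ with this graph. By Lemma \ref{blowup}, blowing up $\Pro^2_{\F_q}$ at an $\F_q$-point, a point of degree $2$ and a point of degree $3$ multiplies the Carter graphs $A_0$, $A_1$, $A_2$ and produces exactly $A_2 \times A_1$; moreover, letting $\Fr$ fix $L$ and act on $E_1, \ldots, E_6$ with cycle type $1+2+3$, one computes the eigenvalues on $K_X^{\perp}$ to be $1, 1, 1, -1, \omega, \omega^2$, matching the fourth column of Table \ref{table3}. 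So the task is to exhibit, over every $\F_q$, an $\F_q$-point $r$, a point $\{s_1, s_2\}$ of degree $2$ and a point $\{t_1, t_2, t_3\}$ of degree $3$ in general position.

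The construction I would use puts all five non-rational points on a single smooth conic. Fix a smooth conic $Q$ defined over $\F_q$, so that $Q \cong \Pro^1_{\F_q}$. For every $q \geq 2$ the line $\Pro^1_{\F_q}$ carries $(q^2 - q)/2 \geq 1$ points of degree $2$ and $(q^3 - q)/3 \geq 1$ points of degree $3$, so I may pick a degree-$2$ point $\{s_1, s_2\}$ and a degree-$3$ point $\{t_1, t_2, t_3\}$ on $Q$; these are five distinct geometric points of $Q$. Let $\ell$ be the line through $s_1$ and $s_2$, which is defined over $\F_q$. Finally I would choose the $\F_q$-point $r$ so that $r \notin Q$ and $r \notin \ell$.

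The heart of the argument — and the step I expect to require the most care — is checking that these six points are in general position, i.e. that no three are collinear and that no conic passes through all six. The conic condition is immediate: the five points on $Q$ already determine $Q$ uniquely, since two distinct conics meet in at most four points, so if $r \notin Q$ then no conic contains all six. For collinearity, no three of the five points on $Q$ are collinear, since a line meets $Q$ in at most two points; in particular $t_1, t_2, t_3$ are not collinear. It then remains to exclude collinear triples containing $r$. The triple $\{r, s_1, s_2\}$ is ruled out by $r \notin \ell$. If $\{r, t_1, t_2\}$ were collinear, then applying $\Fr$ shows $\{r, t_2, t_3\}$ and $\{r, t_3, t_1\}$ are collinear too, so $r$ lies on the two distinct lines through $t_1, t_2$ and through $t_1, t_3$, forcing $r = t_1$, which is impossible. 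If $\{r, s_1, t_1\}$ were collinear, then applying $\Fr^3$ (which fixes $r$ and $t_1$ while swapping $s_1$ and $s_2$) shows $\{r, s_2, t_1\}$ is collinear as well; the two lines share $r$ and $t_1$, so they coincide and contain both $s_1$ and $s_2$, forcing $r \in \ell$, a contradiction. All twenty triples reduce to these cases, so general position holds, and Theorem \ref{GenPos} with Lemma \ref{blowup} delivers a cubic surface of type $(c_{21})$.

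It remains only to confirm that a suitable $r$ exists for every $q$, and here the uniformity of the construction becomes visible: the rational points to avoid are the $q + 1$ points of $Q$ and the $q + 1$ points of $\ell$, and these sets are disjoint because $Q \cap \ell = \{s_1, s_2\}$ consists of non-rational points. Since $q^2 + q + 1 > 2(q + 1)$ for all $q \geq 2$, such an $r$ always exists, and no separate analysis of small $q$ is needed. The only genuine obstacle is the general-position bookkeeping in the previous paragraph; once the Frobenius action collapses the collinearity conditions to the two constraints $r \notin Q$ and $r \notin \ell$, the result follows over all finite fields at once.
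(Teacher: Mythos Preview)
Your proposal is correct and follows essentially the same approach as the paper: place the degree-$2$ and degree-$3$ points on a smooth conic $Q$, choose the $\F_q$-point $r$ off $Q$ and off the secant line $\ell$ through the degree-$2$ pair, and conclude by the count $q^2+q+1 > 2(q+1)$. The only cosmetic difference is that the paper disposes of the mixed collinearity cases in one stroke by observing that every secant $L_{ij}$ other than $\ell$ has a nontrivial Frobenius orbit and hence contains no $\F_q$-point, whereas you run the equivalent Frobenius argument case by case; the content is identical.
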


\begin{proof}

A cubic surface of type $(c_{21})$ is the blowup of $\Pro^2_{\F_q}$ at an $\F_q$-point, a point of degree~$2$ and a point of degree $3$.

Consider a smooth conic $Q$ on $\Pro^2_{\F_q}$. Let $q_1$ and $q_2$ be a pair of conjugate geometric points on $Q$ defined over $\F_{q^2}$, and $q_3$, $q_4$, $q_5$ be three conjugate geometric points on $Q$ defined over $\F_{q^3}$. Then any line $L_{ij}$ passing through two geometric points $q_i$ and $q_j$ except~$L_{12}$ does not contain $\F_q$-points.

There are $2q + 2$ points defined over $\F_q$ on $Q \cup L_{12}$, and $q^2 + q + 1$ points defined over~$\F_q$ on $\Pro^2_{\F_q}$. One has $q^2 + q + 1 > 2q + 2$ for $q \geqslant 2$. Therefore there is an $\F_q$-point $p$ on $\Pro^2_{\F_q}$ not lying on $L_{ij}$ and $Q$.

Thus the points $p$ and $q_i$ are in general position. The blowup of $\Pro^2_{\F_q}$ at these points is a cubic surface of type $(c_{21})$ by Theorem \ref{GenPos} and Lemma \ref{blowup}.

\end{proof}

For the three types $(c_{12})$, $(c_{13})$, and $(c_{14})$ of minimal cubic surfaces considered in \cite{RT17} there are some restrictions on $q$. More precisely, cubic surfaces of types $(c_{12})$ and $(c_{13})$ are constructed for any odd $q$, and cubic surfaces of type $(c_{14})$ are constructed for any $q$, such that $q = 6k + 1$. In the following lemma we construct cubic surfaces of types $(c_{12})$ and $(c_{13})$ for even $q$. Also for even $q$ this lemma give a construction of cubic surfaces of type $(c_{11})$, that are considered in \cite{SD10}.

\begin{lemma}
\label{TypesC12C13}
Cubic surfaces of types $(c_{12})$ and $(c_{13})$ exist over all finite fields.
\end{lemma}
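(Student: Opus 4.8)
The plan is to reduce all three classes to a single cyclic symmetry of one auxiliary surface and then to produce the types by twisting, rather than by blowing up $\Pro^2_{\F_q}$: since $(c_{11})$, $(c_{12})$ and $(c_{13})$ all have $\rho(\XX)^{\Gamma} = 1$ (Table \ref{table3}), these surfaces are minimal and cannot be obtained as blowups of $\Pro^2_{\F_q}$ at rational points. First I would record the arithmetic relation between the three conjugacy classes. Reading off the eigenvalues in Table \ref{table3}, the class $(c_{13})$ is the Coxeter element $g$ of $W(E_6)$, of order $12$; squaring its eigenvalues $\ii\omega, \ii\omega^2, -\ii\omega, -\ii\omega^2, \omega, \omega^2$ gives the multiset $-\omega, -\omega, -\omega^2, -\omega^2, \omega, \omega^2$, which is the class $(c_{12})$, and raising them to the fourth power gives $\omega, \omega, \omega, \omega^2, \omega^2, \omega^2$, which is the class $(c_{11})$. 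Hence $(c_{13})$, $(c_{12})$ and $(c_{11})$ are realized by $g$, $g^2$ and $g^4$ inside one cyclic subgroup $\langle g \rangle \subset W(E_6)$ of order $12$. For odd $q$ the surfaces of types $(c_{12})$ and $(c_{13})$ are already constructed in \cite{RT17}, so it remains to treat even $q$, and the argument will simultaneously produce type $(c_{11})$, in agreement with \cite{SD10}.

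The main step is to exhibit a smooth cubic surface $X_1$ over $\F_q$, with $q$ even, that is split (all $27$ lines defined over $\F_q$, so $\Fr$ acts trivially on $\Pic(\XX_1)$) and admits a cyclic group $\langle \gamma \rangle$ of $\F_q$-automorphisms of order $12$ whose image in $W(E_6)$ is the Coxeter element $g$. Granting this, I would apply Proposition \ref{dPtwist} to the groups $\langle \gamma \rangle$, $\langle \gamma^2 \rangle$ and $\langle \gamma^4 \rangle$ in turn: since the Frobenius of the split surface acts trivially on the Picard lattice, the form produced by Proposition \ref{dPtwist} has Frobenius acting as $g$, $g^2$ and $g^4$ respectively. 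By the eigenvalue computation above these twists are cubic surfaces of types $(c_{13})$, $(c_{12})$ and $(c_{11})$, which is exactly what we want. Note that $\gamma$ being defined over $\F_q$ makes the cocycle $\Fr^n \mapsto \gamma^n$ continuous, so each twist is genuinely defined over $\F_q$.

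To construct $X_1$ I would start from a diagonal cubic, for instance a smooth model of $x_0^3 + x_1^3 + x_2^3 + x_3^3 = 0$, which remains smooth in characteristic $2$ (its partials are $x_i^2$, with no common zero in $\Pro^3_{\F_q}$), and take $\gamma$ to be the composition of a coordinate $4$-cycle with a suitable element of the order-$3$ diagonal torus, arranged so that $\gamma$ has order $12$ and its image in $W(E_6)$ is a Coxeter element. Verifying that this composite is defined over $\F_q$, that the chosen model is split over $\F_q$, and that its image in $W(E_6)$ is genuinely the Coxeter class is where the real work lies; the behaviour of $\omega = \xi_3$ forces a case distinction between $q \equiv 1 \pmod 3$, where $\omega \in \F_q$ and both the torus and the $27$ lines are already rational, and $q \equiv 2 \pmod 3$, where one must either replace $X_1$ by an $\F_q$-form on which the lines remain rational or absorb the nontrivial action of $\omega \mapsto \omega^2$ into the twisting cocycle.

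I expect this last point --- producing, for every even $q$ and in particular for $q \equiv 2 \pmod 3$, a split cubic surface over $\F_q$ together with an $\F_q$-rational order-$12$ automorphism inducing the Coxeter element --- to be the main obstacle, since it is precisely the feature that is unavailable in the odd-$q$ constructions of \cite{RT17} and must be arranged by hand in characteristic $2$. Once $X_1$ and $\gamma$ are in place, the passage to the three types is formal, via Proposition \ref{dPtwist} together with the relation $g \mapsto g^2 \mapsto g^4$ established in the first paragraph, and combining this with \cite{RT17} for odd $q$ yields the existence of $(c_{12})$ and $(c_{13})$ over all finite fields.
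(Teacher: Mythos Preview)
Your twisting strategy is the right idea and is close in spirit to the paper's argument, but the concrete plan has a genuine gap at the most important point, namely $q=2$.

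First, a split cubic surface over $\F_2$ does not exist: by Theorem~\ref{DP3} (or \cite{SD10}), cubic surfaces of type $(c_1)$ fail to exist over $\F_2$, $\F_3$ and $\F_5$. So the scheme ``take a split $X_1$, twist by an $\F_q$-rational Coxeter automorphism'' cannot even get started over $\F_2$, which is exactly the field you most need to handle. Your fallback suggestion of ``absorbing the nontrivial Galois action into the cocycle'' is where all the content lies, and as written it is not a proof but a restatement of the problem. Secondly, your specific candidate, the Fermat cubic $x_0^3+x_1^3+x_2^3+x_3^3=0$, has automorphism group $(\Z/3\Z)^3\rtimes S_4$ (in characteristic $\ne 3$), and a short computation shows that any element whose $S_4$-component is a $4$-cycle already has order $4$ in this semidirect product; hence the Fermat cubic carries \emph{no} automorphism of order $12$, let alone one inducing the Coxeter class of $W(E_6)$.

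The paper sidesteps both obstacles by decoupling the order-$3$ and order-$4$ pieces. It writes down, for every even $q$, an explicit one-parameter family of smooth cubics in $\Pro^3_{\F_q}$ carrying a unipotent order-$4$ automorphism $g:(x:y:z:t)\mapsto(x+y:y+z:z:t)$ defined over the prime field. The $g$-invariant hyperplane section $z=0$ splits as three lines, and one chooses the coefficients so that these three lines form a single Galois orbit; this forces the Frobenius image $h$ to have order divisible by $3$, and a short lattice argument pins $h$ down to one of $(c_{11})$, $(c_{12})$, $(c_{13})$. Then one twists not by an order-$12$ element but by the powers $\widetilde g,\widetilde g^{\,2},\widetilde g^{\,3}$ of the order-$4$ automorphism (via Proposition~\ref{dPtwist}) and checks, by tracking eigenvalues, that the four resulting surfaces realise $(c_{11})$, $(c_{12})$ and $(c_{13})$ among them. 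The crucial difference from your plan is that the order-$3$ part is supplied arithmetically (by the Galois action on a triple of conjugate lines) rather than geometrically, so no split model and no order-$12$ automorphism are ever needed.
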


\begin{proof}

For any odd $q$ cubic surfaces of types $(c_{12})$ and $(c_{13})$ are constructed in \cite[Subsection 5.3]{RT17} and \cite[Subsection 5.2]{RT17} respectively. We construct these types of cubics surfaces for any even $q$.

Let $q$ be even and $X$ be a cubic surface in $\Pro^3_{\F_q}$ given by the equation
$$
A(x^2z + xz^2 + y^3 + y^2z) + Byz(y+z) + Cz^3 + Dy(y+z)t + Ez^2t +Fzt^2 + t^3 = 0.
$$
\noindent One can check that the surface $X$ is smooth if and only if $A \ne 0$.

The automorphism $g$ of order $4$ given by
$$
g: (x : y : z : t) \mapsto (x + y : y + z : z : t)
$$
\noindent acts on $X$. The $g$-invariant hyperplane section $z = 0$ is the union of three lines
$$
Ay^3 + Dy^2t + t^3 = 0.
$$
\noindent Moreover, one can select $A$ and $D$ so that these three lines are conjugate. 

Therefore the group $\Gamma$ has order divisible by $3$ and commute with the image $\widetilde{g}$ of $g$ in $\Pic(\XX)$. In particular, an element $h'$ of order $3$ in $\Gamma$ commutes with $\widetilde{g}$. The element $\widetilde{g}h'$ has order $12$, thus the type of this element is either $c_{13}$, or $c_{24}$ (see Table \ref{table3}). Therefore the element $h'$ has type $c_6$ or $c_{11}$. By \cite[Lemma 2.10]{RT17} an element of type $c_6$ is conjugate to $(123) \in S_6 \subset W(E_6)$. Applying the notation of Remark \ref{cubicnotation} one can easily check that there are no $(123)$-orbits consisting of three meeting each other lines. Therefore $h'$ has type $c_{11}$ and $X$ is a minimal cubic surface of type $(c_{11})$, $(c_{12})$ or $(c_{13})$.

Let $h$ be a generator of $\Gamma$. Applying Proposition \ref{dPtwist} one can obtain cubic surfaces $X_2$, $X_3$ and $X_4$ such that the corresponding groups $\Gamma_2$, $\Gamma_3$ and $\Gamma_4$ in $W(E_6)$ are generated by $\widetilde{g}h$, $\widetilde{g}^2h$ and $\widetilde{g}^3h$ respectively. We want to show that there are a surface of type $(c_{11})$, a surface of type $(c_{12})$ and two surfaces of type $(c_{13})$ among the four surfaces $X$, $X_2$, $X_3$ and $X_4$.

If $X$ has type $(c_{11})$ or $(c_{12})$ then the surface $X_2$ has type $(c_{13})$, so for these two cases we can replace $X$ by $X_2$ and reduce these cases to the case when $X$ has type $(c_{13})$.

Assume that $X$ has type $(c_{13})$. The elements $h$ and $\widetilde{g}$ can be simultaneously diagonalized in $\mathrm{GL}\left(K_X^{\perp} \otimes \overline{\mathbb{Q}}\right)$, and $\widetilde{g}$ multiplies the eigenvalues of $h$ by $\ii$, $-\ii$, $\ii$, $-\ii$, $1$ and $1$. The element~$\widetilde{g}^2h$ has order $12$ and type $c_{13}$, therefore $\widetilde{g}$ trivially acts on the eigenvalues $\omega$ and $\omega^2$ of $h$. Thus all eigenvalues of the element $\widetilde{g}h$ are $\omega$ or $-\omega$, and this element has type $c_{11}$ or $c_{12}$. For these cases the element $\widetilde{g}^3h$ has type $c_{12}$ or $c_{11}$ respectively. Hence two of the surfaces $X$, $X_1$, $X_2$, and $X_3$ have type $(c_{13})$, one surface has type $(c_{12})$ and one surface has type $(c_{11})$.

\end{proof}


Now we prove Theorem \ref{DP3}.

\begin{proof}[Proof of Theorem \ref{DP3}]

For each type of cubic surfaces we give a reference, where this type is considered.

\begin{itemize}

\item The type $(c_1)$ is considered in \cite{SD10} (see also \cite[Corollary 3.3]{BFL16}).
\item The type $(c_2)$ is considered in Lemma \ref{TypeC2}.
\item The type $(c_3)$ is considered in the proof of Lemma \ref{Type10}.
\item The type $(c_4)$ is considered in the proof of \cite[Lemma 3.12]{Tr17}.
\item The type $(c_5)$ is considered in the proof of Lemma \ref{Type14}.
\item The type $(c_6)$ is considered in \cite[Subsection 3.1, case $a = 4$]{BFL16}.
\item The type $(c_9)$ is considered in Lemma \ref{TypeC9}.
\item The type $(c_{10})$ is considered in \cite[Propositions 6.1 and 6.5]{RT17}
\item The type $(c_{11})$ is considered in \cite{SD10}.
\item The type $(c_{12})$ is considered in Lemma \ref{TypesC12C13}.
\item The type $(c_{13})$ is considered in Lemma \ref{TypesC12C13}.
\item The type $(c_{14})$ is considered in \cite[Subsection 5.4]{RT17}.
\item The type $(c_{16})$ is considered in \cite{SD10} (see also \cite[Subsection 3.1, $a = 5$]{BFL16}).
\item The type $(c_{17})$ is considered in the proof of Lemma \ref{Type6}.
\item The type $(c_{18})$ is considered in \cite[Subsection 3.1, case $a = 3$]{BFL16}.
\item The type $(c_{19})$ is considered in \cite[Theorem 3.2]{Ry05}.
\item The type $(c_{21})$ is considered in Lemma \ref{TypeC21}.

\end{itemize}

The types $(c_7)$, $(c_8)$, $(c_{15})$, $(c_{20})$, $(c_{22})$, $(c_{23})$, $(c_{24})$ and $(c_{25})$ are considered in Remark~\ref{DP2blowdown}.

\end{proof}

\section{Del Pezzo surfaces of degree $4$}

In this section for each cyclic subgroup $\Gamma \subset W(D_5)$ we construct the corresponding del Pezzo surface of degree $4$ over $\F_q$ if it is possible, and show that such surfaces do not exist for other values of $q$. As a result we prove Theorem \ref{DP4}.

In Table \ref{table4} we collect some facts about conjugacy classes of elements in the Weyl group~$W(D_5)$. The Weyl group $W(D_5)$ is isomorphic to $\left(\Z /2\Z\right)^4 \rtimes S_5 \subset \mathrm{GL}_5(\Z)$. The elements of $\left(\Z /2\Z\right)^4$ change signes of even number of the coordinates, and the elements of~$S_5$ permute the coordinates (see \cite[Subsection 6.4]{DI09}). One can find the classification of conjugacy classes in $\left(\Z /2\Z\right)^4 \rtimes S_5$ in \cite[Table 3]{DI09}, but there are two missed classes (denoted by $(ab)\iota_{ac}$ and $(ab)\iota_{acde}$ in Table \ref{table4}). We give a corrected classification in Table~\ref{table4} and prove that there are no missed classed in Proposition \ref{nomistakeDP4}.

The first column is a conjugacy class in the notation of \cite[Subsection 6.4]{DI09}. The second column is the Carter graph corresponding to the conjugacy class (see \cite{Car72}). The third column is the order of element. The fourth column is the collection of eigenvalues of the action of element on $K_X^{\perp} \subset \Pic(\XX) \otimes \mathbb{Q}$. The fifth column is the invariant Picard number $\rho(\XX)^{\Gamma}$. The last column is the type of the corresponding conjugacy class (see Table \ref{table3}) of $W(E_6)$ after blowing up a del Pezzo surface of degree $4$ at an $\F_q$-point.

\begin{table}

\begin{center}
\begin{tabular}{|c|c|c|c|c|c|}
\hline
Class & Graph & Order & Eigenvalues & $\rho(\XX)^{\Gamma}$ & Blowup \\
\hline
$\mathrm{id}$ & $\varnothing$ & $1$ & $1$, $1$, $1$, $1$, $1$ & $6$ & $c_{1}$ \\
\hline
$(ab)$ & $A_1$ & $2$ & $-1$, $1$, $1$, $1$, $1$ & $5$ & $c_{16}$ \\
\hline
$(ab)(cd)$ & $A_1^2$ & $2$ & $-1$, $-1$, $1$, $1$, $1$ & $4$ & $c_{2}$ \\
\hline
$\iota_{ab}$ & $A_1^2$ & $2$ & $-1$, $-1$, $1$, $1$, $1$ & $4$ & $c_{2}$ \\
\hline
$(abc)$ & $A_2$ & $3$ & $\omega$, $\omega^2$, $1$, $1$, $1$ & $4$ & $c_{6}$ \\
\hline
$(ab)\iota_{cd}$ & $A_1^3$ & $2$ & $-1$, $-1$, $-1$, $1$, $1$ & $3$ & $c_{17}$ \\
\hline
$(abcd)$ & $A_3$ & $4$ & $\ii$, $-\ii$, $-1$, $1$, $1$ & $3$ & $c_{18}$ \\
\hline
$(ab)\iota_{ac}$ & $A_3$ & $4$ & $\ii$, $-\ii$, $-1$, $1$, $1$ & $3$ & $c_{18}$ \\
\hline
$(abc)(de)$ & $A_2 \times A_1$ & $6$ & $\omega$, $\omega^2$, $-1$, $1$, $1$ & $3$ & $c_{21}$ \\
\hline
$\iota_{abcd}$ & $A_1^4$ & $2$ & $-1$, $-1$, $-1$, $-1$, $1$ & $2$ & $c_{3}$ \\
\hline
$(ab)(cd)\iota_{ae}$ & $A_3 \times A_1$ & $4$ & $\ii$, $-\ii$, $-1$, $-1$, $1$ & $2$ & $c_{5}$ \\
\hline
$(ab)(cd)\iota_{ac}$ & $D_4(a_1)$ & $4$ & $\ii$, $\ii$, $-\ii$, $-\ii$, $1$ & $2$ & $c_{4}$ \\
\hline
$(abcde)$ & $A_4$ & $5$ & $\xi_5$, $\xi_5^2$, $\xi_5^3$, $\xi_5^4$, $1$ & $2$ & $c_{15}$ \\
\hline
$(abc)\iota_{de}$ & $A_2 \times A_1^2$ & $6$ & $\omega$, $\omega^2$, $-1$, $-1$, $1$ & $2$ & $c_{8}$ \\
\hline
$(abc)\iota_{ad}$ & $D_4$ & $6$ & $-\omega$, $-\omega^2$, $-1$, $-1$, $1$ & $2$ & $c_{7}$ \\
\hline
$(ab)\iota_{acde}$ & $A_3 \times A_1^2$ & $4$ & $\ii$, $-\ii$, $-1$, $-1$, $-1$ & $1$ & $c_{19}$ \\
\hline
$(abcd)\iota_{ae}$ & $D_5$ & $8$ & $\xi_8$, $\xi_8^3$, $\xi_8^5$, $\xi_8^7$, $-1$ & $1$ & $c_{20}$ \\
\hline
$(abc)(de)\iota_{ad}$ & $D_5(a_1)$ & $12$ & $-\omega$, $-\omega^2$, $\ii$, $-\ii$, $-1$ & $1$ & $c_{24}$ \\
\hline

\end{tabular}
\end{center}

\caption[]{\label{table4} Conjugacy classes of elements in $W(D_5)$}
\end{table}

\begin{proposition}
\label{nomistakeDP4}
All conjugacy classes of $\left(\Z /2\Z\right)^4 \rtimes S_5$ are listed in Table \ref{table4}.
\end{proposition}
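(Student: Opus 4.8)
The plan is to realise $W(D_5) \cong (\Z/2\Z)^4 \rtimes S_5$ as the group of signed permutations of five coordinates that flip the sign of an even number of them, embedded as the index-two subgroup of the hyperoctahedral group $H = (\Z/2\Z)^5 \rtimes S_5$, and then to apply the classification of conjugacy classes by \emph{signed cycle type} (see \cite{Car72}). Writing an element of $H$ as a product of disjoint signed cycles, each cycle has a length and a sign $\pm$, the sign being the product of the coordinate flips occurring along it; the $H$-conjugacy class is determined by the multiset of pairs (length, sign), equivalently by an ordered pair of partitions $(\lambda, \mu)$ with $|\lambda| + |\mu| = 5$, where $\lambda$ lists the lengths of the positive cycles and $\mu$ those of the negative cycles. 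Since the total number of coordinate flips of an element is congruent modulo $2$ to its number of negative cycles, an element lies in $W(D_5)$ precisely when $\mu$ has an even number of parts.

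Next I would pass from $H$ to $W(D_5)$ by the splitting criterion for type $D$: an $H$-class contained in $W(D_5)$ stays a single $W(D_5)$-class unless its centraliser in $H$ is already contained in $W(D_5)$, which happens exactly when $\mu = \varnothing$ and every part of $\lambda$ is even. This is where the parity of $n = 5$ is decisive: $\mu = \varnothing$ forces $|\lambda| = 5$, which admits no partition into even parts, so no $H$-class splits. Hence the conjugacy classes of $W(D_5)$ are in bijection with the admissible pairs $(\lambda, \mu)$.

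It then remains to enumerate and match. Grouping the pairs $(\lambda,\mu)$ with $|\lambda|+|\mu| = 5$ and $\mu$ having an even number of parts by the value of $|\mu| \in \{0,2,3,4,5\}$ yields $7 + 3 + 2 + 3 + 3 = 18$ pairs, and I would identify each with a row of Table \ref{table4} by reading off its eigenvalues --- a positive $k$-cycle contributing the $k$-th roots of unity and a negative $k$-cycle the roots of $z^k = -1$. I expect the only delicate point to be this final bookkeeping, in particular checking that the two classes omitted in \cite{DI09}, namely $(ab)\iota_{ac}$ (of type $\lambda = (1,1)$, $\mu = (2,1)$) and $(ab)\iota_{acde}$ (of type $\lambda = \varnothing$, $\mu = (2,1,1,1)$), do occur; note that $(ab)\iota_{ac}$ shares both its Carter graph $A_3$ and its eigenvalue collection $\ii, -\ii, -1, 1, 1$ with $(abcd)$, so these two are separated only by the signed cycle type and not by the invariants recorded in the table. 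The main obstacle is therefore not a hard computation but the careful handling of the $D$-type splitting criterion together with the exhaustive matching of signed cycle types to the listed classes.
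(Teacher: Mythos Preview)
Your argument is correct and complete, but it takes a genuinely different route from the paper's. The paper proceeds by brute enumeration: for each of the seven cycle types of the image under the natural surjection $f\colon (\Z/2\Z)^4 \rtimes S_5 \to S_5$, it lists the possible lifts up to $S_5$-conjugacy (these are determined by the $S_5$-orbit of the support of the $\iota$-part relative to the support of the permutation), and then reduces further by exhibiting explicit conjugations of the form $\iota_{ij}\,(\cdot)\,\iota_{ij}$ until only the representatives in Table~\ref{table4} remain. No general theory is invoked.

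Your approach instead embeds $W(D_5)$ as the index-two subgroup of the hyperoctahedral group $B_5$, uses the classical parametrisation of $B_5$-classes by signed cycle types $(\lambda,\mu)$, applies the $D$-type splitting criterion to see that nothing splits (since $|\lambda|=5$ admits no partition into even parts), and then counts. This is cleaner and more conceptual, and it generalises immediately to $W(D_n)$ for any odd $n$; the paper's method is more elementary and entirely self-contained, requiring no familiarity with signed cycle types or the splitting criterion. Both reach the same count of $18$ classes, and your identification of the two classes omitted in \cite{DI09} via their signed cycle types $\bigl((1,1),(2,1)\bigr)$ and $\bigl(\varnothing,(2,1,1,1)\bigr)$ is correct, as is your observation that $(ab)\iota_{ac}$ and $(abcd)$ are distinguished only by signed cycle type, not by Carter graph or eigenvalues. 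One minor simplification you could make: for odd $n$ the non-splitting is immediate from the fact that $-\mathrm{id}=\iota_{12345}\in H\setminus W(D_5)$ is central in $H$ and hence lies in every $H$-centraliser, so no $H$-class contained in $W(D_5)$ can split; you do not need the full criterion on the parts of $\lambda$.
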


\begin{proof}
Let $g$ be an element in $\left(\Z /2\Z\right)^4 \rtimes S_5$. The image of $g$ under the natural homomorphism $f: \left(\Z /2\Z\right)^4 \rtimes S_5 \rightarrow S_5$ belongs to one of the seven conjugacy classes in $S_5$ parametrised by Young diagrams: trivial, $(12)$, $(123)$, $(12)(34)$, $(1234)$, $(12345)$, $(123)(45)$.

If $f(g)$ is trivial then $g$ is trivial or $S_5$-conjugate to $\iota_{12}$ or $\iota_{1234}$. These three cases are listed in Table \ref{table4}.

If $f(g)$ is conjugate to $(12)$ then $g$ is $S_5$-conjugate to $(12)$, $(12)\iota_{12}$, $(12)\iota_{13}$, $(12)\iota_{34}$, $(12)\iota_{1234}$ or $(12)\iota_{1345}$ One has $\iota_{13}(12)\iota_{12}\iota_{13} = (12)$ and $\iota_{13}(12)\iota_{1234}\iota_{13} = (12)\iota_{34}$. The other four cases are listed in Table \ref{table4}.

If $f(g)$ is conjugate to $(123)$ then $g$ is $S_5$-conjugate to $(123)$, $(123)\iota_{12}$, $(123)\iota_{14}$, $(123)\iota_{45}$, $(123)\iota_{1234}$ or $(123)\iota_{1245}$. One has $\iota_{24}(123)\iota_{12}\iota_{24} = (123)$, $\iota_{34}(123)\iota_{1234}\iota_{34} = (123)\iota_{14}$ and~$\iota_{24}(123)\iota_{1245}\iota_{24} = (123)\iota_{45}$. The other three cases are listed in Table \ref{table4}.

If $f(g)$ is conjugate to $(12)(34)$ then $g$ is $S_5$-conjugate to $(12)(34)$, $(12)(34)\iota_{12}$, $(12)(34)\iota_{13}$, $(12)(34)\iota_{15}$, $(12)(34)\iota_{1234}$ or $(12)(34)\iota_{1345}$ One has $\iota_{15}(12)(34)\iota_{12}\iota_{15} = (12)(34)$, $\iota_{13}(12)(34)\iota_{1234}\iota_{13} = (12)(34)$ and $\iota_{35}(12)(34)\iota_{1345}\iota_{35} = (12)(34)\iota_{15}$. The other three cases are listed in Table \ref{table4}.

If $f(g)$ is conjugate to $(1234)$ then $g$ is $S_5$-conjugate to $(1234)$, $(1234)\iota_{12}$, $(1234)\iota_{13}$, $(1234)\iota_{15}$, $(1234)\iota_{1234}$ or $(1234)\iota_{1235}$. One has $\iota_{25}(1234)\iota_{12}\iota_{25} = (1234)$, \mbox{$\iota_{23}(1234)\iota_{13}\iota_{23} = (1234)$}, $\iota_{13}(1234)\iota_{1234}\iota_{13} = (1234)$ and $\iota_{35}(1234)\iota_{1235}\iota_{35} = (1234)\iota_{15}$. The other two cases are listed in Table \ref{table4}.

If $f(g)$ is conjugate to $(12345)$ then $g$ is $S_5$-conjugate to $(12345)$, $(12345)\iota_{12}$, $(12345)\iota_{13}$ or $(12345)\iota_{1234}$. One has $\iota_{1345}(12345)\iota_{12}\iota_{1345} = (12345)$, \mbox{$\iota_{23}(12345)\iota_{13}\iota_{23} = (12345)$} and~$\iota_{24}(12345)\iota_{1234}\iota_{24} = (12345)$. The remaining case is listed in Table \ref{table4}.

If $f(g)$ is conjugate to $(123)(45)$ then $g$ is $S_5$-conjugate to $(123)(45)$, $(123)(45)\iota_{12}$, $(123)(45)\iota_{14}$, $(123)(45)\iota_{45}$, $(123)(45)\iota_{1235}$ or $(123)(45)\iota_{1245}$. One has $\iota_{1345}(123)(45)\iota_{12}\iota_{1345} = (123)(45)$, $\iota_{1234}(123)(45)\iota_{45}\iota_{1234} = (123)\iota_{14}$, \mbox{$\iota_{34}(123)(45)\iota_{1235}\iota_{34} = (123)(45)\iota_{14}$} and $\iota_{24}(123)(45)\iota_{1245}\iota_{24} = (123)(45)$. The other two cases are listed in Table~\ref{table4}.

\end{proof}

\begin{remark}
\label{DP3blowdown}
The result of Theorem \ref{DP4} for del Pezzo surfaces of degree $4$ of types, that differ from $\mathrm{id}$, $(ab)(cd)$, $\iota_{ab}$, $\iota_{abcd}$, $(ab)(cd)\iota_{ae}$, $(ab)(cd)\iota_{ac}$ , immediately follows from Theorem \ref{DP3}. One can consider a del Pezzo surface of degree $3$ of the corresponding type (see the last column of Table \ref{table4}) and contract a line defined over $\F_q$.
\end{remark}

The type $\mathrm{id}$ is considered in \cite[Lemma 3.1]{BFL16}, and the types $\iota_{abcd}$ and $(ab)(cd)\iota_{ac}$ of del Pezzo surfaces of degree $4$ are considered in \cite{Ry05} and \cite{Tr17}. The type $(ab)(cd)$ is isomorphic to the blowup of $\Pro^2_{\F_q}$ at an $\F_q$-point and two points of degree $2$. We consider this type in the following lemma.

\begin{lemma}
\label{Type(ab)(cd)}
A del Pezzo surface of degree $4$ of type $(ab)(cd)$ does not exist over $\F_2$ and exists over other finite fields.
\end{lemma}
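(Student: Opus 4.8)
The plan is to treat the two regimes separately: for $q \geqslant 3$ I would exhibit an explicit configuration of points in general position, while for $q = 2$ I would prove that no such configuration can exist. Throughout I rely on the identification stated in the paragraph preceding the lemma, together with Theorem \ref{GenPos} and Lemma \ref{blowup}: a del Pezzo surface of degree $4$ of type $(ab)(cd)$ is precisely the blowup of $\Pro^2_{\F_q}$ at one $\F_q$-point and two points of degree $2$ in general position. Since only five points are blown up, being in general position for degree $4$ reduces to the single condition that no three of the five underlying geometric points lie on a line (the ``no six on a conic'' condition of Theorem \ref{GenPos} is vacuous for five points).

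For $q \geqslant 3$ the construction is immediate. I would fix a smooth conic $Q \subset \Pro^2_{\F_q}$, which is isomorphic to $\Pro^1_{\F_q}$ and hence carries $q + 1 \geqslant 4$ points of degree $1$ and $\tfrac{q^2 - q}{2} \geqslant 3$ points of degree $2$. Choosing an $\F_q$-point $p_0$ and two distinct points $P_1$, $P_2$ of degree $2$ on $Q$, the five underlying geometric points are distinct and all lie on $Q$; since a line meets $Q$ in at most two points, no three of them are collinear, so they are in general position. The blowup of $\Pro^2_{\F_q}$ at $p_0$, $P_1$, $P_2$ is then the required surface by Theorem \ref{GenPos} and Lemma \ref{blowup}.

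The heart of the matter, and the step I expect to be the main obstacle, is the non-existence over $\F_2$. Suppose such a surface existed; then there would be an $\F_2$-point $p_0$ and two points of degree $2$, say $P_1 = \{p_1, p_2\}$ and $P_2 = \{p_3, p_4\}$ with $p_2 = \Fr(p_1)$ and $p_4 = \Fr(p_3)$, in general position, lying respectively on the $\F_2$-lines $L_1$ and $L_2$ through the two conjugate pairs. General position forces $p_0 \notin L_1 \cup L_2$. The key observation is that in $\Pro^2$ there are exactly five lines defined over $\F_4$ passing through $p_0$, of which three are defined over $\F_2$ and the remaining two form a conjugate pair $\ell$, $\Fr(\ell)$ defined over $\F_4$ but not over $\F_2$; indeed $\Fr$ fixes $p_0$ and the three $\F_2$-lines through it, hence must interchange the other two. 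I would then note that the line $\overline{p_0 p_1}$ cannot be defined over $\F_2$, for otherwise it would be $\Fr$-invariant and contain both $p_1$ and $p_2 = \Fr(p_1)$, forcing it to equal $L_1$ and contradicting $p_0 \notin L_1$; thus $p_1$ lies on $\ell$ or $\Fr(\ell)$, say $p_1 \in \ell$, whence $p_2 \in \Fr(\ell)$. The same reasoning applied to $P_2$ (using $p_0 \notin L_2$) places $p_3$ on $\ell$ or on $\Fr(\ell)$. If $p_3 \in \ell$, then $p_0$, $p_1$, $p_3$ all lie on $\ell$; if $p_3 \in \Fr(\ell)$, then $p_4 = \Fr(p_3) \in \ell$ and $p_0$, $p_1$, $p_4$ all lie on $\ell$. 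In either case three of the five points are collinear, contradicting general position. Hence no surface of type $(ab)(cd)$ exists over $\F_2$.

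The only genuinely delicate point is the count of lines through $p_0$ defined over $\F_4$ together with the observation that a degree-$2$ point forces its two geometric components onto the single conjugate pair $\ell$, $\Fr(\ell)$; once this is in place, the pigeonhole collision — two degree-$2$ points competing for only two non-rational lines through $p_0$ — yields the contradiction directly, with no case analysis on the particular choice of $p_0$ or of the lines $L_1$, $L_2$.
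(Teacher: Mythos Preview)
Your proof is correct. The existence half (for $q \geqslant 3$) coincides with the paper's argument: both place the required points on a smooth conic and observe that no line can meet it in three points. The non-existence half over $\F_2$ is where the two proofs diverge.

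The paper invokes the characterisation, cited from \cite[Lemma 2.4]{BFL16}, that five geometric points of $\Pro^2$ are in general position if and only if they lie on a smooth conic, and then simply observes that a smooth conic over $\F_2$ (being a copy of $\Pro^1_{\F_2}$) carries only one point of degree $2$, so the required configuration cannot occur. Your argument instead works directly with lines: through the chosen $\F_2$-point there is a unique Frobenius-conjugate pair of $\F_4$-lines, and each degree-$2$ point is forced to sit on this pair, producing a collinearity. Your route is entirely self-contained and avoids the external citation, at the modest cost of a short case split; the paper's route is shorter but leans on the conic characterisation. Both reach the same conclusion with no gaps.
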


\begin{proof}

A del Pezzo surface of degree $4$ of type $(ab)(cd)$ is the blowup of a $\Pro^2_{\F_q}$ at an \mbox{$\F_q$-point} and two points of degree $2$. Five geometric points on $\Pro^2_{\F_q}$ are in general position if and only if they lie on a smooth conic by \cite[Lemma 2.4]{BFL16}.

There are no an $\F_q$-point and two points of degree $2$ on a smooth conic $Q$ in $\Pro^2_{\F_q}$ for~$q = 2$. For other values of $q$ one can blow up points of these degrees on $Q$ and get a del Pezzo surface of degree $4$ of type $(ab)(cd)$ by Theorem \ref{GenPos} and Lemma \ref{blowup}.
\end{proof}

The two remaining cases are the blowups of a quadric surface in $\Pro^3_{\F_q}$ at four geometric points. This surface is isomorphic to $\Pro^1_{\overline{\F}_q} \times \Pro^1_{\overline{\F}_q}$ over $\overline{\F}_q$. We denote by $\pi_1$ and $\pi_2$ the projections on the first and the second factors of $\Pro^1_{\overline{\F}_q} \times \Pro^1_{\overline{\F}_q}$. The Picard group $\Pic\left(\Pro^1_{\overline{\F}_q} \times \Pro^1_{\overline{\F}_q}\right)$ is generated by the classes $F_1$ and $F_2$ of fibres of $\pi_1$ and $\pi_2$ respectively. Thus any divisor~$D$ on $\Pro^1_{\overline{\F}_q} \times \Pro^1_{\overline{\F}_q}$ is linearly equivalent to $aF_1 + bF_2$. The pair $(a, b)$ is called \textit{bedegree} of $D$. For example the anticanonical class $-K_{\Pro^1_{\overline{\F}_q} \times \Pro^1_{\overline{\F}_q}}$ has bedegree $(2, 2)$.

The next well-known proposition defines points in general position on $\Pro^1_{\overline{\F}_q} \times \Pro^1_{\overline{\F}_q}$.

\begin{proposition}
\label{DP8GenPos}
Let $1 \leqslant d \leqslant 8$, and $p_1$, $\ldots$, $p_{8-d}$ be $8-d$ points on $\Pro^1_{\kka} \times \Pro^1_{\kka}$ such that
\begin{itemize}
\item no two lie on a fibre of $\pi_1$ or $\pi_2$;
\item no four lie on a curve of bedegree $(1, 1)$;
\item no six lie on a curve of bedegree $(1, 2)$ or $(2, 1)$;
\item for $d = 1$ the points are not on a singular curve of bedegree $(2, 2)$ with singularity at one of these points.
\end{itemize}
Then the blowup of $\Pro^1_{\kka} \times \Pro^1_{\kka}$ at $p_1$, $\ldots$, $p_{8-d}$ is a del Pezzo surface of degree $d$.

Moreover, any del Pezzo surface $\XX$ of degree $1 \leqslant d \leqslant 7$ over algebraically closed field $\kka$ is the blowup of such set of points.
\end{proposition}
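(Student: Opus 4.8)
The plan is to reduce the statement to Theorem \ref{GenPos} by means of the classical elementary link between $\Pro^1_{\kka}\times\Pro^1_{\kka}$ and $\Pro^2_{\kka}$. If $d = 8$ there is nothing to blow up and the assertion is trivial, since $\Pro^1_{\kka}\times\Pro^1_{\kka}$ is itself a del Pezzo surface of degree $8$. So assume $1 \leqslant d \leqslant 7$ and put $p = p_1$. Blowing up $p$ and then contracting the strict transforms $\widetilde{F}_1$, $\widetilde{F}_2$ of the fibres of $\pi_1$, $\pi_2$ through $p$ (each of self-intersection $-1$ after the blowup) produces $\Pro^2_{\kka}$ together with two distinguished points $q_1$, $q_2$, the images of $\widetilde{F}_1$, $\widetilde{F}_2$; the exceptional curve over $p$ becomes the line through $q_1$ and $q_2$. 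This gives an isomorphism $\mathrm{Bl}_p\left(\Pro^1_{\kka}\times\Pro^1_{\kka}\right)\cong\mathrm{Bl}_{q_1,q_2}\Pro^2_{\kka}$. First I would record the induced dictionary on Picard groups, namely $F_1 = L - E_2$, $F_2 = L - E_1$, and the class of the exceptional curve over $p$ equal to $L - E_1 - E_2$, whence $-K = 3L - E_1 - E_2$ on both sides. Under this identification the points $p_2,\ldots,p_{8-d}$ become points $q_3,\ldots,q_{9-d}$ of $\Pro^2_{\kka}$ lying off the three exceptional curves, and the blowup of $\Pro^1_{\kka}\times\Pro^1_{\kka}$ at $p_1,\ldots,p_{8-d}$ is identified with the blowup of $\Pro^2_{\kka}$ at $q_1,\ldots,q_{9-d}$.

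The second step is to translate the conditions of Proposition \ref{DP8GenPos} into those of Theorem \ref{GenPos} for $q_1,\ldots,q_{9-d}$, in each case distinguishing whether or not the contracted point $p$ lies on the relevant curve, since these subcases yield curves of different classes. A fibre of $\pi_1$ (respectively $\pi_2$) not through $p$ has class $L - E_2$ (respectively $L - E_1$), i.e. a line through $q_2$ (respectively $q_1$), so the fibre conditions become ``no three of the $q_i$ collinear with $q_1$ or $q_2$'', while a fibre through $p$ merely expresses distinctness of the points. A curve of bidegree $(1,1)$ not through $p$ has class $2L - E_1 - E_2$, a conic through $q_1$ and $q_2$, so ``no four on a $(1,1)$-curve'' becomes ``no six on a conic''; through $p$ its strict transform has class $L$, a line, giving ``no three of the $q_i$ on a line''. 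A curve of bidegree $(2,1)$ or $(1,2)$ through $p$ has strict transform of class $2L - E_2$ or $2L - E_1$, again a conic through six of the $q_i$, whereas one not through $p$ produces (for $d = 1$) a class $-K_{\XX} - E_i$ of self-intersection $-2$, exactly the obstruction recorded by the singular-cubic condition. Collecting the subcases, the hypotheses of the Proposition hold precisely when $q_1,\ldots,q_{9-d}$ are in general position in the sense of Theorem \ref{GenPos}, which then yields that the blowup is a del Pezzo surface of degree $d$.

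For the ``moreover'' part I would run the link in reverse. By Theorem \ref{GenPos} any del Pezzo surface $\XX$ of degree $1 \leqslant d \leqslant 7$ is the blowup of $\Pro^2_{\kka}$ at $9-d \geqslant 2$ points $q_1,\ldots,q_{9-d}$ in general position. Choosing $q_1$, $q_2$ and contracting the strict transform of the line through them, which is a $(-1)$-curve of class $L - E_1 - E_2$, yields $\Pro^1_{\kka}\times\Pro^1_{\kka}$: the images of $E_1$ and $E_2$ are two curves of self-intersection $0$ meeting at one point, hence the two rulings. The remaining points $q_3,\ldots,q_{9-d}$ together with the point $p$ produced by the link furnish $8-d$ points $p_1,\ldots,p_{8-d}$, and the dictionary read backwards shows they satisfy the conditions of the Proposition, exhibiting $\XX$ as the required blowup.

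The main obstacle is the bookkeeping in the second step: for each type of curve one must carefully separate the subcase where $p$ lies on it from the subcase where it does not, and then check that together the resulting conditions on $q_1,\ldots,q_{9-d}$ reproduce exactly the conditions ``no three on a line'' and ``no six on a conic'', neither more nor fewer. The most delicate point is the degree-one case, where the singular $(2,2)$-curve condition and the $(1,2)$- and $(2,1)$-conditions must all be shown to correspond to the single requirement that the $q_i$ do not lie on a cubic singular at one of them, with due attention to whether the singular point is $p$ or one of $q_1$, $q_2$.
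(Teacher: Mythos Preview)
The paper does not prove this proposition at all: it is introduced as ``the next well-known proposition'' and stated without proof. Your approach --- performing the elementary link $\Pro^1_{\kka}\times\Pro^1_{\kka}\dashrightarrow\Pro^2_{\kka}$ at $p_1$ and reducing to Theorem~\ref{GenPos} --- is the standard way to establish the result, and the dictionary you set up ($F_1=L-E_2$, $F_2=L-E_1$, $E_p=L-E_1-E_2$) and its use in translating the four bullet-point conditions into ``no three collinear / no six on a conic / no singular cubic'' is correct. In particular your handling of the $d=1$ bookkeeping is right: the $(1,2)$- and $(2,1)$-curves avoiding $p_1$ become cubics with a double point at $q_1$ or $q_2$, while the singular $(2,2)$-curves through $p_1$ give the remaining cases of the singular-cubic condition (double point at some $q_j$ with $j\geqslant 3$), and the $(2,2)$-curve singular at $p_1$ itself yields the ``six on a conic through neither $q_1$ nor $q_2$'' case.

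One small point worth tightening: you should note explicitly that because $p_j$ (for $j\geqslant 2$) lifts to a point of $X$ off $E_p$, the image $q_j$ never lies on the line through $q_1$ and $q_2$; this is what rules out the collinearity case $\{q_1,q_2,q_j\}$, which is not covered by either the fibre condition or the $(1,1)$-condition in your breakdown. With that observation added, the case analysis is complete and the equivalence with Theorem~\ref{GenPos} is exact.
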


\begin{definition}
\label{DP8GenPosdef}
As in Definition \ref{GenPosdef} we say that a collection of points on a smooth quadric $Q \subset \Pro^3_{\F_{\ka}}$ is \textit{in general position} if it satisfies the conditions of Proposition \ref{DP8GenPos}.
\end{definition}

Now we can consider the cases $\iota_{ab}$ and $(ab)(cd)\iota_{ae}$ of Table \ref{table4}.

\begin{lemma}
\label{TypeIab}
A del Pezzo surface of degree $4$ of type $\iota_{ab}$ does not exist over $\F_2$, $\F_3$, and exists over other finite fields.
\end{lemma}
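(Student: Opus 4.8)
The plan is to realise a del Pezzo surface of degree $4$ of type $\iota_{ab}$ as the blowup of the split quadric $Q=\Pro^1_{\F_q}\times\Pro^1_{\F_q}$ (a smooth quadric in $\Pro^3_{\F_q}$ whose two rulings are defined over $\F_q$) at two points of degree $2$ in general position in the sense of Proposition \ref{DP8GenPos}. By Lemma \ref{blowup} the resulting surface has Carter graph $A_1^2$, so its type is $\iota_{ab}$ or $(ab)(cd)$. First I would check that for this configuration the Frobenius permutes all $16$ lines in $8$ conjugate pairs, leaving no $\Gamma$-invariant line; since a surface of type $(ab)(cd)$ has four $\F_q$-rational lines, the type is forced to be $\iota_{ab}$. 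The key structural feature I extract is that such a surface has \emph{no} $\Gamma$-invariant line, and that $N_1=q^2+2q+1$.

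For the non-existence over $\F_2$ and $\F_3$ I would argue by contradiction, sidestepping the quadric combinatorics entirely. Suppose a surface $X$ of type $\iota_{ab}$ existed over $\F_q$ with $q\in\{2,3\}$. As just noted $X$ has no $\Gamma$-invariant line, so by Remark \ref{Linespoints} the number of its $\F_q$-points lying on lines equals the number $C$ of meeting pairs of conjugate lines; as the $16$ lines form $8$ conjugate pairs we have $C\leqslant 8<q^2+2q+1=N_1$, so some $\F_q$-point $P$ lies off every line. Blowing up $P$ then yields, by Corollary \ref{dPblowup} and Lemma \ref{blowup}, a cubic surface with Carter graph $A_1^2$, that is, a cubic surface of type $(c_2)$ (this is exactly the last column of Table \ref{table4}). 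But type $(c_2)$ does not exist over $\F_2$ or $\F_3$ by Lemma \ref{TypeC2}, a contradiction.

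For the existence over every $\F_q$ with $q\geqslant 4$ I would return to the construction and produce the two degree-$2$ points by counting. A degree-$2$ point $\{r,\Fr r\}$ with $r=(s,t)$ has its two geometric points on a common fibre of one of the projections exactly when $s\in\F_q$ or $t\in\F_q$; hence the usable points are those with $s,t\in\F_{q^2}\setminus\F_q$, of which there are $\tfrac12(q^2-q)^2$. Fix one such point $P_1$. The fibre conditions of Proposition \ref{DP8GenPos} rule out only those $P_2$ whose coordinates lie in the Galois orbits of the coordinates of $P_1$, at most $2(q^2-q)$ points. The only remaining condition is that the four points not lie on a common curve of bidegree $(1,1)$; such a curve would be $\Fr$-invariant, hence $\F_q$-rational, so the forbidden $P_2$ lie on one of the $q+1$ curves of bidegree $(1,1)$ through $P_1$ defined over $\F_q$ (each a smooth conic, since the coordinates of $P_1$ are not in $\F_q$), giving at most $(q+1)\tfrac{q^2-q}{2}$ forbidden points. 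A short computation gives
\[
\tfrac12(q^2-q)^2-2(q^2-q)-(q+1)\tfrac{q^2-q}{2}=\tfrac{q^2-q}{2}\bigl(q^2-2q-5\bigr)>0
\]
precisely when $q\geqslant 4$, so a second degree-$2$ point $P_2$ in general position with $P_1$ exists, and blowing up $Q$ at $P_1,P_2$ gives the desired surface.

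The hard part is the borderline value $q=3$: the displayed quantity is negative there, so the naive count is inconclusive, and in fact one can exhibit pairs showing that the four points are always forced onto a $(1,1)$-curve. This is exactly why I route the non-existence through the contraction to a type-$(c_2)$ cubic rather than through a direct count on the quadric. A secondary point demanding care is the identification of the type as $\iota_{ab}$ rather than $(ab)(cd)$, since the two share a Carter graph and the same eigenvalues; this is settled by the invariant count of $\Gamma$-fixed lines ($0$ for $\iota_{ab}$ versus $4$ for $(ab)(cd)$), which is also what makes the non-existence argument go through.
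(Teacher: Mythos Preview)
Your proof is correct but inverts the paper's strategy in an interesting way. For non-existence over $\F_2$ and $\F_3$, the paper argues directly on the quadric $\Pro^1_{\F_q}\times\Pro^1_{\F_q}$: for $q=2$ there are too few $\F_4$-fibres to host two usable degree-$2$ points, and for $q=3$ an explicit count shows that every second degree-$2$ point is forced onto a fibre or a $(1,1)$-curve through the first. You instead blow up an $\F_q$-point off the lines to produce a cubic of type $(c_2)$ and invoke Lemma~\ref{TypeC2}; this is cleaner and uniform in $q$. For existence over $\F_q$ with $q\geqslant 4$, the paper takes the short route of contracting a line on a type-$(c_2)$ cubic (which requires knowing which line to contract so as to land in $\iota_{ab}$ rather than $(ab)(cd)$), whereas you give a direct count on the quadric; your argument is more self-contained and makes the threshold $q^2-2q-5>0$ visible. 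The one point worth making explicit is that ``no $\Gamma$-invariant line'' is a property of the conjugacy class $\iota_{ab}$ in $W(D_5)$, so it holds for every surface of this type and not merely for the one you construct; since your non-existence argument uses this fact before any surface has been produced over the field in question, it is worth stating that the check via the explicit model (or a direct Weyl-group computation) pins down the action on the $16$ lines for the whole conjugacy class.
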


\begin{proof}

A del Pezzo surface of degree $4$ of type $\iota_{ab}$ is the blowup of $\Pro^1_{\F_q} \times \Pro^1_{\F_q}$ at two points of degree $2$.

For $q = 2$ there are only two geometric fibres of $\pi_1$ defined over $\F_4$ and not defined over~$\F_2$. Therefore one can not find two points of degree $2$ in general position for this case.

For $q = 3$ there are $18$ points of degree $2$ not lying in one geometric fibre of $\pi_1$ or $\pi_2$ on~$\Pro^1_{\F_3} \times \Pro^1_{\F_3}$, that are the points of degree $2$ in general position. Let $P$ be one of these points. Then the union of the two geometric fibres of $\pi_1$ and the two geomertic fibres of $\pi_2$ passing through the two geometric points corresponding to $P$ contains $10$ points of degree~$2$ in general position (including $P$). There are four curves of bedegree $(1, 1)$ defined over $\F_3$ and passing through $P$. These curves contain $8$ points of degree $2$ in general position that differ from $P$. Therefore any point of degree $2$ in general position, either lies on one of these curves, or lies in the geometric fibres of $\pi_1$ or $\pi_2$ containing $P$. Thus there are no pairs of points of degree $2$ in general position on $\Pro^1_{\F_3} \times \Pro^1_{\F_3}$.

For $q \geqslant 4$ one can consider a cubic surface of type $(c_{2})$ (this surface is constructed in Lemma \ref{TypeC2}), contract a line defined over $\F_q$ and get a del Pezzo surface of degree $4$ of type $\iota_{ab}$.

\end{proof}

\begin{lemma}
\label{Type(ab)(cd)Iae}
A del Pezzo surface of degree $4$ of type $(ab)(cd)\iota_{ae}$ does not exist over $\F_2$ and exists over other finite fields.
\end{lemma}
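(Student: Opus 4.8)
The plan is to use that a del Pezzo surface of degree $4$ of type $(ab)(cd)\iota_{ae}$ is the blowup of a \emph{non-split} smooth quadric $Q \subset \Pro^3_{\F_q}$ at a point of degree $4$ in general position, in the sense of Proposition \ref{DP8GenPos}. Over $\overline{\F}_q$ one has $Q \cong \Pro^1 \times \Pro^1$, and for the non-split form $\Fr$ interchanges the two rulings $F_1$, $F_2$; blowing up a single length-$4$ orbit $\{P_1, P_2, P_3, P_4\}$ yields a surface whose Carter graph is $A_1 \times A_3$ by Lemma \ref{blowup}, the $A_1$ coming from the ruling swap and the $A_3$ from the degree-$4$ point. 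Since $(ab)(cd)\iota_{ae}$ is the unique class in $W(D_5)$ with graph $A_3 \times A_1$ (Table \ref{table4}), every such blowup that is a del Pezzo surface has precisely this type. I would first note, from the eigenvalues in Table \ref{table4}, that a generator of $\Gamma$ has trace $0$ on $\Pic(\XX)\otimes\Q$, so by \eqref{Fpoints} a surface of this type carries $N_1 = q^2 + 1$ points over $\F_q$.

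For existence when $q \geqslant 3$ I would proceed by contraction from a cubic surface, exactly as in Lemma \ref{TypeIab}. By Theorem \ref{DP3} a cubic surface of type $(c_5)$ exists for every $q \geqslant 3$; concretely (cf. the proof of Lemma \ref{Type14}) it is the blowup of $\Pro^2_{\F_q}$ at a point of degree $2$ and a point of degree $4$, and the proper transform of the line through the two conjugate points forming the degree-$2$ point is a $(-1)$-curve defined over $\F_q$. Contracting this $\F_q$-line produces a del Pezzo surface of degree $4$, whose Carter graph is again $A_3 \times A_1$ by Lemma \ref{blowup}; hence it has type $(ab)(cd)\iota_{ae}$. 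This disposes of all $q \geqslant 3$.

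For nonexistence over $\F_2$ I would argue by contradiction using the line configuration read off the quadric model. Here $\Fr$ has order $4$, and the $16$ lines split into four $\langle\Fr\rangle$-orbits of length $4$: the four exceptional curves form one orbit, the eight proper transforms of ruling fibres through the $P_i$ form two orbits, and the four curves of bidegree $(1,1)$ through three of the $P_i$ form one orbit. Since $\Fr^2$ preserves each ruling and acts on $\{P_1,\ldots,P_4\}$ as a product of two transpositions, it fixes none of these $16$ curves, so there is no line defined over $\F_2$ or over $\F_4$. Thus $A = B = C = 0$ in the count of Remark \ref{Linespoints}, and no $\F_2$-point lies on a line. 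But $N_1 = q^2 + 1 = 5 > 0$, so some $\F_2$-point lies off the lines; blowing it up gives, by Corollary \ref{dPblowup} and Lemma \ref{blowup}, a cubic surface of type $(c_5)$ over $\F_2$. This contradicts Theorem \ref{DP3}, by which $(c_5)$ does not exist over $\F_2$; hence no surface of type $(ab)(cd)\iota_{ae}$ exists over $\F_2$.

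The main obstacle is the orbit bookkeeping in the last step: the argument collapses unless \emph{every} one of the $16$ lines lies in an orbit of length $4$, i.e. unless $\Fr^2$ fixes no line, for a single $\F_2$- or $\F_4$-rational line would put $\F_2$-points back on the lines. I expect this to be the only delicate point, and it can be checked either intrinsically from the non-split quadric picture above or by computing the action of a representative of $(ab)(cd)\iota_{ae}$ on the $16$ line classes directly. Everything else is routine, relying only on the already-established existence of cubic surfaces of type $(c_5)$ and on the point counts furnished by \eqref{Fpoints} and Remark \ref{Linespoints}.
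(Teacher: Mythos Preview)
Your argument is correct, and for $q\geqslant 3$ it coincides with the paper's: both contract an $\F_q$-line on a cubic surface of type $(c_5)$ (built in Lemma~\ref{Type14}) to obtain the required del Pezzo surface of degree~$4$.

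For $q=2$ the two proofs diverge. The paper works directly on the non-split quadric $Q$ with $\rho(Q)=1$: it counts all $66$ points of degree~$4$ on $Q$ and checks that each of them violates one of the conditions of Proposition~\ref{DP8GenPos} (lying on a pair of $\F_4$-rational fibres, on a $(1,1)$-curve through three of the five $\F_2$-points, or forming a ``bad'' four-tuple where the fibres through the geometric points are Galois-permuted), so no degree-$4$ point is in general position. Your route is instead a contradiction via the cubic side: from the orbit analysis (all four $\langle\Fr\rangle$-orbits on the $16$ lines have length~$4$, hence no line is $\F_4$-rational) and the point count $N_1=q^2+1=5$ you produce an $\F_2$-point off the lines, blow it up to a cubic of type $(c_5)$, and invoke the non-existence of $(c_5)$ over $\F_2$ already established in Lemma~\ref{Type14} (recorded in Theorem~\ref{DP3}). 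Your orbit check is sound: since the type fixes the $W(D_5)$-conjugacy class, the quadric-model computation you give determines the orbit pattern once and for all; and because on a del Pezzo surface of degree $\geqslant 4$ at most two lines pass through a point (Remark~\ref{Linespoints}), any line through an $\F_2$-point would have to be $\F_4$-rational, which you have excluded. Thus there is no logical circularity and no gap. The paper's argument is self-contained, while yours is shorter and pleasantly uniform with the $q\geqslant 3$ step, reusing the $(c_5)$ link in both directions.
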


\begin{proof}

A del Pezzo surface of degree $4$ of type $(ab)(cd)\iota_{ae}$ is the blowup of a conic $Q \subset \Pro^3_{\F_q}$, such that $\rho(Q) = 1$ at a point of degree $4$.

For $q = 2$ there are five $\F_2$-points and $66$ points of degree $4$ on $Q$. There are five pairs defined over $\F_4$ conjugate fibres of $\pi_1$ and $\pi_2$ each containing $6$ points of degree $4$, ten curves of bedegree $(1, 1)$ passing through three $\F_2$-points each containing $3$ points of degree $4$, and six points of degree $4$, that are the intersection of two geometric fibres $F_{11}$ and $F_{12}$ of $\pi_1$ and two geometric fibres $F_{21}$ and $F_{22}$ of $\pi_2$ such that the fibres $F_{11}$, $F_{12}$, $F_{21}$ and $F_{22}$ are permuted by the Galois group $\Gal\left(\F_{16}/\F_2\right)$. One has $5 \cdot 6 + 10 \cdot 3 + 6 = 66$. Thus there is no a point of degree $4$ in general position on $Q$ for $q = 2$.

For $q \geqslant 3$ one can consider a cubic surface of type $(c_{5})$ (this surface is constructed in the proof of Lemma \ref{Type14}), contract a line defined over $\F_q$ and get a del Pezzo surface of degree $4$ of type $(ab)(cd)\iota_{ae}$.

\end{proof}

Now we prove Theorem \ref{DP4}.

\begin{proof}[Proof of Theorem \ref{DP4}]

For each type of del Pezzo surfaces of degree $4$ we give a reference, where this type is considered.

\begin{itemize}

\item The type $\mathrm{id}$ is considered in \cite[Lemma 3.1]{BFL16}.
\item The type $(ab)(cd)$ is considered in Lemma \ref{Type(ab)(cd)}.
\item The type $\iota_{ab}$ is considered in Lemma \ref{TypeIab}.
\item The type $\iota_{abcd}$ is considered in \cite[Theorem 3.2]{Ry05} and \cite[Theorem 2.5]{Tr17}.
\item The type $(ab)(cd)\iota_{ae}$ is considered in Lemma \ref{Type(ab)(cd)Iae}.
\item The type $(ab)(cd)\iota_{ac}$ is considered in \cite[Theorem 3.2]{Ry05}.

\end{itemize}

The other types are considered in Remark~\ref{DP3blowdown}.

\end{proof}

\section{Open questions}

In this section we discuss open questions that arise for constructing del Pezzo surfaces of degree $1$ over finite fields.

By \cite[Table 11]{Car72} there are $112$ types of del Pezzo surfaces of degree $1$. This number is greater than the sum of the numbers of del Pezzo surfaces types of degrees $2$, $3$ and $4$. For $30$ types of del Pezzo surfaces $X$ of degree $1$ one has $\rho(X) = 1$.

As in the case of del Pezzo surfaces of degree $2$ one can define the \textit{Bertini twist}, since each del Pezzo surface~$X$ of degree $1$ has an involution defined by the double cover of~$\Pro_{\F_q}(1 : 1 : 2)$ given by the linear system $|-2K_X|$. But the Bertini twist $X_2$ of a del Pezzo surface $X_1$ of degree $1$ such that $\rho(X_1) = 1$ can have $\rho(X_2) = 1$. Actually, for five pairs of types of del Pezzo surfaces $X_1$ and $X_2$ of degree $1$ one has $\rho(X_1) = \rho(X_2) = 1$. Moreover, there are seven types of del Pezzo surfaces $X$ of degree $1$ with $\rho(X) = 1$, such that the Bertini twist of~$X$ has the same type as $X$.

For nonminimal del Pezzo surfaces of degree $1$ the methods that works for del Pezzo surfaces of degree at least $2$ does not work. To apply Theorem \ref{GenPos} one has to check that eight geometric points of the blowup do not lie on a cubic curve having a singularity at one of these points. Also to apply Corollary \ref{dPblowup} to del Pezzo surfaces $X$ of degree $2$ one has to consider an additional condition, that the point of the blowup do not lie on the ramification divisor of the anticanonical map $X \rightarrow \Pro^2_{\F_q}$. For even $q$ there arise additional difficulties (see \cite[Lemma 4.1]{BFL16}). Moreover, on del Pezzo surfaces of degree $2$ a pair of lines permutted by the Geiser involution can have two or one common geometric points, and there can be four lines meeting each other either in six distinct geometric points, or in one common geometric point that is called a \textit{generalised Eckardt point}. Therefore for del Pezzo surfaces of degree $2$ formulas for calculating $\F_q$-points lying on the lines are much more complicated than the formulas given in Remark \ref{Linespoints}.

To construct some types of del Pezzo surfaces of degree $1$ that admit a structure of a minimal conic bundle one can apply \cite[Theorem 2.11]{Ry05}. But a constructed minimal surface admitting a structure of a conic bundle is not in general a del Pezzo surface, as in the cases of degrees $4$ and $2$ (see the proof of \cite[Theorem 3.2]{Ry05} and \cite[Proposition~2.3]{Tr17} respectively). 


Despite these problems some types of del Pezzo surfaces of degree $1$ are constructed in \cite[Section 5]{BFL16} and \cite[Lemma 3.11]{Tr17}.

A generalization of the results of this paper to the case of del Pezzo surfaces of degree~$1$ is a great challenge.

\bibliographystyle{alpha}


\begin{thebibliography}{XX}

\bibitem[BFL16]{BFL16}
B.\,Banwait, F.\,Fit\'{e}, D.\,Loughran,
\newblock Del Pezzo surfaces over finite fields and their Frobenius traces,
\newblock preprint, see http://arxiv.org/abs/1606.00300

\bibitem[Car72]{Car72} 
R.\,W.\,Carter,
\newblock Conjugacy classes in the weyl group,
\newblock Compositio Mathematica, Vol. 25, Fasc. 1, 1972, 1--59

\bibitem[DI09]{DI09}
I.\,V.\,Dolgachev, V.\,A.\,Iskovskikh,
\newblock Finite subgroups of the plane Cremona group,
\newblock In: Algebra, arithmetic, and geometry, vol. I: In Honor of Yu.\,I.\,Manin, Progr. Math., 269, 443--548, Birkh\"auser, Basel, 2009

\bibitem[DD17]{DD17}
I.\,Dolgachev, A.\,Duncan,
\newblock Automorphisms of cubic surfaces in positive characteristic,
\newblock preprint, see http://arxiv.org/abs/1712.01167

\bibitem[Fr51]{Fr51}
J.\,S.\,Frame,
\newblock The classes and representations of the groups of $27$ lines and $28$ bitangents,
\newblock Annali di Matematica Pura ed Appl. Ser. IV, 32, 1951, 83--119

\bibitem[Fr67]{Fr67}
J.\,S.\,Frame,
\newblock The characters of the Weyl group $E_8$,
\newblock in: Computational Problems in Abstract Algebra, ed. J.\,Leech, Oxford, 1967, 111--130

\bibitem[Isk79]{Isk79}
V.\,A.\,Iskovskikh,
\newblock Minimal models of rational surfaces over arbitrary field,
\newblock Math. USSR Izv., 1979, 43, 19--43 (in Russian)


\bibitem[Kap13]{Kap13}
N.\,Kaplan,
\newblock Rational point counts for del Pezzo surfaces over finite fields and coding theory,
\newblock Harvard Ph.D. Thesis.

\bibitem[KR16]{KR16}
A.\,Knecht, K.\,Reyes,
\newblock Full Degree Two del Pezzo Surfaces over Small Finite Fields,
\newblock In: Contemporary Developments in Finite Fields and Applications, 2016, 145--159

\bibitem[Man74]{Man74}
Yu.\,I.\,Manin,
\newblock Cubic forms: algebra, geometry, arithmetic,
\newblock In: North-Holland Mathematical Library, Vol. 4, North-Holland Publishing Co., Amsterdam-London; American Elsevier Publishing Co., New York, 1974


\bibitem[RT17]{RT17}
S.\,Rybakov, A.\,Trepalin,
\newblock Minimal cubic surfaces over finite fields,
\newblock Mat. Sb., 208:9, 148--170, 2017. Engl. transl.: Sb. Math., 208:9 (2017)

\bibitem[Ry05]{Ry05}
S.\,Rybakov,
\newblock Zeta-functions of conic bundles and del Pezzo surfaces of degree 4 over finite fields,
\newblock Moscow Math. Journal volume 5:4, 2005, 919--926

\bibitem[SD67]{SD67}
H.\,P.\,F.\,Swinnerton-Dyer,
\newblock The zeta function of a cubic surface over a finite field,
\newblock Proceedings of the Cambridge Philosophical Society, 63, 1967, 55--71 

\bibitem[SD10]{SD10}
H.\,P.\,F.\,Swinnerton-Dyer,
\newblock Cubic surfaces over finite fields,
\newblock Math. Proceedings of the Cambridge Philosophical Society, 2010, 149, 385--388.

\bibitem[Tr17]{Tr17}
A.\,Trepalin,
\newblock Minimal del Pezzo surfaces of degree $2$ over finite fields,
\newblock Bull. Korean Math. Soc., 54:5, 1779--1801, 2017

\bibitem[Ur96]{Ur96}
T.\,Urabe,
\newblock Calculation of Manin's invariant for del Pezzo surfaces,
\newblock Math. Comp., 65:213, 247--258, S15--S23, 1996

\bibitem[VNT18]{VNT18}
S.\,Vl\v{a}du\c{t}, D.\,Nogin, M.\,Tsfasman,
\newblock Varieties over finite fields: quantitative theory,
\newblock to appear in Uspekhi Mat. Nauk

\end{thebibliography}
\end{document}